\documentclass[preprint,12pt]{elsarticle}

\usepackage{lineno,hyperref}
\modulolinenumbers[5]

\usepackage[utf8]{inputenc}
\usepackage[utf8]{inputenc}
\usepackage[margin=1in]{geometry}
\usepackage{amsmath,amssymb,amsthm,graphicx,amsxtra, setspace}
\usepackage[utf8]{inputenc}
\usepackage{mathrsfs}
\usepackage{hyperref}
\usepackage{upgreek}
\usepackage{mathtools}
\usepackage{xcolor}
\usepackage{todonotes}
\allowdisplaybreaks

\newtheorem{theorem}{Theorem}[section]
\newtheorem{lemma}[theorem]{Lemma}
\newtheorem{proposition}[theorem]{Proposition}

\newtheorem{corollary}[theorem]{Corollary}
\newtheorem{definition}[theorem]{Definition}

\newtheorem{remark}[theorem]{Remark}

\journal{Journal of Mathematical Analysis and Applications \quad }

\begin{document}

\begin{frontmatter}



\title{Convergences for a Virus-like Evolving Population driven by Mutually-exciting Hawkes Processes} 


\author{Rahul Roy$^a$\footnote{Email: rahul.roy@iiitd.ac.in}} 

\affiliation{organization={Department of Mathematics, Indraprastha Institute of Information Technology},
            addressline={Shyam Nagar, Okhla Industrial Estate}, 
            city={New Delhi},
            postcode={110020}, 
            country={India}}

\author{Dharmaraja Selvamuthu$^b$\footnote{Email: dharmar@maths.iitd.ac.in}} 
\affiliation{organization={Department of Mathematics, IIT-Delhi, Indian Institute of Technology Delhi},
            addressline={Hauz Khas}, 
            city={New Delhi},
            postcode={110016}, 
            country={India}}

\author{Paola Tardelli$^c$\footnote{Email: paola.tardelli@univaq.it, CORRESPONDING AUTHOR}} 
\affiliation{organization={DIIIE, Department of Industrial and Information Engineering and Economics, University of L'Aquila},
            addressline={Piazzale E. Pontieri, 1}, 
            city={Roio},
            postcode={67100}, 
            country={Italy}}

\begin{abstract}
This paper presents a stochastic model motivated by the study of a virus-like evolving population with different mutation rates. 
This is a continuous time birth-death model: the birth processes are mutually-exciting  Hawkes processes and the death process is also a Hawkes process. 
This structure for the births and the deaths does not allow, in general, to get the Markov property of the processes involved. 
But considering the couple given by the Hawkes processes and their intensities we are able to deduce the necessary and sufficient conditions for the Markov property of the couple. 
This property is the main tool to get the convergence results describing the behaviour of the population, and the existence of a phase transition at a critical fitness level.
\end{abstract}



\begin{keyword}
Hawkes Process \sep Birth-Death Process \sep Dynamic Contagion Processes \sep Convergences \sep Virus Population

\MSC 60G55, 60J80, 60G52, 60K25, 60K30, 60J76
\end{keyword}

\end{frontmatter}



\section{Introduction}
\label{intro}

Inspired by the discrete time evolution model studied in 
\citet{RoyTanemura}, 
in the present paper, we develop a continuous time stochastic  model for the evolution and survival of species subject to death and  birth 
of individuals belonging to different mutation levels. 
The model initially introduced by 
\citet{GMS2011} consists of either a birth or a death taking place independently at each discrete time point with probability $p$ or $1-p$, respectively. Each individual at birth is assigned a fitness which is chosen uniformly at random from $[0,1]$ and is independent of all other processes. When a death occurs the individual with the smallest fitness is eliminated, and in case there is no individual present in the population then a death does not affect the system.  
In the modification of this model, 
\citet{BS2016} 
 and 
 \citet{RoyTanemura} assume that
\begin{itemize}
\item[(a)] a birth or a death takes place at a discrete time point with probability $p$ or $1-p$, respectively; at birth an individual is either a mutant with probability $r$ or an individual of one of the existing subspecies with probability $1-r$.
\item[(b)] In case the individual at birth is a mutant, then it is assigned a value, chosen uniformly at random from $[0,1]$, corresponding to the so-called fitness level.
\item[(c)] In case the individual belongs to one of the existing subspecies, 
in the 
\citet{BS2016} 
 model, the individual is assigned a fitness uniformly chosen from the existing fitness levels, 
while, in the 
\citet{RoyTanemura} model, the individual is assigned a fitness $f$ with a probability proportional to the number of individuals with fitness $f$ present in the  population.
\end{itemize}

Note that, even if, in several papers, the authors assume that 
the births and deaths are driven by Poisson processes, in many real-world problems, these assumptions are not valid. Often,
there are situations in which the intensity function for the births cannot be considered as a constant and, moreover,  
the rate of new birth events increases with each event occurs, and 
also new death  excites the further death process, see 
\citet{EMW}, 
\citet{DP}, 
\citet{KSBM}. 
Hawkes processes have been studied extensively in connection with modelling earthquakes, societal aberrations and social media trend.
In these problems,  
the future evolution of the models is influenced by the past events. 
The aforementioned empirical observations  motivate the development of models in the financial market, capable of incorporating some kind of contagion effect, see 
\citet{ait2015}.
Similarly, 
many authors have proposed various stochastic models using Hawkes processes for the temporal growth and migration of COVID-19 to incorporate contagion effect in the disease infection (see  
\citet{GLT2021} and references therein). 

This is the reason why, in the present paper, to incorporate the Darwinian theory of the survival of the fittest, we assume that the births occur according to mutually-exciting  Hawkes processes 
while the deaths occur according to a self-exciting  Hawkes process. 
Unlike the Poisson process, a Hawkes process is non-Markovian, in general, because the arrival of a particle excites the process such that  the chances of the next arrival increases for some time period subsequent to the arrival. 
An important property of Hawkes process, the self-exciting property, is well known for its efficiency in reproducing jump clusters due to the dependence of the conditional birth rate, or intensity, on the counting process itself. 
These processes have also recently been used in problems related to financial markets, 
for instance, 
\citet{ELL2011} discusses the multivariate Hawkes process and their applications to financial data. 



The contribution of this work is twofold: on one hand, the necessary and sufficient conditions are derived for the Markov property of the couple given by general mutually-exciting Hawkes processes and their intensities. These conditions apply, in particular, to the continuous time model  given by Hawkes births, with mutant and non-mutant individuals, respectively, $N^1$ and $N^2$, and  Hawkes deaths $N^3$, and their intensities. 
The specificity of the Hawkes processes $N^1$, $N^2$,  and $N^3$, and the queue process $N= N^1+N^2-N^3$ is the dependence of the intensity, on the entire size of the population, $N$, and not only on the size of $N^3$. 
On the other hand, some convergence results for the size of the population are reached, 
identifying the sites where the population is concentrated.

\section{The population model} 
\label{model}

We consider a population of individuals having different fitness levels: an individual may have a fitness which already exists in the population, or it is a mutant. 
The fitness level for each individual is a parameter  $f$, taking values uniformly in $[0, 1]$ and independently of other individuals. 
When the death process rings, an individual who has its fitness closest to the value $0$ dies.
 
This population dynamics is described by three stochastic processes: $N^1$ counting the births of new mutants, $N^2$ counting the births of non-mutants and $N^3$ counting the deaths of individuals. 
Each process $N^i$, $i=1,2,3$, is characterized by its own sequence of jump times, 
we consider $\{\tau_k\}_{k \geq 0}$ the sequence of jump times obtained by merging these three sequences of jump times.
By construction, we assume that initially the population is empty, therefore, 
\begin{itemize}
\item 
$\tau_0 = 0$, there is no individual at first.
\item 
$\tau_1$  is a jump time of the process $N^1$, since there can be neither a birth of a non-mutant, having a pre-existing fitness level  in the population, nor a death.
\item 
The following jump times can be a jump time from any process. The dynamics is given as below:
\begin{itemize}
\item 
If $\tau_k$ is a jump time of $N^1$, this means that a new mutant is born, and its fitness level is chosen uniformly at random in $[0, 1]$.
\item If $\tau_k$ is a jump time of $N^2$, this means that a non-mutant is born, and its fitness level is chosen from the set of all fitness levels already existing in the living population, with a probability proportional to the number of individuals with this fitness level.
\item
If $\tau_k$  is a jump time of $N^3$, the smallest fitness level is chosen so that an individual with this fitness level dies.
\end{itemize}
\end{itemize}

Inspired by the previous considerations, this article adopts that the dynamics of the birth processes are  
mutually-exciting Hawkes processes, while the death process is governed by a self-exciting Hawkes process. 
The uncertainty is assumed to be describe by 
$(\Omega, \mathcal{F},  \{\mathcal{F}_{t}\}_{t\geq0} ,P)$, a filtered probability space, 
where the filtration $\mathcal{F}_t$
satisfies the usual conditions of completeness and right continuity, and on which all the 
stochastic processes are defined.

\begin{definition}
\label{Hawkes-proc}
A {\it self-exciting Hawkes process} is a point process 
$H:=\{H(t): t \geq 0\}$ on $[0, \infty)$ 
and  ${\cal F}^H_t : = \sigma\left\{ H(s) : s \leq t \right\}$ is the filtration generated by $H$. 
In the linear case the intensity of  $H$ is given by
$\lambda(t) = \lambda_0 + \int_0^t \varphi(t-u) dH(u)$, 
where $\lambda_0$ is a positive real constant called background intensity.
The excitation function, $\varphi:(0,\infty) \to [0, \infty)$, is  $L^1$-integrable, 
and, to avoid the explosion of the process, must be $\| \varphi \|_{L^1} < 1$. 
Note that, when $\varphi (s) = 0$ for all $s$, the Hawkes process reduces to a  homogeneous Poisson process with parameter $\lambda_0$.
\end{definition}

\begin{definition}
The processes $N^1:= \{N^1(t): t \geq 0\}$ and $N^2:=\{N^2(t): t \geq 0\}$, are {\it mutually-exciting Hawkes processes} 
if, given  the background intensities $\lambda^1_0$ and $\lambda^2_0$, and the mutually excitation functions, $\varphi_{ji}$, $i, j \in\{1, 2\}$,
their intensities at time $t$ are given  by 
\begin{equation}
\label{generallambdai}
\lambda^i(t) = \lambda^i_0 + \sum_{j=1,2} \int_0^t \varphi_{ji}(t-u) dN^j(u), \qquad \text{ for } i = 1,2.
\end{equation}
\end{definition}

\begin{definition}
Let $N^3:=\{N^3(t): t \geq 0\}$ be another Hawkes process, independent of $N^1$ and $N^2$, 
with  background intensity $\lambda^3_0$ and excitation function $\psi$,  its intensity is given by
\begin{equation}
\label{lambda3}
\lambda^3(t) =  \lambda^3_0 + \int_0^t \psi(t-u) d N^3(u).
\end{equation} 
\end{definition}

\begin{remark}
We are considering a model which is a simple queueing process $N(t)$  adapted to $\mathcal{F}_t$ and with input process $N^1$ and $N^2$ and output driven by $N^3$. 
Hence, at time $t$, the size of the population is
\begin{equation}
\label{cardinality}
N(t):= N^1(t)+N^2(t)-N^3(t),
\end{equation} 
and only if $N(t)$ is positive there is the possibility of a death at time $t$. 
Therefore, 
\citet{Bremaud1981} (Chapter II Section 3), $N^3$ also admits the $\mathcal{F}_t$-intensity  $\lambda^3(t) {\bf 1}_{N(t-) > 0}$. 
By the linearity of the mean value and if $N^1$, $N^2$ and $N^3$ have the $\mathcal{F}_t$-intensities  $\lambda^1(t)$, $\lambda^2(t)$ 
and $\lambda^3(t) {\bf 1}_{N(t-) > 0}$, respectively, we are able to assert that $N$ has $\mathcal{F}_t$-parameters $\lambda^1(t)$, $\lambda^2(t)$ and 
$\lambda^3(t)$, when $\lambda^1(t)$ and $\lambda^2(t)$ are related to the birth intensities and $\lambda^3(t)$ is related to the death intensity.
\end{remark}


In  Section \ref{expNexplambda} we derive various properties of the Hawkes processes, like the mean values of all the intensities, the expected number of births and the expected number of deaths. 
Section \ref{representation} and Section \ref{MarkovSection}, are devoted to prove the Markov property of the couple given by the processes and their intensities. Section \ref{stability} investigates the stability of the model by studying the expectations of the intensities. Finally, Section \ref{Rahul-model} describes in details the model given by a virus-like evolving population and in Section \ref{model-convergences} there are some convergence results for the behaviour of the population, identifying the region where the population is concentrated. Specifically in Theorem \ref{T1-Rahul} and Corollary \ref{Cor2-Rahul} we prove that there exists a phase transition at a critical fitness level.

\section{Expected number of births and deaths and expected intensities}
\label{expNexplambda}

From all the assumptions made so far on 
(\ref{generallambdai}), 
(\ref{lambda3}) and 
(\ref{cardinality}), 
taking into account that  $\{\tau_i\}_{i \in \mathbb{N} \cup \{0\}}$ is the sequence of jump times of the process $(N^1, N^2, N^3)$, and 
setting as usual $\Delta N^j(t) = N^j(t) - N^j(t-)$, for $j=1,2,3$,  
the intensities admit the representations 
$$\lambda^i(t) = \lambda^i_0 + \sum_{j=1,2} \sum_{k=1}^\infty \varphi_{ji}(t-\tau_k) \Delta N^j(\tau_k) {\bf 1}_{\tau_k \leq t},  
\quad \mbox{for} \ i=1,2,$$
and 
$$\lambda^3(t) = \lambda^3_0 {\bf 1}_{N(t-) > 0}
+ \sum_{k=1}^\infty \psi(t - \tau_k) \Delta N^3(\tau_k) {\bf 1}_{\tau_k \leq t}
{\bf 1}_{N(t-) > 0}.$$ 

Note that, for $j=1,2,3$, $\Delta N^j(\tau_n)$ is not null only if $N^j$ has a jump in $\tau_n$. In this case, 
the intensity has an increment due to this jump. 
For instance, if $N^1$ has a jump at time $\tau_8$, 
then $\Delta N^1(\tau_8) = 1$ and $\Delta N^2(\tau_8) = 0$, which implies that
$\sum_{j=1,2} \varphi_{ji}(t-\tau_8) \Delta N^j(\tau_8) {\bf 1}_{\tau_8 \leq t} 
= \varphi_{1i}(t-\tau_8) {\bf 1}_{\tau_8 \leq t}.$
\begin{remark}
At  $t$, set 
$h := N^1(t) + N^2(t) + N^3(t)$, 
which is the number of jump times of 
$\left(N^1(t), N^2(t), N^3(t)\right)$.
Therefore,  the intensities 
for $\tau_h \leq t < \tau_{h+1}$ are given by
\begin{eqnarray}
\label{lambdaBbetween}
\lambda^i(t) &=& \lambda^i_0 + \sum_{j=1,2} \sum_{k=1}^h \varphi_{ji}(t-\tau_k) \Delta N^j(\tau_k), 
\qquad i=1,2,
\\
\label{lambdaDbetween}
\lambda^3(t) &=& \lambda^3_0 {\bf 1}_{N(\tau_h) > 0}
+ \sum_{k=1}^h \psi(t - \tau_k) \Delta N^3(\tau_k)
{\bf 1}_{N(\tau_h) > 0}.
\end{eqnarray}
Thus, at the jump time $\tau_h$ the intensities  are such that
\begin{eqnarray}
\label{lambdaBjump}
\lambda^i(\tau_h) - \lambda^i(\tau_h-) &=& \sum_{j=1,2} \varphi_{ji}(0) \Delta N^j(\tau_h), 
\qquad  i=1,2,
\\
\label{lambda3jump}
\lambda^3(\tau_h) - \lambda^3(\tau_h-) &=&  \psi(0) \Delta N^3(\tau_h).
\end{eqnarray}
Note that the sudden increase in $\tau_h$ implies a higher probability of having a jump of the same kind  immediately after $\tau_h$.
\end{remark}
The Hawkes processes are particular cases of non-homogeneous point processes. 
Their intensities are stochastic, since explicitly depend on the occurrence of previous events,
followed by a progressive normalisation, driven by the memory kernels.

\begin{proposition}
\label{systemODEBlambda}
For $i=1,2,3$, by 
(\ref{generallambdai}) and (\ref{lambda3}),  
$\left(\lambda^i(t), N^i(t) \right)$
are such that 
the expected intensities $\mathbb{E} [\lambda^i(t)]$ 
satisfy the differential equations:
\begin{eqnarray}
\label{ODExLambdaB}
\qquad d \mathbb{E} [ \lambda^i(t) ] 
&=& \sum_{j=1,2} \left( \int_0^t \varphi'_{ji}(t-u) \mathbb{E} [ \lambda^j(u)] du
+  \varphi_{ji}(0)  \mathbb{E} [ \lambda^j(t)] \right) dt, 
\qquad i=1,2
\\
\label{ODExLambdaD}
d \mathbb{E} [ \lambda^3(t) ] 
&=& \int_0^t \psi'(t-u) \mathbb{E} \left[ \lambda^3(u) \right] du \ dt 
+ \psi(0)  \mathbb{E} \left[ \lambda^3(t) \right] dt.
\end{eqnarray}
\end{proposition}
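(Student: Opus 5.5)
The plan is to take expectations in the integral representations (\ref{generallambdai}) and (\ref{lambda3}) using the compensators of the counting processes, and then differentiate in $t$. First, I would use that $N^j(t)-\int_0^t\lambda^j(u)\,du$ is an $\mathcal{F}_t$-martingale for $j=1,2$, and $N^3(t)-\int_0^t\lambda^3(u)\,du$ is one as well, in the sense of the intensity of $N^3$ fixed by (\ref{lambda3}). For a deterministic, locally bounded kernel $\varphi_{ji}$, the integrand $u\mapsto\varphi_{ji}(t-u)$ is predictable. Hence
$$\mathbb{E}\Big[\int_0^t \varphi_{ji}(t-u)\,dN^j(u)\Big]=\int_0^t \varphi_{ji}(t-u)\,\mathbb{E}[\lambda^j(u)]\,du .$$
To justify this I would localize with stopping times and pass to the limit by monotone convergence, since everything is nonnegative. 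This gives the renewal-type identities
$$\mathbb{E}[\lambda^i(t)]=\lambda^i_0+\sum_{j=1,2}\int_0^t\varphi_{ji}(t-u)\,\mathbb{E}[\lambda^j(u)]\,du,\qquad \mathbb{E}[\lambda^3(t)]=\lambda^3_0+\int_0^t\psi(t-u)\,\mathbb{E}[\lambda^3(u)]\,du .$$
Finiteness of the left-hand sides follows from $\|\varphi\|_{L^1}<1$ by a Gronwall/renewal argument. Alternatively, one can simply assume $m_i(t):=\mathbb{E}[\lambda^i(t)]$ is locally bounded, as is implicit in the statement.

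Second, I would differentiate these Volterra equations in $t$ by the Leibniz rule. This assumes, as the statement implicitly does by writing $\varphi'_{ji}$ and $\psi'$, that the kernels are differentiable with locally integrable derivatives. For $F(t)=\int_0^t\varphi_{ji}(t-u)m_j(u)\,du$ we get
$$F'(t)=\varphi_{ji}(0)\,m_j(t)+\int_0^t\varphi'_{ji}(t-u)\,m_j(u)\,du .$$
Summing over $j$ yields (\ref{ODExLambdaB}), and the same computation with $\psi$ yields (\ref{ODExLambdaD}). A cleaner route avoids pointwise differentiability of $m_j$. Write $\varphi_{ji}(t-u)=\varphi_{ji}(0)+\int_u^t\varphi'_{ji}(s-u)\,ds$, substitute into the integral identity, and apply Fubini. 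This produces
$$m_i(t)=\lambda^i_0+\sum_j\int_0^t\Big(\varphi_{ji}(0)m_j(s)+\int_0^s\varphi'_{ji}(s-u)m_j(u)\,du\Big)ds,$$
which is exactly the integrated form of the claimed differential equations.

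The main obstacle is the first step, namely justifying the interchange of expectation with the stochastic integral and the finiteness of $\mathbb{E}[\lambda^i]$. Here I would rely on nonnegativity of the kernels and the stability condition $\|\varphi\|_{L^1}<1$, applied componentwise to the matrix of kernels. A secondary point concerns $N^3$. The Remark notes that $N^3$ also admits the intensity $\lambda^3(t)\mathbf{1}_{N(t-)>0}$. However, the statement is about $\lambda^3$ as defined in (\ref{lambda3}), where $N^3$ is a self-exciting Hawkes process independent of the births. So I would use its own Hawkes intensity as the compensator density, which gives the unindicated equation (\ref{ODExLambdaD}).
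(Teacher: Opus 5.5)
Your proof is correct, but it runs in the opposite order from the paper's. The paper first computes the pathwise dynamics of the intensity. Between jumps it has $d\lambda^i=\sum_j\int_0^t\varphi'_{ji}(t-u)\,dN^j(u)\,dt$, and at jumps $d\lambda^i=\sum_j\varphi_{ji}(0)\,dN^j$. It then takes expectations, replacing $dN^j$ by $\lambda^j\,dt$ through the defining property of the intensity, and appeals to dominated convergence to swap $\mathbb{E}$ and $d$.

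You instead take expectations first, which gives the Volterra (renewal) equation $m_i=\lambda^i_0+\sum_j\varphi_{ji}*m_j$. The rest is a deterministic computation: either Leibniz, or, better, writing $\varphi_{ji}(t-u)=\varphi_{ji}(0)+\int_u^t\varphi'_{ji}(s-u)\,ds$ and applying Fubini.

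Your route is the more rigorous of the two. You apply the intensity identity only to the nonnegative deterministic integrand $\varphi_{ji}(t-\cdot)$. The paper applies it to $\varphi'_{ji}(t-\cdot)$, which is typically signed (e.g. $-\alpha\beta e^{-\beta t}$), so it needs a splitting and integrability argument that the paper does not spell out. Your Fubini version also gives exactly the integrated meaning of the differential statement, under only absolute continuity of the kernels and local boundedness of $m_j$.

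Your intermediate renewal equation is also what the paper effectively reconstructs later by Laplace transforms in the proofs of Theorems \ref{EBElambda} and \ref{Exlambda3}, and states directly in the exponential-kernel remark. What the paper's route buys is the pathwise decomposition of $d\lambda^i$ itself, which is reused in the Markov setting (Proposition \ref{differentiable}).

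One small correction to your finiteness step. Local boundedness of $m_i$ needs only local integrability of the kernels together with your localization at jump times; $\|\varphi\|_{L^1}<1$ is not what drives it. That condition concerns long-run behaviour, and in the mutually exciting case the relevant condition would be on the spectral radius of the matrix of kernel norms, not a componentwise bound.

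Your handling of $N^3$, using the Hawkes intensity from (\ref{lambda3}) without the indicator, is exactly what the paper's proof does implicitly.
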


\begin{proof} 
For  $n > 0$, at $t = \tau_n$, a jump time of the process 
$(N^1, N^2, N^3)$, the intensities $\lambda^1$, $\lambda^2$, and $\lambda^3$ have a jump given by 
(\ref{lambdaBjump}) and (\ref{lambda3jump}), respectively, hence,
$$d \lambda^i(t) 
=  \sum_{j=1,2}  \varphi_{ji}(0)  d N^j(t), \quad \mbox{for} \ i =1,2,$$
and 
$$d \lambda^3(t) 
=   \psi(0)  d N^3(t).$$
In between the jump times, 
for $\tau_{n-1} \leq t < \tau_n$, given 
(\ref{lambdaBbetween}) and (\ref{lambdaDbetween}), we have that
$$d \lambda^i(t) = \sum_{j=1,2} \int_0^t \varphi'_{ji}(t-u) dN^j(u) \ dt,$$ 
and 
$$d \lambda^3(t) 
= \int_0^t \psi'(t-u) dN^3(u) \ dt.$$
This means that, for all $t$, 
$$d \lambda^i(t) = \sum_{j=1,2} \left\{  \int_0^t \varphi'_{ji}(t-u) dN^j(u) \ dt + \varphi_{ji}(0)  d N^j(t) \right\},$$ 
for $i=1,2$, and 
$$d \lambda^3(t) =  \int_0^t \psi'(t-u) dN^3(u) \ dt + \psi(0)  d N^3(t).$$

Recall that, (see the definition of intensity given by 
\citet{Bremaud1981}, Chapter II Section 3), the counting processes $N^i$ admits intensity $\lambda^i(t)$, if, for all non negative predictable process $C(t)$, the equality 
$\mathbb{E} [\int_0^\infty C(s) dN^i(s)] = \mathbb{E}[\int_0^\infty C(s) \lambda^i(s) ds]$ 
holds. 
Hence, by the Dominated Convergence Theorem, 
$\mathbb{E} [d \lambda^i(t)] = d \mathbb{E} [\lambda^i(t)]$, and 
$$d \mathbb{E} [\lambda^i(t)] = \sum_{j=1,2} 
    \left(  \int_0^t \varphi'_{ji}(t-u) \mathbb{E} [ \lambda^j(u) ]du \ dt 
    + \varphi_{ji}(0)  \mathbb{E} [ \lambda^j(t) ] dt \right),$$    
for $i=1,2$, and an analogous procedure allows us to deduce (\ref{ODExLambdaD}).
\end{proof}

\begin{theorem}
\label{EBElambda} 
Let 
$\left(\lambda^i(t), N^i(t)\right), i=1,2$, 
be Hawkes processes with dynamics of the intensities  given by 
(\ref{generallambdai}).  
Solving 
(\ref{ODExLambdaB}), 
for $i = 1, 2$, $j = 1, 2$ and $i \neq j$, we get
\begin{eqnarray}
\mathbb{E} [ \lambda^i(t) ] &=&  \lambda^i_0  \int_0^t \int_0^s
\left[ \varphi_{ij}(s-u) \varphi_{ji}(u)
-\varphi_{ii}(s-u) 
\varphi_{jj}(u) 
\right] \ du  \ ds
\nonumber\\
&&+ \lambda^i_0 
+ \lambda^i_0  \int_0^t \varphi_{ii}(s) \ ds 
+ \lambda^j_0  \int_0^t \varphi_{ji}(s) \ ds.
\nonumber
\end{eqnarray}
Furthermore, taking into account that $\mathbb{E} [ N^i(t)] = \int_0^t \mathbb{E} [ \lambda^i(v) ] \ dv$
\begin{eqnarray}
\mathbb{E} [ N^i(t)] 
&=& \lambda^i_0 \int_0^t \int_0^v \int_0^s
\left[ \varphi_{ij}(s-u) \varphi_{ji}(u)
-\varphi_{ii}(s-u) 
\varphi_{jj}(u) 
\right] \ du  \ ds \ dv
\nonumber\\
&&
+ \lambda^i_0 t
+ \lambda^i_0  \int_0^t \int_0^v \varphi_{ii}(s) \ ds \ dv
+ \lambda^j_0  \int_0^t \int_0^v \varphi_{ji}(s) \ ds \ dv.
\nonumber
\end{eqnarray}
\end{theorem}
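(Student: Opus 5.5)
The plan is to turn the differential system (\ref{ODExLambdaB}) of Proposition \ref{systemODEBlambda} into a linear Volterra system for the functions $m_i(t):=\mathbb{E}[\lambda^i(t)]$, $i=1,2$, and then solve that system explicitly for one component at a time. First, integrate (\ref{ODExLambdaB}) from $0$ to $t$, using the initial condition $m_i(0)=\lambda^i_0$. In the double integral $\int_0^t\int_0^s \varphi'_{ji}(s-u)\,m_j(u)\,du\,ds$ I would exchange the order of integration by Fubini. The inner integral in $s$ is then $\varphi_{ji}(t-u)-\varphi_{ji}(0)$, and the resulting $-\varphi_{ji}(0)\int_0^t m_j$ cancels exactly the term $\varphi_{ji}(0)\int_0^t m_j(s)\,ds$. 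This gives the renewal-type system
\begin{equation*}
m_i(t)=\lambda^i_0+\sum_{j=1,2}\int_0^t \varphi_{ji}(t-u)\,m_j(u)\,du,\qquad i=1,2.
\end{equation*}
The same system also follows by taking expectations directly in (\ref{generallambdai}) and using the defining property of the intensity recalled in the proof of Proposition \ref{systemODEBlambda}, which I would use as a cross-check.

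Second, fix $i$ and let $j\neq i$. The aim is to express $m_i$ through the data $\lambda^i_0,\lambda^j_0$ and the kernels only, which means eliminating $m_j$ from the equation for $m_i$. I would work with the Laplace transforms $\hat m_i,\hat\varphi_{ji}$, under which the system becomes the $2\times 2$ linear algebraic system
\begin{equation*}
(1-\hat\varphi_{ii})\hat m_i-\hat\varphi_{ji}\hat m_j=\lambda^i_0/s,
\end{equation*}
together with its counterpart for $j$. Its determinant $(1-\hat\varphi_{ii})(1-\hat\varphi_{jj})-\hat\varphi_{ij}\hat\varphi_{ji}$ is where the combination $\varphi_{ij}*\varphi_{ji}-\varphi_{ii}*\varphi_{jj}$ of the statement comes from. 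Equivalently, in the time domain I would substitute the equation for $m_j$ into the $\varphi_{ji}$-term of the equation for $m_i$ and collect terms. The terms then organise as follows:
\begin{itemize}
\item $\lambda^i_0$ is the constant term;
\item $\lambda^i_0\int_0^t\varphi_{ii}$ and $\lambda^j_0\int_0^t\varphi_{ji}$ are the first-order terms, obtained by inserting the constant backgrounds into the convolutions;
\item the iterated convolutions $\int_0^t\int_0^s[\varphi_{ij}(s-u)\varphi_{ji}(u)-\varphi_{ii}(s-u)\varphi_{jj}(u)]\,du\,ds$ multiplied by $\lambda^i_0$ are the cross terms.
\end{itemize}
Here the identity $\int_0^t (f*g)(s)\,ds=\int_0^t\int_0^s f(s-u)g(u)\,du\,ds$ is used repeatedly.

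Finally, the formula for $\mathbb{E}[N^i(t)]$ follows immediately. Since $N^i$ has intensity $\lambda^i$, taking $C\equiv{\bf 1}_{[0,t]}$ in the characterisation of intensity gives $\mathbb{E}[N^i(t)]=\int_0^t m_i(v)\,dv$. Integrating the expression for $m_i$ term by term then yields the triple integrals displayed.

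The main obstacle is the elimination step. The Volterra system has the exact solution $m=\lambda_0\,(I+R)$, where $R$ is the resolvent, i.e.\ the full Neumann series $\sum_n \Phi^{*n}$. Truncating or closing it to produce only terms up to second order, with exactly the stated signs and without leftover $m_j$ or higher convolution powers, requires a careful bookkeeping argument. I would first try the Laplace-domain computation and invert term by term, checking carefully which higher-order contributions cancel against the determinant. If they do not cancel in general, I would identify the additional structural assumption on the kernels that the statement implicitly uses, or else interpret the statement as the expansion up to second-order convolutions.
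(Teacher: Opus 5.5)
\textbf{There is a genuine gap, and it is at the step you flagged.} Your first step is sound. Integrating (\ref{ODExLambdaB}) and applying Fubini gives the renewal system $m_i=\lambda^i_0+\sum_{j}\varphi_{ji}*m_j$, and your Cramer-rule solution in the Laplace domain is correct.

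The elimination step, however, cannot be completed: the displayed formula does not solve that system, because the higher-order terms do not cancel against the determinant. A one-kernel example shows this. Take $\varphi_{11}(t)=\alpha e^{-\beta t}$ with $0<\alpha<\beta$ and $\varphi_{12}=\varphi_{21}=\varphi_{22}=0$. Then $\hat m_1(s)=\lambda^1_0(s+\beta)/\bigl(s(s+\beta-\alpha)\bigr)$, so
\[
m_1(t)=\frac{\lambda^1_0}{\beta-\alpha}\bigl(\beta-\alpha e^{-(\beta-\alpha)t}\bigr).
\]
This agrees with the paper's own remark on the exponential kernel. The statement instead gives $\lambda^1_0\bigl(1+\frac{\alpha}{\beta}(1-e^{-\beta t})\bigr)$, and the limits as $t\to\infty$ differ by $\lambda^1_0\alpha^2/(\beta(\beta-\alpha))>0$.

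Your fallback reading fails as well. The genuine second-order part of the resolvent series is $\lambda^i_0(\varphi_{ii}*\varphi_{ii}+\varphi_{ij}*\varphi_{ji})+\lambda^j_0(\varphi_{ii}*\varphi_{ji}+\varphi_{jj}*\varphi_{ji})$, integrated over $[0,t]$. Since all kernels are nonnegative, no term $-\lambda^i_0\,\varphi_{ii}*\varphi_{jj}$ can appear at any order. Your list of how ``the terms organise'' is therefore unsupported: substituting the $m_j$-equation into the $m_i$-equation only regenerates convolutions of the unknowns.

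The paper reaches the formula through exactly the step you could not justify. From the same Cramer formula it correctly derives $y^i=K*y^i+g_i$, where $K=\varphi_{ii}+\varphi_{jj}+\varphi_{ij}*\varphi_{ji}-\varphi_{ii}*\varphi_{jj}$ and $g_i(t)=\lambda^i_0-\lambda^i_0\int_0^t\varphi_{jj}+\lambda^j_0\int_0^t\varphi_{ji}$. It then differentiates this as $(y^i)'=\lambda^i_0K+g_i'$. That drops the term $\int_0^t K(v)\,(y^i)'(t-v)\,dv$, which amounts to freezing $y^i$ at its initial value $\lambda^i_0$ inside the convolution. The stated formula is precisely $g_i(t)+\lambda^i_0\int_0^tK(v)\,dv$, the first Picard iterate of the renewal equation. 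It is exact only in degenerate cases such as the Poisson one, and the example above shows it fails in general.

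So do not try to reproduce that step. What you can actually prove is your Laplace-domain formula, equivalently the full resolvent series, together with $\mathbb{E}[N^i(t)]=\int_0^t m_i(v)\,dv$ for that true $m_i$. The triple-integral expression for $\mathbb{E}[N^i(t)]$ in the statement inherits the same error.
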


The proof of Theorem \ref{EBElambda} is in Appendix. 

\begin{remark}
If  $\left(\lambda(t), N(t)\right)$ is  a linear  Hawkes process 
with exponential excitation function, namely, 
$\lambda(t) = \lambda_0 + \int_0^t \alpha e^{-\beta(t-u)} dN(u)$,
with $\alpha, \beta > 0$, 
(\ref{ODExLambdaB}) reduces to 
\begin{equation}
\label{ODExLambdaB-linearH}
d \mathbb{E} [ \lambda(t) ] 
= \left( \alpha  \mathbb{E} [ \lambda_t]  -\beta \int_0^t \alpha e^{-\beta (t-u)} \mathbb{E} [ \lambda(u)] du \right) dt.
\end{equation}
Taking the Laplace Transform on both sides, and setting as usual  
$Y(s) = {\cal L}(\mathbb{E} [ \lambda(t) ] )(s) = \int_0^\infty e^{-s t} \mathbb{E} [ \lambda(t) ]  dt$,
and rearranging, we get
$$Y(s) \left( 1 + (\beta - \alpha) \frac{1}{s} \right) = \lambda_0 \left(  \frac{1}{s}  + \beta  \frac{1}{s^2} \right).$$
Taking the Inverse Laplace Transform and then the derivative of both sides, we have
\begin{equation}
\label{ODExLambdaB-linearH-final}
d \mathbb{E} [ \lambda(t) ] + (\beta - \alpha) \mathbb{E} [ \lambda(t) ]  = \beta \lambda_0.
\end{equation}
Alternatively, noting that,  in this case, 
$\mathbb{E} [\lambda(t) ] = \lambda_0 + \int_0^t \alpha e^{-\beta(t-u)} \mathbb{E} [\lambda(u) ] du$, 
immediately, (\ref{ODExLambdaB-linearH}) reduces to (\ref{ODExLambdaB-linearH-final}). 
Then,  integrating, we get
$\mathbb{E} [ \lambda(t) ] = \frac{\beta \lambda_0}{\alpha - \beta} \left( e^{(\alpha - \beta)t} - 1 \right) 
+ \lambda_0 e^{(\alpha - \beta)t}$, 
and, given $N(0) = N_0$,
$\mathbb{E} [ N(t) ] = N_0 + \frac{\alpha \lambda_0}{(\alpha - \beta)^2} \left( e^{(\alpha - \beta)t} - 1 \right) 
- \frac{\beta \lambda_0}{\alpha - \beta} \ t$.

Moreover, note that Theorem \ref{EBElambda} applies even to the special case of Poisson scenario, 
in which all the excitation functions are null.
\end{remark}

\begin{theorem}
\label{Exlambda3}
Given the 
process 
$\left(\lambda^3(t), N^3(t)\right)$ with the structure of the intensity process  given by 
(\ref{lambda3}), 
and solving 
(\ref{ODExLambdaD}), we have
$\mathbb{E} [ \lambda^3(t) ] = \lambda^3_0 + \lambda^3_0 \int_0^t \psi(v) \ dv$ and 
$\mathbb{E} [ N^3(t) ] = \int_0^t \mathbb{E} [ \lambda^3(v) ] \ dv 
= \lambda^3_0 t 
+ \lambda^3_0 \int_0^t \int_0^s \psi(v) \ dv \ ds.$
\end{theorem}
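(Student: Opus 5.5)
The plan is to obtain Theorem \ref{Exlambda3} as the one-dimensional specialisation of the computation behind Theorem \ref{EBElambda}, starting from Proposition \ref{systemODEBlambda}. We work with the intensity \eqref{lambda3} itself, not with the version carrying the factor ${\bf 1}_{N(t-)>0}$. This is the choice under which \eqref{ODExLambdaD} was derived, since $N^3$ is then a self-exciting Hawkes process in its own right. The independence of $N^3$ from $N^1, N^2$ guarantees that no cross terms enter.

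First, integrate \eqref{ODExLambdaD} over $[0,t]$, using $\mathbb{E}[\lambda^3(0)]=\lambda^3_0$. Apply Fubini to the term $\int_0^t\int_0^s \psi'(s-u)\mathbb{E}[\lambda^3(u)]\,du\,ds$, integrating first in $s\in[u,t]$; this gives $\int_0^t(\psi(t-u)-\psi(0))\mathbb{E}[\lambda^3(u)]\,du$. The $-\psi(0)$ part cancels against the integrated term $\psi(0)\int_0^t\mathbb{E}[\lambda^3(u)]\,du$. What remains is the renewal-type identity
$$\mathbb{E}[\lambda^3(t)]=\lambda^3_0+\int_0^t\psi(t-u)\,\mathbb{E}[\lambda^3(u)]\,du,$$
which is also what one gets directly by taking expectations in \eqref{lambda3} and using the intensity property $\mathbb{E}[dN^3(u)]=\mathbb{E}[\lambda^3(u)]\,du$ recalled in the proof of Proposition \ref{systemODEBlambda}.

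Second, solve this equation following the procedure used in the Appendix for Theorem \ref{EBElambda}. Equivalently, specialise that theorem by setting $\varphi_{ij}=\varphi_{ji}=0$ (no mutual excitation), $\varphi_{jj}=0$ for the dummy component, and $\varphi_{ii}=\psi$, $\lambda^i_0=\lambda^3_0$. With these choices the double-integral term $\int_0^t\int_0^s[\varphi_{ij}\varphi_{ji}-\varphi_{ii}\varphi_{jj}]$ vanishes identically, as does the $\lambda^j_0$ term, leaving $\mathbb{E}[\lambda^3(t)]=\lambda^3_0+\lambda^3_0\int_0^t\psi(v)\,dv$. The formula for $\mathbb{E}[N^3(t)]$ then follows by taking $C\equiv {\bf 1}_{[0,t]}$ in the Brémaud characterisation of the intensity, which gives $\mathbb{E}[N^3(t)]=\int_0^t\mathbb{E}[\lambda^3(v)]\,dv$. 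Substituting the expression just found and integrating yields $\lambda^3_0t+\lambda^3_0\int_0^t\int_0^s\psi(v)\,dv\,ds$.

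The main obstacle is the solution step. Solved exactly, the Volterra equation above has the resolvent solution $\lambda^3_0\big(1+\int_0^t R(v)\,dv\big)$ with $R=\sum_{n\ge1}\psi^{*n}$. Thus the stated closed form coincides with that exact solution precisely when the higher convolution powers $\psi^{*n}$, $n\ge2$, are discarded, i.e.\ it retains only the first-generation (direct) excitations. I would therefore follow exactly the convention adopted in the Appendix for Theorem \ref{EBElambda}, in which $\mathbb{E}[\lambda(u)]$ inside the convolution is replaced by its background level $\lambda_0$. I would state this explicitly as the regime in which the identity holds: to first order in $\|\psi\|_{L^1}<1$, with a remainder bounded by $\lambda^3_0\|\psi\|_{L^1}^2/(1-\|\psi\|_{L^1})$. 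The remaining steps, namely Fubini and dominated convergence, are justified by $\psi\in L^1$, and they also require $\psi$ to be absolutely continuous, which is needed for $\psi'$ in \eqref{ODExLambdaD} to make sense.
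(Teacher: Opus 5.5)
Your reduction to the renewal equation $\mathbb{E}[\lambda^3(t)]=\lambda^3_0+\int_0^t\psi(t-u)\,\mathbb{E}[\lambda^3(u)]\,du$ is correct, and it is the same equation the paper reaches. The paper gets there via the Laplace transform, $Y(s)(1-\mathcal{L}(\psi)(s))=y_0/s$, where you use Fubini. Your resolvent solution $\lambda^3_0\bigl(1+\int_0^t R(v)\,dv\bigr)$ with $R=\sum_{n\ge1}\psi^{*n}$ is also correct, and it settles the matter: the statement is false as an identity in $t$ for every nonnegative $\psi\not\equiv0$. You can check this directly by substituting the claimed $\mathbb{E}[\lambda^3(t)]=\lambda^3_0\bigl(1+\int_0^t\psi(v)\,dv\bigr)$ into (\ref{ODExLambdaD}). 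The right-hand side is $\frac{d}{dt}(\psi*\mathbb{E}[\lambda^3])(t)=\lambda^3_0\psi(t)+\lambda^3_0(\psi*\psi)(t)$, while the left-hand side is $\lambda^3_0\psi(t)$. So the formula does not solve the equation it is claimed to solve. Your proposal therefore does not prove the theorem, and no argument could; the defect lies in the statement, not in a missing idea on your side.

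You should not describe the paper's route as a ``convention'' to adopt, because it is an invalid step. After obtaining $y_t=y_0+\int_0^t y_{t-v}\psi(v)\,dv$, the Appendix differentiates and writes $y_t'=y_0\psi(t)$. Leibniz's rule actually gives $y_t'=y_0\psi(t)+\int_0^t y'_{t-v}\psi(v)\,dv$. By your resolvent formula the discarded term equals $\lambda^3_0\sum_{n\ge2}\psi^{*n}(t)$, which is exactly the higher generations. Specialising Theorem \ref{EBElambda} gives no independent support, since its Appendix proof drops the analogous term $\int_0^t (y^i)'_{t-v}[\cdots]\,dv$ in the same way.

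The paper even contradicts itself. Take $\psi(t)=\alpha e^{-\beta t}$ with $\alpha<\beta$. The exponential-kernel Remark after Theorem \ref{EBElambda} solves (\ref{ODExLambdaB-linearH-final}) exactly and gives $\mathbb{E}[\lambda(t)]\to\lambda_0\beta/(\beta-\alpha)$. Theorem \ref{Exlambda3} instead gives the limit $\lambda_0(1+\alpha/\beta)$, and the two agree only to first order in $\alpha/\beta$.

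The right output is what your last paragraph sketches, presented as a correction rather than a proof:
$\mathbb{E}[\lambda^3(t)]=\lambda^3_0\bigl(1+\int_0^t R(v)\,dv\bigr)$ and $\mathbb{E}[N^3(t)]=\lambda^3_0\bigl(t+\int_0^t\int_0^s R(v)\,dv\,ds\bigr)$. The stated formulas are the first-generation truncation of these. Their error is at most $\lambda^3_0\|\psi\|_{L^1}^2/(1-\|\psi\|_{L^1})$ for the intensity, and $t$ times that for $N^3$.
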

The proof of Theorem \ref{Exlambda3} is in Appendix.

\begin{remark}
\label{general-stability}
Recall that, in queuing theory, \it{stable} describes a system where the number of customers or jobs in the queue remains finite and does not grow indefinitely over time, indicating a balance between arrival and service rates, 
which, in our model, requires that
$$\mathbb{E} [ \lambda^1(t) ] + \mathbb{E} [ \lambda^2(t) ] < \mathbb{E} [ \lambda^3(t) ],$$
see also Remark \ref{widesensestazionarity}  later.
Looking at Theorem \ref{EBElambda} and Theorem \ref{Exlambda3}, 
we get that
\begin{eqnarray}
&&\mathbb{E} [ \lambda^1(t) ] + \mathbb{E} [ \lambda^2(t) ] 
= \lambda^1_0 + \lambda^1_0  \int_0^t 
(\varphi_{11}(s) + \varphi_{12}(s) )  ds
+ \lambda^2_0 + \lambda^2_0  \int_0^t 
(\varphi_{21}(s) + \varphi_{22}(s) )  ds
\nonumber\\
&& \qquad \qquad + (\lambda^1_0 + \lambda^2_0)  
\int_0^t \int_0^s
\left[ \varphi_{12}(s-u) ) \varphi_{21}(u)
-\varphi_{11}(s-u) 
\varphi_{22}(u) 
\right]  du   ds,
\nonumber
\end{eqnarray}
and $\displaystyle \mathbb{E} [ \lambda^3(t) ] = \lambda^3_0 + \lambda^3_0 \int_0^t \psi(v) \ dv.$
Therefore, the stability condition is given by
\begin{eqnarray}
\label{stab-cond}
&&\qquad \lambda^3_0 + \lambda^3_0 \int_0^t \psi(v) \ dv 
>  \lambda^1_0  \int_0^t 
(\varphi_{11}(s) + \varphi_{12}(s) )  ds
+ \lambda^2_0  \int_0^t 
(\varphi_{21}(s) + \varphi_{22}(s) )  ds
\\
&&\qquad 
+ (\lambda^1_0 + \lambda^2_0)  \left( 1 + 
\int_0^t \int_0^s
\left[ \varphi_{12}(s-u)  \varphi_{21}(u)
-\varphi_{11}(s-u) 
\varphi_{22}(u) 
\right] \ du  \ ds \right).
\nonumber
\end{eqnarray}
Note that, when the Hawkes process reduces to a Poisson process, the excitation functions are null, and,
$\mathbb{E} [ \lambda^i(t) ] \equiv \lambda_0^i$. Thus, the stability condition turns to be, as usual, 
$\lambda^3_0 > \lambda^1_0 + \lambda^2_0$.
\end{remark}

\section{Representation for the intensities}
\label{representation}

By 
\citet{EGG2010}, we know that 
the Markov property holds for the couple, 
Hawkes process and its intensity, having  exponential kernel. 
We are going to discuss this approach  investigating in this section and in the next one 
the necessary and sufficient conditions for the  Markov property of our model. 

First of all, due to the presence of the birth processes, the death process and the process counting the number of individuals alive, we cannot be sure that if there is a martingale defined with respect to a filtration, still it remains a martingale with respect to a greater filtration. 
More specifically, when there are two filtrations ${\cal G}^1$ and 
${\cal G}^2$, such that ${\cal G}^1 \subset {\cal G}^2$, as well known, 
a ${\cal G}^2$-martingale implies a 
${\cal G}^1$-martingale. 
But the converse is not true, namely, a ${\cal G}^1$-martingale does not always remain a martingale with respect to the greater filtration ${\cal G}^2$.
This is the classical problem of the enlargement of filtrations.
A usual assumption in this frame is the so-called Immersion property, 
namely, any ${\cal G}^1$-local martingale must also be a ${\cal G}^2$-local martingale, see 
\citet{T2017}, 
\citet{DT2022}
and for further details 
\citet{JYC2009}. 
Consequently, following the existing literature, the Immersion property is assumed in the present paper. 
Note that this is an essential tool to prove the Markov property of the model.

For $i=1, 2, 3$, the behaviour of each $\lambda^i$ is completely determined by a function of the history of the processes $N^i$,  up to time $t$, namely, by the so-called  
{\it history process} defined by $H^i_t=\left\{ \phi^i = (t^i_k)_{k \in \mathbb{N}}: t^i_k \leq t \right\}$, for $t > 0$, and 
$H^i_0 = \ \ \emptyset$, otherwise. 
These  are pure jump processes having the same jump times of 
$N^i$ and  taking values in the space of the history sets  
${\cal H}^i = \{\emptyset\} \cup \{\cup_{j \geq 1} {\cal H}^i_j \}$, 
where
$${\cal H}^i_j = \left\{ h = \left(t^i_1, \ldots, t^i_j \right): 
\ 0 < t^i_k<t^i_{k+1}, \  k =1, 2, \ldots, j \right\}.$$
By defining a suitable metric $d$, $({\cal H}^i, d)$ is a complete and separable metric space, 
and ${\cal B}({\cal H}^i)$ are the Borel subsets of ${\cal H}^i$. 
Given a $h \in {\cal H}^i$, let $j$ be 
the number of elements of $h$, then $h \in {\cal H}^i_j$, and 
$\sum_{k=1}^j {\bf 1}_{t^i_k \leq t}$, the corresponding trajectory, is the realization of $N^i$.
Thus, $H^i_t$ is a Markov process. 
Basic properties on the history process are in 
\citet{GTFiltering},
and further details in 
\citet{AHN}.

Unfortunately, the history processes are not really easy to handle. 
Hence, we want to avoid the use of them, which necessitates the introduction of some  heavy technicalities, 
but, at the same time, we must consider that  the vector of processes  $(N^1, N^2, N^3)$ 
still depend on their own whole history. 
To overcome this difficulty, we introduce the {\it shot noise processes}, in 
Definition \ref{def-xi} below.

\begin{definition}
\label{def-xi}
For $t > 0$, let  the  \underline{shot noise processes} be $(\xi^1_t, \xi^2_t, \xi^3_t)$ 
with 
\begin{equation}
\label{xikt-xi3t}
\xi^i_t := \sum_{j=1,2} \int_0^t \varphi_{ji}(t-u) dN^j(u)
\quad i = 1, 2, 
\qquad \xi^3_t =  \int_0^t \psi(t - u) dN^3(u).
\end{equation}
\end{definition}
Thus, $(\xi^1, \xi^2, \xi^3)$ contains all the information relating to the history of 
$(N^1, N^2, N^3)$ and of interest for $\lambda^i$, $i=1,2,3$,
given in 
(\ref{generallambdai}) and (\ref{lambda3}).
Consequently, 
\begin{equation}
\label{lambdakt}
\lambda^i(t)  = \lambda^i_0 + \xi^i_t,
\qquad i = 1, 2, 
\qquad \mbox{and} \qquad \lambda^3(t)  = ( \lambda^3_0 + \xi^3_t) {\bf 1}_{N(t-) > 0},
\end{equation}
where, $N(t)$ is the cardinality of the population at time $t$, as given in 
(\ref{cardinality}). 
Note that, 
(\ref{lambdakt}) implies that, 
$\lambda^i(t)$ are  deterministic measurable functions of 
$(\xi^1, \xi^2, \xi^3)$ at $t$.

\begin{theorem}
\label{lambdastructure}
Fix a $t$ and set $n := N^1(t) + N^2(t) + N^3(t)$, at the jump time $\tau_n$, 
\begin{eqnarray}
\label{lambdaitaun}
\lambda^i(\tau_n) &=& \lambda^i(\tau_n-) + \sum_{j=1,2}  \varphi_{ji}(0) \Delta N^j(\tau_n),
\qquad i = 1, 2,
\\
\label{lambda3taun}
\lambda^3(\tau_n)  &=& \left(\lambda^3(\tau_n-) 
+ \psi(0) \Delta N^3(\tau_n)\right)
{\bf 1}_{N(\tau_n-) > 0},
\end{eqnarray}
while, for  $\tau_n \leq s < t  \leq \tau_{n+1}$ 
and $i = 1, 2$, 
$$\lambda^i(t)  =  \lambda^i_0 +  \xi^i_s + 
\sum_{j =1,2} \ \sum_{k=1}^n
\left( \varphi_{ji}(t - \tau_k) - \varphi_{ji}(s - \tau_k) \right) \Delta N^j(\tau_k),$$
$$\lambda^3(t)  =  
\left( 
\lambda^3_0 + \xi^3_s + \sum_{k=1}^n \left(\psi(t - \tau_k) - \psi(s - \tau_k) \right) \Delta N^3(\tau_k) \right)
{\bf 1}_{N(\tau_n) > 0}.$$
\end{theorem}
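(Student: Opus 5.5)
The plan is to argue directly from the shot noise representation (\ref{lambdakt}) together with Definition \ref{def-xi}, splitting into the jump part and the inter-jump part.

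\emph{Jump times.} At $\tau_n$ we compare $\xi^i_{\tau_n}$ with $\xi^i_{\tau_n-}$. Writing $\xi^i_t=\sum_{j=1,2}\sum_{k}\varphi_{ji}(t-\tau_k)\Delta N^j(\tau_k){\bf 1}_{\tau_k\le t}$, the terms with $k<n$ are evaluated at $t=\tau_n$ and at $t\uparrow\tau_n$. By continuity of $\varphi_{ji}$ on $(0,\infty)$, which is implicitly assumed since Proposition \ref{systemODEBlambda} uses $\varphi'_{ji}$, these terms coincide in the limit. The only new term is $k=n$, which contributes $\varphi_{ji}(0)\Delta N^j(\tau_n)$. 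This gives (\ref{lambdaitaun}), consistent with (\ref{lambdaBjump}).

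For $\lambda^3$ the same computation gives $\xi^3_{\tau_n}=\xi^3_{\tau_n-}+\psi(0)\Delta N^3(\tau_n)$. The indicator ${\bf 1}_{N(\tau_n-)>0}$ is carried along from (\ref{lambdakt}), evaluated with left limits. It is automatically compatible with a death at $\tau_n$, because a death can only occur when $N(\tau_n-)>0$. So $\lambda^3(\tau_n-)$ may be read as $\lambda^3_0+\xi^3_{\tau_n-}$ on that event, and (\ref{lambda3taun}) follows.

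\emph{Between jumps.} For $\tau_n\le s<t\le\tau_{n+1}$ no new jump occurs in $(s,t)$, so $\xi^i_t$ and $\xi^i_s$ are sums over the same indices $k=1,\dots,n$. Hence
$\xi^i_t=\xi^i_s+\sum_{j}\sum_{k\le n}(\varphi_{ji}(t-\tau_k)-\varphi_{ji}(s-\tau_k))\Delta N^j(\tau_k)$,
and substituting into $\lambda^i(t)=\lambda^i_0+\xi^i_t$ gives the claim. If $t=\tau_{n+1}$ we interpret the value as the left limit, or note that the $(n+1)$th jump is accounted for by the jump formula. The same argument applies to $\xi^3$.

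For the indicator, $N$ is constant on $[\tau_n,t)$, so $N(t-)=N(\tau_n)$ and ${\bf 1}_{N(t-)>0}={\bf 1}_{N(\tau_n)>0}$. This matches (\ref{lambdaDbetween}).

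\emph{Main obstacle.} The delicate point is the bookkeeping of the indicator in $\lambda^3$ at $\tau_n$: which of $N(\tau_n-)$ and $N(\tau_n)$ appears, and the meaning of $\lambda^3(\tau_n-)$ when $N(\tau_n-)=0$. I would handle this by splitting into the cases $N(\tau_n-)>0$ and $N(\tau_n-)=0$. In the second case $\Delta N^3(\tau_n)=0$ necessarily, and both sides are $0$. The regularity of the kernels (right-continuity at $0$) also needs to be stated explicitly.
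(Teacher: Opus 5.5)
Your proposal is correct and follows essentially the same route as the paper. The jump relations are read off directly from (\ref{lambdakt}) and (\ref{xikt-xi3t}), and between jumps $\xi^i_t-\xi^i_s$ is a sum over the same $n$ jump times, which gives the stated representations.

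Your extra care about continuity of the kernels, the indicator ${\bf 1}_{N(\tau_n-)>0}$, and the endpoint $t=\tau_{n+1}$ tightens the argument; the paper's proof sidesteps that endpoint by treating only $t<\tau_{n+1}$. This is added rigour, not a different argument.
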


\begin{proof} 
Note that, at the jump time $\tau_n$, (\ref{lambdaitaun}) and (\ref{lambda3taun}) immediately follow by (\ref{lambdakt}), 
and  by (\ref{xikt-xi3t}). 
But, for $\tau_n \leq s < t < \tau_{n+1}$, since
\begin{eqnarray}
\xi^i_t -   \xi^i_s 
&=& 
\sum_{j =1,2} \ \sum_{k=1}^n
\left( \varphi_{ji}(t - \tau_k) - \varphi_{ji}(s - \tau_k) \right) \Delta N^j(\tau_k),
\qquad i = 1, 2,
\nonumber\\
\xi^3_t - \xi^3_s 
&=& \hspace{- 4 pt}
\int_0^s \hspace{- 4 pt}  \left(\psi(t - u) - \psi(s-u) \right) dN^3(u)
= \hspace{- 4 pt} \sum_{k=1}^n
\left( \psi(t - \tau_k) - \psi(s - \tau_k) \right) \Delta N^3(\tau_k), 
\nonumber
\end{eqnarray}
the representation for intensity processes is achieved as required. 
\end{proof}

\begin{corollary}
For all \ $0 < s < t$, and for $i = 1, 2$, by (\ref{xikt-xi3t}), and (\ref{lambdakt}), the 
representation for the intensity processes are given by
\begin{eqnarray}
\lambda^i(t) &=& \lambda^i_0 + \xi^i_s 
+ \sum_{j=1,2} \left( \int_0^s \left( \varphi_{ji}(t-u) - \varphi_{ji}(s-u) \right) dN^j(u) 
+ \int_s^t \varphi_{ji}(t-u)  dN^j(u)  \right),
\nonumber\\
\lambda^3(t) &=& 
\left( \lambda^3_0 + \xi^3_s 
+ \int_0^s \left(\psi(t - u) - \psi(s - u) \right) dN^3(u)  
+ \int_s^t \psi(t - u) dN^3(u)  
\right) {\bf 1}_{N(t-) > 0}.
\nonumber
\end{eqnarray}

\end{corollary}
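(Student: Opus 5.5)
The plan is to start from the definition of the shot noise processes in (\ref{xikt-xi3t}) and the representation (\ref{lambdakt}). For $i=1,2$ we have $\lambda^i(t) = \lambda^i_0 + \xi^i_t$, so it suffices to express $\xi^i_t$ in terms of $\xi^i_s$ plus a correction. Fix $0<s<t$ and split the integration domain $[0,t]$ of $\xi^i_t = \sum_{j=1,2}\int_0^t \varphi_{ji}(t-u)\,dN^j(u)$ into $[0,s]$ and $(s,t]$. This gives
$$\xi^i_t = \sum_{j=1,2}\left(\int_0^s \varphi_{ji}(t-u)\,dN^j(u) + \int_s^t \varphi_{ji}(t-u)\,dN^j(u)\right).$$
In the first integral I would add and subtract $\varphi_{ji}(s-u)$. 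The subtracted part, summed over $j$, reproduces exactly $\xi^i_s$ by definition, and the remainder is $\int_0^s(\varphi_{ji}(t-u)-\varphi_{ji}(s-u))\,dN^j(u)$. Plugging this into (\ref{lambdakt}) yields the first formula.

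For $\lambda^3$, the same decomposition is applied to $\xi^3_t = \int_0^t\psi(t-u)\,dN^3(u)$: split at $s$, then add and subtract $\psi(s-u)$ on $[0,s]$ to recover $\xi^3_s$. Multiplying by the indicator ${\bf 1}_{N(t-)>0}$, as in (\ref{lambdakt}), gives the second formula.

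All integrals are pathwise Stieltjes integrals against counting measures with finitely many atoms on $[0,t]$, since the processes are non-explosive because $\|\varphi\|_{L^1}<1$. Hence the linearity and additivity of the domain are elementary: each integral is a finite sum over the jump times $\tau_k$. This is consistent with Theorem \ref{lambdastructure}. When no jump lies in $(s,t]$, the integral over $(s,t]$ vanishes and we recover exactly that statement, so the corollary is just its extension to arbitrary $s<t$.

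The only delicate point is the bookkeeping at the endpoint $u=s$. A jump at $s$ must be counted in $\int_0^s$ (closed on the right), and the second integral must be taken over $(s,t]$, so that no atom is counted twice or omitted. A jump at $t$ itself is included through $\varphi(0)$, matching (\ref{lambdaitaun}). The indicator in $\lambda^3$ depends on $N(t-)$ rather than on $N(\tau_n)$. This needs no extra argument, because the indicator is simply carried over as a multiplicative factor from (\ref{lambdakt}).
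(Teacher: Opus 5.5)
Your proposal is correct and follows the route the paper intends: split the shot-noise integral in (\ref{xikt-xi3t}) at $s$, then add and subtract $\varphi_{ji}(s-u)$ (resp.\ $\psi(s-u)$) on $[0,s]$ to recover $\xi^i_s$. Substituting into (\ref{lambdakt}) gives the formulas, with the indicator ${\bf 1}_{N(t-)>0}$ carried along for $\lambda^3$. This is the computation in the proof of Theorem \ref{lambdastructure}, except that the term $\int_s^t$ is kept because jumps in $(s,t]$ are now allowed.
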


\section{Discussion on the Markov property}
\label{MarkovSection}

For $i = 1, 2, 3$, the representations of the shot noise $\xi^i$ and of the intensity $\lambda^i$, obtained in Theorem \ref{lambdastructure}, are the key to deduce the assumptions necessary to guarantee the Markov property of this model.

\begin{definition}
\label{Markovianity}
For  \  $0 < s < t$,
let us assume that there exist
$f_i(s,t)$,  $i = 1, 2, 3$, deterministic functions, such that the quantities
$$\varphi_{ji}(t-\tau_k) - f_i(s,t) \varphi_{ji}(s-\tau_k)  \quad i, j = 1, 2,
\quad \mbox{and} \quad 
\psi (t - \tau_k) - f_3 (s,t) \psi (s - \tau_k),$$
do not dependent of the jump time $\tau_k$, for any $\tau_k < s$. Then, we define
\begin{eqnarray}
\gamma_j(i, s, t) &:=& \varphi_{ji}(t-\tau_k) - f_i(s,t) \varphi_{ji}(s-\tau_k)  \qquad i, j = 1, 2,
\nonumber\\
\gamma(3, s, t) &:=&  \psi (t - \tau_k) - f_3 (s,t) \psi (s - \tau_k).
\nonumber
\end{eqnarray}
\end{definition}

\begin{theorem}
Under Definition \ref{Markovianity},  $(N^1, \lambda^1, N^2, \lambda^2, N^3, \lambda^3)$, the Hawkes processes and their intensities, satisfy the Markov property.
\end{theorem}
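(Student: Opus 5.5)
The idea is to show that, under Definition \ref{Markovianity}, the shot noises evolve between times $s<t$ through a \emph{deterministic affine recursion}. That recursion depends only on their value at $s$ and on the jumps in $(s,t]$. The process $Z_t:=(N^1(t),\lambda^1(t),N^2(t),\lambda^2(t),N^3(t),\lambda^3(t))$ is then a piecewise-deterministic process driven by state-dependent intensities, and Markovian in its own right.

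The first step starts from the Corollary after Theorem \ref{lambdastructure}. Writing $\varphi_{ji}(t-u)=f_i(s,t)\varphi_{ji}(s-u)+\gamma_j(i,s,t)$ for $u\le s$ (jump times $\tau_k<s$), we get for $i=1,2$
\begin{equation*}
\xi^i_t = f_i(s,t)\,\xi^i_s + \sum_{j=1,2}\gamma_j(i,s,t)\,N^j(s) + \sum_{j=1,2}\int_s^t \varphi_{ji}(t-u)\,dN^j(u),
\end{equation*}
and analogously $\xi^3_t=f_3(s,t)\xi^3_s+\gamma(3,s,t)N^3(s)+\int_s^t\psi(t-u)\,dN^3(u)$. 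By (\ref{lambdakt}), and because $N(t-)$ is a function of $N^1,N^2,N^3$, each $\lambda^i(t)$ is therefore a measurable function $G_i$ of $Z_s$, of $s,t$, and of the restriction of $(N^1,N^2,N^3)$ to $(s,t]$. The dependence on the pre-$s$ history passes only through $(\xi^i_s,N^j(s))$, and $\xi^i_s=\lambda^i_0\!-$shifted $\lambda^i(s)$. For $\lambda^3$ this holds on $\{N(s-)>0\}$. When $N(s-)=0$ we should instead carry $\xi^3_s$ itself. I would note that $\xi^3$ is recoverable from the state (or simply enlarge the state by $\xi^3$ if needed).

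The second step uses the same identity applied recursively on $(s,t]$. On each interval between consecutive jump times after $s$, the intensities are deterministic functions of $Z_s$ and of the jump times and marks already occurred in $(s,t]$. Hence the conditional law of the next jump time and jump type, given $\mathcal{F}_s$, is expressed through the survival function $\exp(-\int \sum_i\lambda^i)$ with $\lambda^i$ given by these functions. The conditional law of the whole path of $(N^1,N^2,N^3)$ on $(s,t]$ given $\mathcal{F}_s$ is thus a functional of $Z_s$ alone, by the standard construction of a point process from its compensator (Br\'emaud, Chapter II–III). Here the immersion assumption guarantees that the $\mathcal{F}^{N}$-intensities remain $\mathcal{F}_t$-intensities, so conditioning on $\mathcal{F}_s$ gives no extra information. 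Combining with step one, $\mathbb{E}[g(Z_t)\mid\mathcal{F}_s]=\mathbb{E}[g(Z_t)\mid Z_s]$ for bounded measurable $g$.

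The main obstacle is making step two rigorous. We must show that the conditional distribution of the future point process is determined by the intensity expressed as a function of $Z_s$ and of the future jumps. I would do this by induction over the successive jump times after $s$, identifying the conditional density of $(\tau_{n+1},\text{type})$ given $\mathcal{F}_{\tau_n}\vee\mathcal{F}_s$. A secondary subtlety is the indicator ${\bf 1}_{N(t-)>0}$ in $\lambda^3$, which loses $\xi^3$ when the population is empty; the enlarged-state remark above handles it.
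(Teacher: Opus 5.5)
Your Step 1 is the paper's proof. The paper derives the affine recursion $\xi^i_t=f_i(s,t)\xi^i_s+\sum_j\gamma_j(i,s,t)N^j(s)+\sum_j\int_s^t\varphi_{ji}(t-u)\,dN^j(u)$ (and its analogue for $\xi^3$) from Definition \ref{Markovianity}, and then only asserts that ``this structure guarantees the claim''. Your Step 2 supplies the justification the paper omits: the intensities between jumps are deterministic functionals of the time-$s$ state and the later jumps, and the conditional law is then determined jump by jump, i.e.\ by uniqueness of the conditional law given the compensator. So the route is the same, only more complete.

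One remark is false and must be corrected: under (\ref{lambdakt}), $\xi^3_s$ is not recoverable from $Z_s$ on $\{N(s-)=0\}$.

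\begin{itemize}
\item Take the history on $[0,s]$ consisting of one mutant birth at time $a$ and its death at time $b\in(a,s)$.
\item Then $Z_s=(1,\lambda^1_0+\varphi_{11}(s-a),0,\lambda^2_0+\varphi_{12}(s-a),1,0)$ does not depend on $b$.
\item After the next birth, however, the death intensity is $\lambda^3_0+\psi(t-b)$ until the next death. This depends on $b$ whenever $\psi$ is non-constant, e.g.\ $\psi(t)=\alpha_3e^{-\beta_3t}$ with $\alpha_3>0$.
\end{itemize}

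So with the indicator inside $\lambda^3$, the sextuple is actually not Markov, even with respect to its own filtration. Your ``enlarge the state by $\xi^3$'' option is therefore mandatory, not a fallback.

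The clean formulation is to take the sixth coordinate to be $\lambda^3_0+\xi^3_t$ and to put ${\bf 1}_{N(t-)>0}$ only into the jump rate of $N^3$. This is how the generator of Theorem \ref{generator} treats $l_3$ (rate $l_3{\bf 1}_{n_1+n_2-n_3>0}$), and it is what the paper's proof tacitly uses when it expresses $\lambda^3(t)$ through $\xi^3_s$. With that reading your argument goes through as written.
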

\begin{proof}
Note that 
\begin{equation}
\label{numero1-i}
\xi^i_t - f_i(s,t) \xi^i_s = \sum_{j=1,2} \left( \gamma_j(i, s, t) N^j(s)  + \int_s^t \varphi_{ji}(t-u)  dN^j(u)  \right), \quad \mbox{for} \  i=1, 2,
\end{equation}
and 
\begin{equation}
\label{numero1-3}
\xi^3_t - f_3(s,t) \xi^3_s = \gamma(3, s, t) N^3(s)  + \int_s^t \psi(t - u) dN^3(u).
\end{equation}
Consequently, 
$$\lambda^i(t) = \lambda^i_0 + f_i(s,t) \xi^i_s +  \sum_{j=1,2} \left( \gamma_j(i, s, t) N^j(s) 
+ \int_s^t \varphi_{ji}(t-u)  dN^j(u) \right), \quad \mbox{for} \ i = 1, 2,$$
and 
$$\lambda^3(t) =
\left( 
\lambda^3_0 + f_3(s,t) \xi^3_s + \gamma(3, s, t) N^3(s) + \int_s^t \psi(t - u) dN^3(u)
\right)
{\bf 1}_{N(t-) > 0}.$$ 
Hence, for $i=1,2,3$, we are able  to represent $\xi^i$ and 
$\lambda^i$, at time $t$, in term of themselves at time $s$, $s < t$, and  of some functions of $s$ and $t$. 
This structure guarantees the claim.
\end{proof}

\begin{proposition}
\label{f-exp}
Under Definition \ref{Markovianity}, $f_i(s,t)$ must be necessarily an exponential function, 
namely, there exists a positive real constant $\beta_i$ such that
$f_i (s, t) = e^{-\beta_i (t - s)}.$
\end{proposition}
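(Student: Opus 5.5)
We work with a generic kernel $\phi$, standing for any of the $\varphi_{ji}$ or $\psi$, and a generic function $f=f_i$ or $f_3$. The condition in Definition \ref{Markovianity} says that for fixed $0<s<t$ the map $x\mapsto \phi(t-x)-f(s,t)\phi(s-x)$ is constant over all possible jump times $x<s$. Put $u=s-x>0$ and $h=t-s>0$. The condition becomes
\[
\phi(u+h)-f(s,t)\phi(u)=\gamma(s,t)\qquad \text{for all } u>0 .
\]
Since the left-hand side involves $s,t$ only through $f$ and $\gamma$, we would first argue that $f$ and $\gamma$ may be taken to depend only on $h=t-s$. This assumes $\phi$ is not identically constant; otherwise the Poisson/constant case is trivial and any choice, in particular the exponential one, works. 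Indeed, if $\phi(u+h)-a\phi(u)$ and $\phi(u+h)-a'\phi(u)$ are both constant in $u$, then $(a-a')\phi(u)$ is constant, which forces $a=a'$. Write $g(h):=f(s,t)$.

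Next we exploit integrability. Because $\phi\in L^1(0,\infty)$ and $\phi\ge0$, letting $u\to\infty$ along a suitable sequence on which $\phi(u)\to0$ and $\phi(u+h)\to0$ forces $\gamma=0$. Such a sequence exists, for instance by averaging over $u$ and using $\int_U^{U+1}(\phi(u)+\phi(u+h))\,du\to0$. Hence $\phi(u+h)=g(h)\phi(u)$ for all $u,h>0$. Iterating gives
\[
g(h_1+h_2)\,\phi(u)=\phi(u+h_1+h_2)=g(h_1)g(h_2)\,\phi(u),
\]
and choosing $u$ with $\phi(u)\neq0$ yields the Cauchy equation $g(h_1+h_2)=g(h_1)g(h_2)$.

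The main obstacle is to pass from this functional equation to the exponential form without assuming continuity. Here measurability suffices. The relation $g(h)=\phi(u+h)/\phi(u)$ for a fixed $u$ with $\phi(u)\ne0$ shows that $g$ is measurable. A measurable multiplicative solution of Cauchy's equation is either identically zero or of the form $e^{ch}$.

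It remains to rule out the degenerate cases. If $g\equiv0$, then $\phi$ vanishes on $(u,\infty)$ for every $u$ with $\phi(u)\neq0$, and hence everywhere, which is the trivial case. Positivity of $g$ follows from $g(h)=g(h/2)^2\ge0$. Finally, $\phi(u+h)=\phi(u)e^{ch}$ together with $\phi\in L^1$ forces $c<0$. Writing $c=-\beta_i$ with $\beta_i>0$ gives $f_i(s,t)=e^{-\beta_i(t-s)}$. As a byproduct, $\phi(u)=\phi(0)e^{-\beta_i u}$, and all kernels entering $\lambda^i$ share the same $\beta_i$ because they share the single function $f_i$.
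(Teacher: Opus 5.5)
Your proof is correct, and it takes a different route from the paper's. The paper writes $f_i(s,t)$ as a ratio of differences of $\varphi_{ji}$ at two jump times $\tau_k,\tau_l<s$. From this it asserts the cocycle identity $f_i(s,t)=f_i(s,r)f_i(r,t)$ and, by shifting the jump times, that $f_i$ depends only on $t-s$. It then quotes Cauchy's equation to get $e^{-\beta_i(t-s)}$ with $\beta_i\in\mathbb{R}$.

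The paper never uses $\varphi_{ji}\in L^1$. That keeps it compatible with the constant offsets $\delta_{ji}$ (i.e.\ $\gamma\neq0$) of Theorem \ref{exp-kernel}. However, it does not address the regularity needed to exclude pathological Cauchy solutions, and it does not prove the positivity of $\beta_i$ that the statement asserts.

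You instead get translation invariance from uniqueness of $f$ for a non-constant kernel, and use integrability to force $\gamma=0$. The Cauchy equation, measurability of $g$, the dichotomy $g\equiv0$ or $g=e^{ch}$, and $c<0$ then all follow from the single identity $\phi(u+h)=g(h)\phi(u)$. This proves the statement as written, including $\beta_i>0$, and gives the pure exponential kernel ($\delta=0$) along the way. Integrability is essential only for the sign. With $\gamma\neq0$, expanding $\phi(u+h_1+h_2)$ in two ways and reusing your uniqueness argument still yields $g(h_1+h_2)=g(h_1)g(h_2)$, and the paper's ratio formula supplies measurability.

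One point needs tightening. Jump times lie in $(0,s)$, so for fixed $(s,t)$ the identity holds only for $u\in(0,s)$, not for all $u>0$. Your comparison of $(s,s+h)$ with $(s',s'+h)$ therefore runs on $(0,\min(s,s'))$, where $\phi$ must be non-constant. The repair is short:
\begin{itemize}
\item Let $s^*$ be the supremum of those $s$ for which $\phi$ is constant on $(0,s)$. Set $s^*=0$ if there are none; $s^*=\infty$ happens only in the trivial case $\phi\equiv0$.
\item For $s,s'>s^*$ the constants $f(s,s+h)$, $\gamma(s,s+h)$ agree, so the identity holds with $g(h),\gamma(h)$ on $\bigcup_{s>s^*}(0,s)=(0,\infty)$.
\item Your argument then gives $\phi(u)=\alpha e^{-\beta u}$ with $\alpha\neq0$. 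This forces $s^*=0$, so $f(s,t)=e^{-\beta(t-s)}$ for every $0<s<t$.
\end{itemize}
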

\begin{proof}
For $0 < s < t$, let $\tau_k$ and $\tau_l$ be jump times of $(N^1, N^2, N^3)$,   which are also jump times of $N^j$, $j=1,2$, such that $\tau_k \vee \tau_l < s$. 
Then, taking into account Definition \ref{Markovianity},
\begin{equation}
\label{numero2}
\gamma_j(i, s, t) 
= \varphi_{ji}(t-\tau_k) - f_i(s,t) \varphi_{ji}(s-\tau_k)
= \varphi_{ji}(t-\tau_l) - f_i(s,t) \varphi_{ji}(s-\tau_l),
\end{equation}
for $i=1,2$. 
Hence,
\[
f_i(s, t) = \frac{ \varphi_{ji}(t-\tau_k) - \varphi_{ji}(t-\tau_l) }{ \varphi_{ji}(s-\tau_k) - \varphi_{ji}(s-\tau_l) },
\]
and 
for each $r$ such that  $s \leq r \leq t$, $f_i(s, t) =  f_i(s, r) \cdot f_i(r, t)$.
Considering the sequence of times $\{\tau'_j\}$, where $\tau'_j = \tau_j -s +1$, then
\[
f_i(s, t) 
= \frac{ \varphi_{ji}(t-s + 1-\tau'_k) - \varphi_{ji}(t - s + 1 - \tau'_l) }
{ \varphi_{ji}(1-\tau'_k) - \varphi_{ji}(1-\tau'_l) }
= \tilde f_i(t-s),
\]
and setting $u:= t - r$ and $v := r - s$, we get the Cauchy functional identity 
$\tilde f_i (u+v) = \tilde f_i (u) \cdot \tilde f_i (v)$.
As well known, this implies that $\tilde f_i (0) = 1$ and 
$\tilde f_i (u) = e^{-\beta_i u}$, for a $\beta_i \in \mathbb{R}$.  
Analogous procedure allows us to achieve the same result for $f_3(s,t)$.
\end{proof}

We have to highlight the results of Theorem \ref{exp-kernel} below, since (\ref{Phik-linear-exponential}) establishes an important restriction on the family of memory kernels of Hawkes processes that satisfy the  Markov property.
\begin{theorem}
\label{exp-kernel}
The family of memory kernels satisfying Definition \ref{Markovianity} is 
\begin{equation}
\label{Phik-linear-exponential}
\varphi_{ji}(t) = \delta_{ji} + \alpha_{ji} e^{-\beta_i t}, \quad i,j = 1,2,
\qquad \mbox{and} \qquad 
\psi(t) = \delta_3 + \alpha_3 e^{-\beta_3 t},
\end{equation}
where $\alpha_{ji}$, $\delta_{ji}$, $i,j = 1,2$, $\beta_i$, $i = 1,2, 3$, $\alpha_3$, and $\delta_3$ 
are positive real constants.
\end{theorem}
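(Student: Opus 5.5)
The plan is to turn Definition \ref{Markovianity}, together with Proposition \ref{f-exp}, into a functional equation for each kernel and then solve it. Fix $i,j$ and write $\varphi=\varphi_{ji}$, $\beta=\beta_i$. By Proposition \ref{f-exp} we have $f_i(s,t)=e^{-\beta(t-s)}$. Definition \ref{Markovianity} then says that
$$\varphi(t-\tau)-e^{-\beta(t-s)}\varphi(s-\tau)$$
does not depend on $\tau$ for $\tau<s$. Since the jump times can fall anywhere in $(0,s)$, I would take this as an identity in a continuous variable. Set $x:=s-\tau>0$ and $h:=t-s>0$. The condition becomes
$$\varphi(x+h)-e^{-\beta h}\varphi(x)=g(h)$$
for all $x>0$, where $g$ depends only on $h$. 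By translation invariance, the dependence on $s$ disappears once we use $h$.

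The first step is to solve this equation. Put $\chi(x):=\varphi(x)-c$ for a constant $c$ to be chosen so that the inhomogeneous term vanishes. Then $g(h)=\chi(x+h)-e^{-\beta h}\chi(x)+c(1-e^{-\beta h})$.

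The main obstacle is to show that $g(h)=\delta(1-e^{-\beta h})$ for some constant $\delta$. I would get this from a cocycle identity. Applying the relation twice, with steps $h$ and $k$, gives
$$g(h+k)=g(k)+e^{-\beta k}g(h).$$
Swapping the roles of $h$ and $k$ gives
$$g(h+k)=g(h)+e^{-\beta h}g(k).$$
Comparing the two, $g(h)(1-e^{-\beta h})^{-1}$ equals $g(k)(1-e^{-\beta k})^{-1}$ for all $h,k$ when $\beta\neq0$, so this ratio is a constant $\delta$. Taking $c=\delta$, we get $\chi(x+h)=e^{-\beta h}\chi(x)$ for all $x,h>0$. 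Hence $\chi(x)e^{\beta x}$ is constant on $(0,\infty)$, that is $\chi(x)=\alpha e^{-\beta x}$. This yields $\varphi_{ji}(t)=\delta_{ji}+\alpha_{ji}e^{-\beta_i t}$.

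The exponent depends only on $i$ because $f_i$ is shared by $j=1,2$. The degenerate case $\beta=0$ forces $g$ to be additive; since $\varphi$ is measurable, $g$ is linear, and linear growth is incompatible with the integrability required of kernels. The same argument applied to $\psi$ with $f_3$ gives $\psi(t)=\delta_3+\alpha_3e^{-\beta_3 t}$.

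Conversely, substituting these forms shows that $\gamma_j(i,s,t)=\delta_{ji}(1-e^{-\beta_i(t-s)})$ is independent of $\tau_k$, so the family does satisfy Definition \ref{Markovianity}. Positivity of the constants would be imposed from the nonnegativity of the excitation functions (and taken as part of the model). The delicate point I expect is the passage from "independent of actual jump times" to a genuine identity for all $x>0$. I would handle it by arguing over all possible jump configurations, since the jump times have a density, and using continuity of $\varphi$ where needed.
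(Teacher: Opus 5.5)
Your proposal is correct and follows the paper's route. Both arguments combine $f_i(s,t)=e^{-\beta_i(t-s)}$ from Proposition \ref{f-exp} with the $\tau_k$-independence in Definition \ref{Markovianity} to get the functional equation $\varphi_{ji}(x+h)-e^{-\beta_i h}\varphi_{ji}(x)=g(h)$; the paper writes it in the equivalent differenced form over two jump times $\tau_k,\tau_l$ and then simply asserts the form (\ref{Phik-linear-exponential}). Your cocycle identity $g(h+k)=g(k)+e^{-\beta k}g(h)=g(h)+e^{-\beta h}g(k)$ forces $g(h)=\delta(1-e^{-\beta h})$ and then makes $\chi(x)e^{\beta x}$ constant, which supplies exactly the solving step the paper omits, and for $\beta\neq 0$ it needs no regularity of the kernel at all.
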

\begin{proof}

Taking into account that, under Definition \ref{Markovianity}, $f_i (s,t) = e^{-\beta_i (t-s)}$, 
and choosing $\tau_k$ and $\tau_l$,  
jump times of 
$(N^1, N^2, N^3)$, 
by (\ref{numero2}), we have that 
$$\varphi_{ji}(t-\tau_k) - \varphi_{ji}(t-\tau_l) 
= e^{-\beta_i (t-s)} \left[ \varphi_{ji}(s-\tau_k) - \varphi_{ji}(s-\tau_l) \right].$$
Choosing again $\tau'_k = \tau_k -s +1$, 
$$\varphi_{ji}(t-s + 1 -\tau'_k) - \varphi_{ji}(t-s+1 -\tau'_l) 
= e^{-\beta_i (t-s)} \left[ \varphi_{ji}(1-\tau'_k) - \varphi_{ji}(1-\tau'_l) \right],$$
for all $k$, 
which allows us to deduce that there exist the constants 
$\alpha_{ji}$, $\beta_i$ and $\delta_{ji} \in \mathbb{R}$, such that $\varphi_{ji}(t)$ assumes the form given in 
(\ref{Phik-linear-exponential}). 
Analogously, we are able to get the constants $\alpha_3$, $\beta_3$ and $\delta_3 \in \mathbb{R}$.
\end{proof}

\begin{corollary}
Given $f_i (s, t) = e^{-\beta_i (t - s)}$ and the results achieved in Theorem \ref{exp-kernel}, immediately, we get that, for $i, j = 1,2$,
$\gamma_j(i, s, t) = \delta_{ji} \left[ 1 - e^{-\beta_i (t - s )}  \right]$ and 
$\gamma(3, s, t) = \delta_3 \left[ 1 - e^{-\beta_3 (t - s )}  \right]$.
\end{corollary}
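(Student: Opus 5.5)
The corollary follows by direct substitution into Definition \ref{Markovianity}, so the plan is a short computation. By Proposition \ref{f-exp} we have $f_i(s,t)=e^{-\beta_i(t-s)}$, and by Theorem \ref{exp-kernel} the kernels are $\varphi_{ji}(t)=\delta_{ji}+\alpha_{ji}e^{-\beta_i t}$. Insert both into the defining expression $\gamma_j(i,s,t)=\varphi_{ji}(t-\tau_k)-f_i(s,t)\varphi_{ji}(s-\tau_k)$, for an arbitrary jump time $\tau_k<s$.

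The key step is the cancellation of the exponential parts. Expanding gives
\begin{equation*}
\gamma_j(i,s,t)=\delta_{ji}+\alpha_{ji}e^{-\beta_i(t-\tau_k)}-e^{-\beta_i(t-s)}\bigl(\delta_{ji}+\alpha_{ji}e^{-\beta_i(s-\tau_k)}\bigr).
\end{equation*}
Since $e^{-\beta_i(t-s)}e^{-\beta_i(s-\tau_k)}=e^{-\beta_i(t-\tau_k)}$, the two $\alpha_{ji}$ terms cancel exactly. What remains is $\delta_{ji}\bigl[1-e^{-\beta_i(t-s)}\bigr]$.

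This remainder has no dependence on $\tau_k$. That confirms the requirement of Definition \ref{Markovianity} is met, so $\gamma_j(i,s,t)$ is well defined.

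The case of $\psi$ is identical. Use $f_3(s,t)=e^{-\beta_3(t-s)}$, which Proposition \ref{f-exp} also covers, together with $\psi(t)=\delta_3+\alpha_3e^{-\beta_3t}$. The same cancellation yields $\gamma(3,s,t)=\delta_3\bigl[1-e^{-\beta_3(t-s)}\bigr]$.

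There is no real obstacle here. The only point needing care is that the decay rate $\beta_i$ in the kernel $\varphi_{ji}$ is the same $\beta_i$ that appears in $f_i$, for every $j$. That is how Theorem \ref{exp-kernel} indexes it, by the receiving component $i$, and it is exactly what makes the exponential terms cancel.
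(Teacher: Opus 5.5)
Your proposal is correct and is essentially the paper's argument: the paper calls the corollary immediate, and it amounts to substituting $f_i(s,t)=e^{-\beta_i(t-s)}$ and the kernels of Theorem \ref{exp-kernel} into Definition \ref{Markovianity}. The $\alpha_{ji}$ (resp.\ $\alpha_3$) terms cancel, leaving $\delta_{ji}\left[1-e^{-\beta_i(t-s)}\right]$ (resp.\ $\delta_3\left[1-e^{-\beta_3(t-s)}\right]$), exactly as you computed.
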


Definition \ref{Markovianity} is the fundamental tool to get the Markov property. 
Proposition \ref{f-exp} and Theorem \ref{exp-kernel} assert that the Hawkes processes that satisfies Definition \ref{Markovianity} must have necessarily memory kernels with the structure 
(\ref{Phik-linear-exponential}). 
This allows us to assure that the class of memory kernels of Hawkes processes satisfying the Markov property cannot be larger than that having the structure given in  (\ref{Phik-linear-exponential}). 
An analogous procedure generalises this result to a finite number of Hawkes processes, Theorem \ref{CNES}.

\begin{theorem}
\label{CNES}
The mutually exciting Hawkes processes and their intensity processes 
$$\left( N^1, \lambda^1, N^2, \lambda^2, \ldots, N^m, \lambda^m  \right)$$
satisfy the Markov property if and only if 
the memory kernels assume the form given in (\ref{Phik-linear-exponential}).
\end{theorem}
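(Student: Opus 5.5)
The equivalence will be proved in two directions. Throughout, the $m$-dimensional analogues of the shot noise processes of Definition \ref{def-xi} are used, namely $\xi^i_t=\sum_{j=1}^m\int_0^t\varphi_{ji}(t-u)\,dN^j(u)$, together with the $m$-dimensional version of Definition \ref{Markovianity}: for $0<s<t$ there is a deterministic $f_i(s,t)$ such that $\varphi_{ji}(t-\tau_k)-f_i(s,t)\varphi_{ji}(s-\tau_k)=:\gamma_j(i,s,t)$ does not depend on the jump time $\tau_k<s$.

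\emph{Sufficiency.} Assume $\varphi_{ji}(t)=\delta_{ji}+\alpha_{ji}e^{-\beta_i t}$. Then $\varphi_{ji}(t-\tau_k)-e^{-\beta_i(t-s)}\varphi_{ji}(s-\tau_k)=\delta_{ji}\big(1-e^{-\beta_i(t-s)}\big)$, which is free of $\tau_k$. So Definition \ref{Markovianity} holds with $f_i(s,t)=e^{-\beta_i(t-s)}$. Exactly as in the proof of the theorem preceding Proposition \ref{f-exp}, we get
\[
\xi^i_t=e^{-\beta_i(t-s)}\xi^i_s+\sum_{j=1}^m\Big(\gamma_j(i,s,t)N^j(s)+\int_s^t\varphi_{ji}(t-u)\,dN^j(u)\Big).
\]
Given $\lambda^i(s)=\lambda^i_0+\xi^i_s$ and $N^j(s)$, the evolution on $(s,t]$ is then driven only by the state at time $s$ and by the future jumps. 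Those jumps have intensities that are functions of this same representation. Using the Immersion property, the conditional law of the future given $\mathcal F_s$ therefore depends only on $(N^j(s),\lambda^j(s))_{j\le m}$, which is the Markov property. One can equally observe that the pair solves a Markovian SDE, $d\lambda^i=\beta_i(\lambda^i_0-\lambda^i)dt+\beta_i\sum_j\delta_{ji}N^j\,dt+\sum_j(\alpha_{ji}+\delta_{ji})\,dN^j$.

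\emph{Necessity.} Suppose the vector $(N^j,\lambda^j)_{j\le m}$ is Markov. The main step is to justify that this forces the $m$-dimensional Definition \ref{Markovianity}. The plan is as follows. By Theorem \ref{lambdastructure} (extended to $m$ components), $\lambda^i(t)$ for $t>s$, with no jumps in $(s,t)$, equals
\[
\lambda^i_0+\sum_j\sum_{k\le N(s)}\varphi_{ji}(t-\tau_k)\Delta N^j(\tau_k).
\]
The Markov property requires this to be a deterministic function of $(N(s),\lambda(s))$ and $t$. Take two histories with the same $N(s)$ and the same $\lambda^i(s)$ but different jump locations; this is possible by perturbing two jump times $\tau_k,\tau_l$ of $N^j$ along the level set of $\lambda^i(s)$. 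The values of $\lambda^i(t)$ must then coincide. Linearizing in the perturbation shows that $\varphi_{ji}(t-\tau)-f_i(s,t)\varphi_{ji}(s-\tau)$ is independent of $\tau$, with $f_i(s,t)$ equal to the ratio of derivatives. This yields Definition \ref{Markovianity}. Proposition \ref{f-exp}, whose argument is componentwise and hence valid for any $m$, then gives $f_i(s,t)=e^{-\beta_i(t-s)}$. Theorem \ref{exp-kernel}, applied for each pair $(j,i)$ with $j,i\le m$, gives the form (\ref{Phik-linear-exponential}).

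The expected obstacle is this reduction from the Markov property to Definition \ref{Markovianity}. It needs enough regularity of the kernels (differentiability) and enough freedom in the jump configurations, i.e.\ that the law of jump times charges open sets. I would first try to handle it using the positivity of the intensities, so that every configuration of jump times occurs with positive density. The remaining steps follow from Proposition \ref{f-exp} and Theorem \ref{exp-kernel} index by index, since each kernel $\varphi_{ji}$ enters only through $\lambda^i$.
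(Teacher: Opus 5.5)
\textbf{Where the argument fails.} Your sufficiency half is fine. Your SDE, with jump size $\alpha_{ji}+\delta_{ji}=\varphi_{ji}(0)$, is in fact the correct version of Proposition \ref{differentiable}. Your overall route is also the paper's. The paper proves only that Definition \ref{Markovianity} forces (\ref{Phik-linear-exponential}), via Proposition \ref{f-exp} and Theorem \ref{exp-kernel}. It then asserts Theorem \ref{CNES} ``by an analogous procedure'' and never shows that the Markov property implies Definition \ref{Markovianity}.

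The step you add to fill that hole is where the argument breaks, and not merely for lack of regularity. The Markov state at time $s$ is the whole vector $(N^{i'}(s),\lambda^{i'}(s))_{i'\le m}$. Moving $\tau_k,\tau_l$ along the level set of $\lambda^i(s)$ alone changes every $\lambda^{i'}(s)$ with $\varphi_{ji'}\not\equiv 0$. Your two histories therefore sit in different states, and the Markov property says nothing about their values of $\lambda^i(t)$ agreeing. With two free times and $m\ge 2$ constraints, the joint level set is generically discrete, so there is no curve to linearize along. If you free $m+1$ jump times and move on the joint level set, the linearization at best yields that
\[
\varphi_{ji}(t-\tau)-\sum_{i'=1}^{m}F_{ii'}(s,t)\,\varphi_{ji'}(s-\tau)
\]
does not depend on $\tau$. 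This is a matrix version of Definition \ref{Markovianity}, whose solutions are $(\varphi_{j1},\dots,\varphi_{jm})^{T}(t)=\delta_j+e^{-Bt}a_j$ with one common $m\times m$ matrix $B$. Your remark that one can argue index by index ``since each kernel $\varphi_{ji}$ enters only through $\lambda^i$'' is exactly the faulty point. $\varphi_{ji}$ enters only $\lambda^i$, but the future of $\lambda^i$ may depend on the present values of the other intensities.

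\textbf{The step cannot be repaired.} For $m\ge 2$ the ``only if'' direction is false. Take $m=2$, $\varphi_{j1}(t)=a_je^{-\beta t}$ and $\varphi_{j2}(t)=a_j\,t\,e^{-\beta t}$ for $j=1,2$, with $\beta>0$ and $a_j>0$ small. Write $\xi^i=\lambda^i-\lambda^i_0$. Between jumps, $\dot\xi^1=-\beta\xi^1$ and $\dot\xi^2=\xi^1-\beta\xi^2$. A jump of $N^j$ (at rate $\lambda^j$) adds $a_j$ to $\xi^1$ and nothing to $\xi^2$. Hence $(N^1,\lambda^1,N^2,\lambda^2)$ is a piecewise-deterministic Markov process. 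Yet $\varphi_{12},\varphi_{22}$ are not of the form (\ref{Phik-linear-exponential}), and no scalar $f_2(s,t)$ as in Definition \ref{Markovianity} exists. A correct theorem must enlarge the class to the matrix-exponential kernels above, or restrict what ``Markov'' means.

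\textbf{A secondary point, even for $m=1$.} Quoting Theorem \ref{exp-kernel} inherits its omission of the case $\beta_i=0$, which Proposition \ref{f-exp} allows. That case gives affine kernels $\delta+at$; for example, $\varphi(t)=at$ gives the Markov pair $d\lambda=aN\,dt$. Ruling it out needs integrability, which also forces $\delta_{ji}=0$.
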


\begin{remark}
\label{widesensestazionarity}
Note that from 
\citet{LimaChoi} and 
\citet{Hawkes1971}
on the assumption  that
$\int_0^{+\infty} \varphi_{ji}(t) \ dt \leq 1$ and $\int_0^{+\infty} \psi(t) \ dt \leq 1$, 
we are assured that the corresponding processes will attain wide-sense stationary behaviours. 
For the present model, these assumptions allow us to write that
$\mathbb{E} [ \lambda^i(t) ] < 4 (\lambda^1_0 \vee \lambda^2_0)$, for $i =1, 2$, and $\mathbb{E} [ \lambda^3(t) ] < 2 \lambda^3_0$. 
Note that, 
under Definition \ref{Markovianity}, namely in the Markov case, if we want non-explosion of the processes, $N^1$, $N^2$, and $N^3$,
we must assume  that, for $i,j =1,2$,
\begin{equation}
\label{ws-stat-behav-0}
\delta_{ij} = 0, 
\qquad \delta_3 = 0,
\end{equation}
\begin{equation}
\label{ws-stat-behav-<}
\qquad \alpha_{ji} \leq \beta_i \qquad \mbox{and} \qquad \alpha_3 \leq \beta_3.
\end{equation}
\end{remark}

\subsection{Dynamical behaviour of the intensities}

Under Definition \ref{Markovianity}, we are going to give the representations of the behaviours of $\lambda^i$ and $\xi^i$, for $i=1,2,3$. 
Then, we write the infinitesimal generator 
for $(N^1, \lambda^1, N^2, \lambda^2, N^3, \lambda^3)$, given by the mutually exciting Hawkes processes, the self-exciting  Hawkes process and their intensities.

\begin{proposition}
\label{differentiable}
Under Definition \ref{Markovianity}, 
for $i=1,2,3$, we have that
\[
d\lambda^i(t) = d\xi^i_t  = \beta_i  \left(
\sum_{j=1,2}  \delta_{ji} N^j(t-) - \lambda^i(t) + \lambda^i_0 \right) dt 
+  \sum_{j=1,2}  \alpha_{ji} d N^j(t), 
\quad i = 1, 2, 
\]
\[
d\lambda^3(t) = d\xi^3_t = \beta_3 \left( \delta_3 N^3(t-) 
- \lambda^3(t) + \lambda^3_0 
\right) dt 
+ \alpha_3 d N^3(t).
\]
\end{proposition}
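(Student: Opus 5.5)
Under Definition \ref{Markovianity}, Theorem \ref{exp-kernel} gives $\varphi_{ji}(t) = \delta_{ji} + \alpha_{ji} e^{-\beta_i t}$ and $\psi(t) = \delta_3 + \alpha_3 e^{-\beta_3 t}$. The plan is to substitute these kernels into the shot noise of Definition \ref{def-xi} and differentiate in $t$. Since $\lambda^i(t) = \lambda^i_0 + \xi^i_t$ by (\ref{lambdakt}) with $\lambda^i_0$ constant, we have $d\lambda^i = d\xi^i$, and it suffices to compute $d\xi^i_t$. For $i=1,2$ we split
$$\xi^i_t = \sum_{j=1,2} \delta_{ji} N^j(t) + \sum_{j=1,2} \alpha_{ji} \int_0^t e^{-\beta_i (t-u)} dN^j(u).$$
We then write the second piece as $e^{-\beta_i t} Y^{ij}_t$ with $Y^{ij}_t := \int_0^t e^{\beta_i u} dN^j(u)$, a finite-variation pure-jump process.

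The key step is to apply the product rule for finite-variation processes; no It\^o correction arises because $e^{-\beta_i t}$ is continuous. This gives
$$d\left( e^{-\beta_i t} Y^{ij}_t \right) = -\beta_i e^{-\beta_i t} Y^{ij}_{t-} dt + dN^j(t).$$
Multiplying by $\alpha_{ji}$ and summing over $j$ yields a drift $-\beta_i (\xi^i_{t-} - \sum_j \delta_{ji} N^j(t-)) dt$ and a jump part $\sum_j \alpha_{ji} dN^j(t)$. The constant part of the kernel contributes $\sum_j \delta_{ji} dN^j(t)$. Finally, we substitute $\xi^i_{t-} = \lambda^i(t-) - \lambda^i_0$ and note that $\lambda^i(t-)$ and $\lambda^i(t)$ agree $dt$-a.e., since there are only countably many jumps. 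The drift then becomes $\beta_i\left(\sum_j \delta_{ji} N^j(t-) - \lambda^i(t) + \lambda^i_0\right)dt$.

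The same computation applied to $\xi^3$ with $\psi$ produces the stated $\lambda^3$ equation. On $\{N(t-)>0\}$ the indicator in (\ref{lambdakt}) equals one, so there $d\lambda^3 = d\xi^3$. I would either restrict to that set or read $\lambda^3$ as $\lambda^3_0 + \xi^3$, as the statement implicitly does.

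The main obstacle is matching the jump coefficients. My computation gives jumps $(\delta_{ji}+\alpha_{ji})\,dN^j$, equal to $\varphi_{ji}(0)\,dN^j$ as in (\ref{lambdaBjump}), rather than $\alpha_{ji}\,dN^j$ as written. I would reconcile this using Remark \ref{widesensestazionarity}, where non-explosion forces $\delta_{ji}=0$ and $\delta_3=0$. In that regime the jump coefficient is exactly $\alpha_{ji}$, and the $\delta$-drift terms vanish consistently. Otherwise I would state the jump term as $\varphi_{ji}(0)\,dN^j$, which reduces to the claimed form when the $\delta$'s vanish.
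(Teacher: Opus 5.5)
Your computation is correct, and on the jump term it is more accurate than the paper's own proof. The paper argues piecewise. For $\tau_n \le t < t+h < \tau_{n+1}$ it uses the Markov representation (\ref{numero1-i})--(\ref{numero1-3}) with $f_i(s,t)=e^{-\beta_i(t-s)}$ and $\gamma_j(i,s,t)=\delta_{ji}(1-e^{-\beta_i(t-s)})$. This gives $\xi^i_{t+h}-e^{-\beta_i h}\xi^i_t=\sum_j\delta_{ji}(1-e^{-\beta_i h})N^j(t)$, and letting $h\to 0$ yields the drift $\beta_i(\sum_j\delta_{ji}N^j(t)-\xi^i_t)$. The jumps are then read off separately from (\ref{lambdaitaun})--(\ref{lambda3taun}). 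You instead substitute the explicit kernels of Theorem \ref{exp-kernel} and apply the finite-variation product rule to $e^{-\beta_i t}\int_0^t e^{\beta_i u}\,dN^j(u)$. That produces drift and jump part in a single pathwise identity, without separating inter-jump intervals from jump times. Both routes rest on the same consequence of Definition \ref{Markovianity}, namely the exponential form of the kernels, and both give the same drift. Yours has the advantage of making the jump coefficient explicit.

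That is exactly where the discrepancy you noticed lies, and you are right about it. (\ref{lambdaitaun}) gives the jump $\sum_j\varphi_{ji}(0)\Delta N^j(\tau_n)$ with $\varphi_{ji}(0)=\delta_{ji}+\alpha_{ji}$. The paper's final line nevertheless writes it as $\sum_j\alpha_{ji}\,dN^j(\tau_n)$, silently dropping $\delta_{ji}$ (and likewise $\delta_3$ for $\lambda^3$). So the proposition as stated holds only under (\ref{ws-stat-behav-0}), i.e.\ $\delta_{ji}=\delta_3=0$. In general the jump term must be $\sum_j\varphi_{ji}(0)\,dN^j(t)$, as you propose, and the same correction is needed in the jump sizes (\ref{jumpsizes}) of Theorem \ref{generator}.

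Your caveat on $\lambda^3$ is also warranted. The paper's proof tacitly uses $\lambda^3=\lambda^3_0+\xi^3$ as in (\ref{lambda3}) and ignores the factor $\mathbf{1}_{N(t-)>0}$ of (\ref{lambdakt}), even though (\ref{lambda3taun}) contains it. The $\lambda^3$ identity should therefore be read for $\lambda^3_0+\xi^3$.
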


\begin{proof}
Fix a $t > 0$ and a positive   $n = N^1(t) + N^2(t) + N^3(t)$.
For $h$ small enough such that $\tau_n \leq t < t + h < \tau_{n+1}$, 
by (\ref{numero1-i}) and  (\ref{numero1-3}), 
we get that
\[
d\lambda^i(t) = d\xi^i_t  
= \beta_i \left( \sum_{j=1,2}  \delta_{ji} N^j(t) - \xi^i_t \right) 
= \beta_i \left( \sum_{j=1,2}  \delta_{ji} N^j(t) - \lambda^i_t + \lambda^i_0 \right),
\quad i = 1, 2, 
\]
and 
\[
d\lambda^3(t) 
= d\xi^3_t  
= \beta_3 \left( \delta_3 N^3(t) - \xi^3_t \right)
= \beta_3 \left( \delta_3 N^3(t) 
- \lambda^3(t) + \lambda^3_0 \right).
\]
By (\ref{lambdaitaun}) and (\ref{lambda3taun}), we get the claim by noting that, for $i = 1, 2$,

$\displaystyle d\lambda^i(\tau_n) = d \xi^i_{\tau_n} =   \sum_{j=1,2}  \alpha_{ji} d N^j(\tau_n), \quad  \mbox{and} \quad   d\lambda^3(\tau_n) = d \xi^3_{\tau_n} = \alpha_3 d N^3(\tau_n).$
\end{proof}

\begin{theorem}
\label{generator}
Under Definition \ref{Markovianity}, let $\zeta:= \left(n_1, l_1, n_2, l_2, n_3, l_3 \right)$.
Let $\mu_i$ be the transition probability 
if an increment for $N^i$, $i=1,2, 3$,  takes place. 
For a suitable choice of 
$F(\zeta)$ (differentiable with respect to $l_i$), 
\citet{EK}, a representation for the 
generator $\cal G$ of the process 
$\left(N^1, \lambda^1, N^2, \lambda^2, N^3, \lambda^3 \right)$ is given by
\begin{eqnarray}
\label{Ggenerator}
{\cal G}F(\zeta) &=&  \sum_{i=1,2} \biggl\{ {\cal G}^i F(\zeta) +  l_i  \left[ F(\zeta + \Delta_i \zeta) - F(\zeta) \right] \mu_i(\zeta) \biggr\}
\\
&& \qquad \qquad 
+ {\cal G}^3 F(\zeta) +  l_3 {\bf 1}_{n_1 + n_2 - n_3 > 0} \left[ F(\zeta + \Delta_3 \zeta) - F(\zeta) \right] \mu_3(\zeta),
\nonumber
\end{eqnarray}
where, for $i = 1, 2$,
$${\cal G}^i F(\zeta)  
= \beta_i  \left(
\sum_{j=1,2}  \delta_{ji} n_j - l_i + \lambda^i_0 \right) \frac{\partial F(\zeta)}{\partial l_i},
\quad 
{\cal G}^3 F(\zeta)  = \beta_3 \left( \delta_3 n_3 - l_3 + \lambda^3_0 \right)
\frac{\partial F(\zeta)}{\partial l_3},$$
and 
\begin{equation}
\label{jumpsizes}
\Delta_1 \zeta = \left(1, \alpha_{11}, 0, \alpha_{12}, 0, 0 \right), 
\Delta_2 \zeta = \left(0, \alpha_{21}, 1, \alpha_{22}, 0, 0 \right), 
\Delta_3 \zeta = \left(0, 0, 0, 0, 1, \alpha_3 \right).
\end{equation}
Hence, 
$\left( N^1, \lambda^1, N^2,  \lambda^2, N^3, \lambda^3 \right)$ is a Markov process 
which solves the Martingale problem for $\cal G$, $MgP({\cal G})$.
\end{theorem}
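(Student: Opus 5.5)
The plan is to identify $\mathcal G$ as the generator of a piecewise-deterministic Markov process (PDMP) in the sense of Davis, and then obtain the martingale problem from Itô's formula for jump processes. First, I will show that the state vector $Z_t := (N^1(t), \lambda^1(t), N^2(t), \lambda^2(t), N^3(t), \lambda^3(t))$ has the structure of a PDMP.

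\emph{Flow.} Between jump times the counting components are frozen. By Proposition \ref{differentiable}, each $\lambda^i$ solves the linear ODE
\[
\dot l_i = \beta_i \Big(\sum_{j=1,2}\delta_{ji} n_j - l_i + \lambda^i_0\Big), \qquad i=1,2,
\]
with the analogous equation for $l_3$. The vector field of this flow is exactly the one appearing in $\mathcal G^1,\mathcal G^2,\mathcal G^3$.

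\emph{Jump rates.} The jump rates are $l_1$, $l_2$ and $l_3{\bf 1}_{n_1+n_2-n_3>0}$, by the intensity discussion in the Remark following (\ref{cardinality}).

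\emph{Jump sizes.} The post-jump positions are $\zeta+\Delta_i\zeta$, with increments read off from (\ref{lambdaitaun})--(\ref{lambda3taun}) using the kernels of Theorem \ref{exp-kernel}, so that $\varphi_{ji}(0)$ becomes $\alpha_{ji}$ (taking $\delta=0$ as in (\ref{ws-stat-behav-0})). The factor $\mu_i$ records the transition law at a jump of type $i$.

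With these three ingredients, the Markov property of $Z$ follows from the Theorem in Section \ref{MarkovSection}: by (\ref{numero1-i})--(\ref{numero1-3}) and Proposition \ref{f-exp}, $Z_t$ is a measurable function of $Z_s$ and of the increments of $(N^1,N^2,N^3)$ on $(s,t]$. The intensity of those increments depends only on $Z$ itself, and the Immersion property, assumed in Section \ref{representation}, guarantees that the compensators remain the same in the larger filtration.

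Next, I will fix $F$ bounded, with bounded continuous partial derivatives in the $l_i$. I will write $F(Z_t)-F(Z_0)$ as the sum of two parts. The first is the integral of $\sum_i \partial_{l_i}F\, d\lambda^{i,c}$ along the continuous parts. The second is the sum over jumps $\sum_{s\le t}[F(Z_s)-F(Z_{s-})]$, which I express as
\[
\sum_i \int_0^t \big[F(Z_{s-}+\Delta_i\zeta)-F(Z_{s-})\big]\, dN^i(s).
\]
Compensating each $dN^i$ by its intensity yields $\int_0^t \mathcal G F(Z_s)\,ds$ plus the stochastic integrals $\int [\cdot]\,(dN^i - \lambda^i ds)$, and these integrals are local martingales. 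Hence
\[
F(Z_t)-F(Z_0)-\int_0^t \mathcal G F(Z_s)\,ds
\]
is a local martingale.

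The main obstacle is upgrading this to a true martingale, since $\lambda^i$ is unbounded and $\mathcal G F$ contains the terms $l_i$ and $n_j$. I would first control $\mathbb E\int_0^t \lambda^i(s)\,ds$ by Theorems \ref{EBElambda} and \ref{Exlambda3}, together with the bound in Remark \ref{widesensestazionarity} under $\alpha_{ji}\le\beta_i$. Combined with the boundedness of $F$ and its derivatives, this gives integrability of the compensator and of the drift term, and a localization argument with stopping times $T_m=\inf\{t:\ \lambda^1+\lambda^2+\lambda^3>m\}\to\infty$ (non-explosion) then yields the martingale property. Finally, I will invoke \citet{EK} to conclude that $Z$ solves $MgP(\mathcal G)$, with the Markov property already established.
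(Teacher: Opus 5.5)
Your route (read off the flow, rates and jump vectors, apply the change-of-variables formula to the finite-variation process $Z$, and compensate the jump sums) is the ``standard procedure'' the paper invokes. The paper's proof is only a pointer to Theorem~7.3 of Ethier--Kurtz, so you agree with it in approach and supply more detail. Two steps need repair, however.

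\textbf{The jump vectors.} With the kernels of Theorem~\ref{exp-kernel},
\[
d\xi^i_t=\beta_i\Big(\sum_{j=1,2}\delta_{ji}N^j(t)-\xi^i_t\Big)dt+\sum_{j=1,2}(\delta_{ji}+\alpha_{ji})\,dN^j(t),
\]
so an $N^j$-event moves $l_i$ by $\varphi_{ji}(0)=\delta_{ji}+\alpha_{ji}$, not by $\alpha_{ji}$. Proposition~\ref{differentiable} contains the same slip. You keep the $\delta_{ji}n_j$ terms in the flow but set $\delta=0$ in the jumps. That is consistent only if $\delta_{ji}=\delta_3=0$ throughout, in which case $\mathcal G^iF=\beta_i(\lambda^i_0-l_i)\,\partial F/\partial l_i$, and what you have proved is the reduced generator of the Corollary following the theorem. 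For $\delta\neq0$, your computation done consistently gives (\ref{Ggenerator}) with $\alpha_{ji}$ replaced by $\delta_{ji}+\alpha_{ji}$ and $\alpha_3$ by $\delta_3+\alpha_3$ in (\ref{jumpsizes}). The formula as printed is then not the generator, so state explicitly which version you prove.

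Two smaller points of the same kind:
\begin{itemize}
\item Your compensator computation produces the jump terms with $\mu_i\equiv1$, since the post-jump state is deterministic; say so.
\item The sixth coordinate must be the unthinned $\lambda^3_0+\xi^3_t$ of (\ref{lambda3}), not $(\lambda^3_0+\xi^3_t){\bf 1}_{N(t-)>0}$ as in (\ref{lambdakt}). The latter jumps to $0$ and back whenever $N$ visits $0$, and (\ref{Ggenerator}) contains no such moves.
\end{itemize}

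\textbf{The integrability step.} It rests on a bound that is not available.
\begin{itemize}
\item The estimate $\mathbb E[\lambda^i(t)]<4(\lambda^1_0\vee\lambda^2_0)$ of Remark~\ref{widesensestazionarity} does not follow from $\alpha_{ji}\le\beta_i$. Take $\delta=0$, $\beta_1=\beta_2=\beta$ and all $\alpha_{ji}=\alpha\in(\beta/2,\beta]$. Then $\xi^1=\xi^2$ and $\frac{d}{dt}\mathbb E[\xi^1_t]=(2\alpha-\beta)\mathbb E[\xi^1_t]+\alpha(\lambda^1_0+\lambda^2_0)$, so the means grow exponentially.
\item $\alpha_{ji}\le\beta_i$ is not a hypothesis of the theorem anyway.
\item The closed forms of Theorems~\ref{EBElambda} and~\ref{Exlambda3} should not be relied on either. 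For $\psi=\alpha_3e^{-\beta_3t}$, Theorem~\ref{Exlambda3} gives the limit $\lambda^3_0(1+\alpha_3/\beta_3)$, while the exponential-kernel remark after Theorem~\ref{EBElambda} gives the correct $\lambda^3_0\beta_3/(\beta_3-\alpha_3)$.
\end{itemize}

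You only need finiteness on bounded horizons, and that holds in general. Since $\varphi_{ji}\le\delta_{ji}+\alpha_{ji}$, one has $\lambda^1+\lambda^2\le a+b(N^1+N^2)$ for constants $a,b$. Apply Gronwall to $\mathbb E[(N^1+N^2)(t\wedge\sigma_k)]$, with $\sigma_k$ the $k$-th jump time. This gives $\mathbb E[(N^1+N^2)(t)]\le a\,t\,e^{bt}$ and $\sigma_k\to\infty$ at once. The third component is handled by $\lambda^3\le\lambda^3_0+(\delta_3+\alpha_3)N^3$ and $N^3\le N^1+N^2$.

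With this, your dominated-convergence passage from the stopped martingale to the true martingale goes through. Alternatively, restrict to compactly supported $F$. Then $\mathcal GF$ is bounded, the local martingale is bounded on $[0,t]$, and the obstacle disappears.
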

The proof follows by standard procedure, see for instance Theorem 7.3 in 
\citet{EK}.

\begin{corollary}

Under Definition \ref{Markovianity} and the conditions given in Remark \ref{widesensestazionarity}, the representation for the 
generator $\cal G$ of the process 
$\left(N^1, \lambda^1, N^2, \lambda^2, N^3, \lambda^3 \right)$ given in Theorem \ref{generator} reduces to 
(\ref{Ggenerator}),  where
${\cal G}^i F(\zeta)  
= \beta_i  \left(
 \lambda^i_0 - l^i \right) \frac{\partial F(\zeta)}{\partial l^i}$, $i = 1, 2, 3$,
and the sizes of the jumps are given by 
(\ref{jumpsizes}). 
Hence, again, 
$\left( N^1, \lambda^1, N^2,  \lambda^2, N^3, \lambda^3 \right)$ is a Markov process 
which solves the Martingale problem for $\cal G$, $MgP({\cal G})$.

\end{corollary}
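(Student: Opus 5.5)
The plan is to specialise Theorem \ref{generator} to the parameters allowed by Remark \ref{widesensestazionarity}, and then check that the Markov property and the martingale-problem characterisation survive the specialisation.

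\textbf{Step 1: reduce the drift.} Impose (\ref{ws-stat-behav-0}), that is $\delta_{ji}=0$ for $i,j=1,2$ and $\delta_3=0$. The kernels in (\ref{Phik-linear-exponential}) then become pure exponentials,
\[
\varphi_{ji}(t)=\alpha_{ji}e^{-\beta_i t}, \qquad \psi(t)=\alpha_3 e^{-\beta_3 t}.
\]
The corollary following Theorem \ref{exp-kernel} gives $\gamma_j(i,s,t)=0$ and $\gamma(3,s,t)=0$. Proposition \ref{differentiable} then yields
\[
d\lambda^i(t)=\beta_i(\lambda^i_0-\lambda^i(t))\,dt+\sum_{j}\alpha_{ji}\,dN^j(t), \qquad i=1,2,
\]
and the analogous equation for $\lambda^3$. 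Between jumps the flow of $(N,\lambda)$ is governed by the vector field $\beta_i(\lambda^i_0-l_i)\partial_{l_i}$. This vector field no longer depends on the counts $n_j$. Consequently the operators ${\cal G}^iF(\zeta)$ of Theorem \ref{generator} reduce to $\beta_i(\lambda^i_0-l_i)\,\partial F/\partial l_i$.

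\textbf{Step 2: check the jump part.} The jump part is untouched. The jump sizes at a jump time are $\varphi_{ji}(0)=\alpha_{ji}$ and $\psi(0)=\alpha_3$, which is exactly (\ref{jumpsizes}). The jump rates are still $l_1$, $l_2$ and $l_3{\bf 1}_{n_1+n_2-n_3>0}$, in accordance with (\ref{lambdakt}).

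\textbf{Step 3: identify the generator.} For $F$ bounded, with bounded continuous derivatives in $l_i$, write $F(\zeta_t)-F(\zeta_0)$ as the sum of the deterministic-flow contributions between consecutive $\tau_k$ and the jump contributions at the $\tau_k$. Compensate each jump sum by its intensity, using the definition of intensity recalled from \citet{Bremaud1981} in the proof of Proposition \ref{systemODEBlambda}. The result is that $F(\zeta_t)-F(\zeta_0)-\int_0^t{\cal G}F(\zeta_s)\,ds$ is a local martingale, and in fact a martingale. Hence the process solves $MgP({\cal G})$.

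\textbf{Step 4: uniqueness and the Markov property.} The process is piecewise deterministic with locally bounded jump rates, and Theorem 7.3 of \citet{EK} invoked before gives uniqueness. Uniqueness of the martingale problem then gives the Markov property, as in Theorem \ref{generator}.

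The main obstacle is the true-martingale and non-explosion requirement needed for well-posedness. The jump rates $l_i$ are unbounded, so a priori the compensated sums are only local martingales. To handle this I would use $\alpha_{ji}\le\beta_i$ and $\alpha_3\le\beta_3$ from (\ref{ws-stat-behav-<}), together with the bounds $\mathbb{E}[\lambda^i(t)]<4(\lambda^1_0\vee\lambda^2_0)$ and $\mathbb{E}[\lambda^3(t)]<2\lambda^3_0$ from Remark \ref{widesensestazionarity}. These bounds give $\mathbb{E}[N^i(t)]<\infty$, so the number of jumps is finite on compacts. One can then localise at the stopping times $\tau_k$ and pass to the limit by dominated convergence.
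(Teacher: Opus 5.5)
Your proposal is correct and matches the paper, which obtains the corollary simply by setting $\delta_{ji}=\delta_3=0$ in Theorem \ref{generator}, exactly as in your Steps 1--2, with the Markov and martingale-problem conclusions inherited from that theorem. One caution on your extra non-explosion step: the bounds of Remark \ref{widesensestazionarity} do not actually follow from $\alpha_{ji}\le\beta_i$, $\alpha_3\le\beta_3$ (by (\ref{ODExLambdaB-linearH-final}), $\mathbb{E}[\lambda(t)]\to\beta\lambda_0/(\beta-\alpha)>2\lambda_0$ once $\alpha>\beta/2$), so instead use $\lambda^i\le\lambda^i_0+\max_j\alpha_{ji}\,(N^1+N^2)$, $\lambda^3\le\lambda^3_0+\alpha_3N^3$ and Gronwall on the processes stopped at $\tau_k$, which gives $\mathbb{E}[N^i(t)]<\infty$ for every finite $t$ with no condition on $\alpha/\beta$.
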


\section{Stability}
\label{stability}
Recall that, (\ref{cardinality}),
the model studied is a simple queueing process $N(t)$, and its size is given by 
$N(t) = N^1(t) + N^2(t)  - N^3(t)$, a process 
having  intensity $\lambda^1(t) + \lambda^2(t) - \lambda^3(t)$, 
and taking non-negative integer values. This process $\{N(t) : t \geq 0\}$ is the continuous time interpolation 
of the discrete time process $\{N(\tau_n) : n \geq 0\}$. 
Before giving the proof of Theorem \ref{T1-Rahul}, some preliminaries are needed.
\begin{lemma}
\label{EintensitiesENi}
Under Definition \ref{Markovianity},  which implies the Markov property of the system, and taking into account the observations of Remark 
\ref{widesensestazionarity}, we get 
$\varphi_{ij}(t) =  \alpha_{ij} e^{-\beta_i t}$, for $i,j = 1, 2$, and $\psi(t) =  \alpha_3 e^{-\beta_3 t}$. 
For $i = 1, 2$ and $i \neq j$, setting
\begin{equation}
\label{Aij-setting}
A(i,j) =  \frac{\lambda^i_0 }{\beta_i} \cdot \frac{\alpha_{ij} \alpha_{ji} -  \alpha_{ii} \alpha_{jj}}{\beta_i-\beta_j}  
-\frac{\lambda^i_0 }{\beta_i} \alpha_{ii} -  \frac{\lambda^j_0 }{\beta_i} \alpha_{ji} 
\qquad 
B(i,j) =  \frac{\lambda^i_0}{\beta_j} \cdot \frac{\alpha_{ii} \alpha_{jj} - \alpha_{ij} \alpha_{ji} }{\beta_i-\beta_j},
\end{equation}
and 
$$C(i,j) = \lambda^i_0  - A(i,j) - B(i,j),$$
by Theorem \ref{EBElambda} and Theorem \ref{Exlambda3}, the expected number of births, the expected number of deaths and the corresponding expected number of intensities reduce to
\begin{equation}
\label{E-lambda1}
\mathbb{E} [ \lambda^i(t) ] =  C(i,j)  + 
A(i,j) e^{-\beta_i t} + B(i,j) e^{-\beta_j t},
\end{equation}
$$\mathbb{E} [ N^i(t) ] = \int_0^t \mathbb{E} [ \lambda^i(s) ] \ ds  
= C(i,j) t  + 
A(i,j) \frac{1-e^{-\beta_i t}}{\beta_i} + B(i,j) \frac{1-e^{-\beta_j t}}{\beta_i},$$
\begin{eqnarray}
\label{E-lambda3}
\mathbb{E} [ \lambda^3(t) ] 
&=& \lambda^3_0 
+ \lambda^3_0 (1 - e^{- \beta_3 t}) \frac{\alpha_3}{\beta_3},
\\
\mathbb{E} [ N^3(t) ] 
&=& \lambda^3_0 \left( 1 + \frac{\alpha_3}{\beta_3} \right) t 
+ \lambda^3_0 ( e^{- \beta_3 t} - 1) \frac{\alpha_3}{(\beta_3)^2}.
\nonumber
\end{eqnarray}
\end{lemma}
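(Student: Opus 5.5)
The plan is to substitute the exponential kernels directly into the closed-form expressions of Theorem \ref{EBElambda} and Theorem \ref{Exlambda3}, and then regroup the result into constant and exponential parts. First, Remark \ref{widesensestazionarity} forces $\delta_{ji}=0$ and $\delta_3=0$. Combined with Theorem \ref{exp-kernel}, this leaves only the pure exponential kernels $\alpha e^{-\beta t}$, which is the first claim of the lemma. From here on the work is computational.

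\textbf{The death part.} I would start with the easier case. Theorem \ref{Exlambda3} gives $\mathbb{E}[\lambda^3(t)]=\lambda^3_0+\lambda^3_0\int_0^t\alpha_3 e^{-\beta_3 v}\,dv$, which is immediately (\ref{E-lambda3}). Integrating once more in $t$ gives $\mathbb{E}[N^3(t)]=\lambda^3_0(1+\alpha_3/\beta_3)t-\lambda^3_0\frac{\alpha_3}{\beta_3^2}(1-e^{-\beta_3 t})$, which matches the stated formula.

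\textbf{The birth part.} For $i=1,2$ I would evaluate the three integrals in Theorem \ref{EBElambda} one at a time.
\begin{enumerate}
\item The single integrals $\lambda^i_0\int_0^t\varphi_{ii}$ and $\lambda^j_0\int_0^t\varphi_{ji}$ produce terms of the form constant $\times(1-e^{-\beta t})$.
\item The double integral contains the convolution $\int_0^s\varphi_{ij}(s-u)\varphi_{ji}(u)\,du$. Since the two factors carry rates $\beta_i$ and $\beta_j$, this is a convolution of two exponentials with distinct rates, $\int_0^s e^{-\beta_a(s-u)}e^{-\beta_b u}\,du=\frac{e^{-\beta_b s}-e^{-\beta_a s}}{\beta_a-\beta_b}$. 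The same holds for the term $\varphi_{ii}*\varphi_{jj}$.
\item Integrating once more in $s$ yields a constant plus multiples of $e^{-\beta_i t}$ and $e^{-\beta_j t}$. The factor $\frac{1}{\beta_i-\beta_j}$ appears, along with the combination $\alpha_{ij}\alpha_{ji}-\alpha_{ii}\alpha_{jj}$.
\end{enumerate}
Next I would collect the coefficient of $e^{-\beta_i t}$ and call it $A(i,j)$, and collect the coefficient of $e^{-\beta_j t}$ and call it $B(i,j)$. The remaining constant equals $\lambda^i_0-A(i,j)-B(i,j)$, because at $t=0$ every integral vanishes and so $\mathbb{E}[\lambda^i(0)]=\lambda^i_0$. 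This consistency check determines $C(i,j)$ without computing it directly, which gives (\ref{E-lambda1}). Finally, $\mathbb{E}[N^i(t)]=\int_0^t\mathbb{E}[\lambda^i(s)]\,ds$ follows by integrating the exponentials term by term.

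\textbf{Expected obstacle.} The hard step is the bookkeeping of which decay rate belongs to which kernel. Under Theorem \ref{exp-kernel} the rate is indexed by the target $i$ (writing $\varphi_{ji}$ with $e^{-\beta_i t}$), while the lemma writes $\varphi_{ij}=\alpha_{ij}e^{-\beta_i t}$. I would fix one convention, decide which convolutions actually mix $\beta_i$ and $\beta_j$, and then track the signs and denominators so that the coefficients land exactly in the form (\ref{Aij-setting}). I would also assume $\beta_i\neq\beta_j$, as the stated formulas implicitly require; the equal-rate case would instead produce $te^{-\beta t}$ terms.
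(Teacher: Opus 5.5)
Your plan matches the paper's proof: substitute the pure exponential kernels into Theorems~\ref{EBElambda} and~\ref{Exlambda3}, evaluate the convolution of two exponentials, integrate in $s$, and read off the coefficients of $e^{-\beta_i t}$ and $e^{-\beta_j t}$, with the constant fixed by the value $\lambda^i_0$ at $t=0$. On the obstacle you flagged, the computation must use the convention of Theorem~\ref{exp-kernel}, $\varphi_{ji}(t)=\alpha_{ji}e^{-\beta_i t}$, as the paper's proof does: the convolution $\varphi_{ij}*\varphi_{ji}$ is the same under either convention, but only this one puts $-\lambda^j_0\alpha_{ji}/\beta_i$ into $A(i,j)$; also, your term-by-term integration correctly gives $B(i,j)(1-e^{-\beta_j t})/\beta_j$, so the $\beta_i$ in that denominator of the stated $\mathbb{E}[N^i(t)]$ is a typo.
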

\begin{proof}
Recalling that, in the general case,
\begin{eqnarray}
\mathbb{E} [ \lambda^i(t) ] &=&  \lambda^i_0  \int_0^t \int_0^s
\left[ \varphi_{ij}(s-u) \varphi_{ji}(u)
-\varphi_{ii}(s-u) 
\varphi_{jj}(u) 
\right] \ du  \ ds
\nonumber\\
&&+ \lambda^i_0 
+ \lambda^i_0  \int_0^t \varphi_{ii}(s) \ ds 
+ \lambda^j_0  \int_0^t \varphi_{ji}(s) \ ds,
\nonumber
\end{eqnarray}
substituting, successively we get that
\begin{eqnarray}
\varphi_{ij}(s-u)  \varphi_{ji}(u)
-\varphi_{ii}(s-u) 
\varphi_{jj}(u) 
&=&  
\alpha_{ij} \alpha_{ji} e^{-\beta_j s}  e^{(\beta_j-\beta_i) u}
- \alpha_{ii} \alpha_{jj} e^{-\beta_i s } e^{(\beta_i -\beta_j) u},
\nonumber
\end{eqnarray}
and
\begin{eqnarray}
&&\int_0^s \left[ \varphi_{ij}(s-u)  \varphi_{ji}(u)
-\varphi_{ii}(s-u) 
\varphi_{jj}(u)  \right] \ du 
\nonumber\\
&&\hspace{1 in} = 
\alpha_{ij} \alpha_{ji} e^{-\beta_j s}  \frac{e^{(\beta_j-\beta_i) s} - 1}{\beta_j-\beta_i}
- \alpha_{ii} \alpha_{jj} e^{-\beta_i s } \frac{e^{(\beta_i-\beta_j) s} - 1}{\beta_i-\beta_j}
\nonumber\\
&&\hspace{1 in} = 
\frac{\alpha_{ij} \alpha_{ji} -  \alpha_{ii} \alpha_{jj}}{\beta_j-\beta_i} e^{-\beta_i s} 
+ \frac{\alpha_{ii} \alpha_{jj} - \alpha_{ij} \alpha_{ji} }{\beta_j-\beta_i} e^{-\beta_j s}.
\nonumber
\end{eqnarray}
Therefore, 
\begin{eqnarray}
&&\mathbb{E} [ \lambda^i(t) ] 
=  \lambda^i_0  \int_0^t 
\left[  \frac{\alpha_{ij} \alpha_{ji} -  \alpha_{ii} \alpha_{jj}}{\beta_j-\beta_i} e^{-\beta_i s} 
+ \frac{\alpha_{ii} \alpha_{jj} - \alpha_{ij} \alpha_{ji} }{\beta_j-\beta_i} e^{-\beta_j s}
\right]   \ ds
\nonumber\\
&& \hspace{2 in}
+ \lambda^i_0  + \lambda^i_0  \int_0^t \alpha_{ii} e^{-\beta_i s}  \ ds + \lambda^j_0  \int_0^t \alpha_{ji} e^{-\beta_i s} \ ds
\nonumber\\
&&=  \lambda^i_0  + \left(\lambda^i_0  \left(\alpha_{ii} + \frac{\alpha_{ij} \alpha_{ji} -  \alpha_{ii} \alpha_{jj}}{\beta_j-\beta_i} \right) + \lambda^j_0 \alpha_{ji} \right)\frac{1-e^{-\beta_i t}}{\beta_i}  
+ \lambda^i_0 \frac{\alpha_{ii} \alpha_{jj} - \alpha_{ij} \alpha_{ji} }{\beta_j-\beta_i} \frac{1-e^{-\beta_j t}}{\beta_j}.  
\nonumber
\end{eqnarray}
\end{proof}
Note that in the Poisson case ($\alpha_{ij} = 0$ and $\alpha_3 =0$),  
$\mathbb{E} [ \lambda^i(t) ] = \lambda^i_0$, for $i=1,2,3$.
\begin{remark}
\label{stability+Markov}
The stability of the system (see Remark \ref{general-stability}) implies that
$$\mathbb{E} [ \lambda^3(t) ] > \mathbb{E} [ \lambda^1(t) ] + \mathbb{E} [ \lambda^2(t) ],$$ 
and, in the general case, this condition is given by 
(\ref{stab-cond}).
In the Markov case, under the hypothesis of Lemma \ref{EintensitiesENi}, 
and under (\ref{ws-stat-behav-0}) ($\delta_{ij}=0$, $i,j =1,2$, and $\delta_3=0$), the stability condition requires that
$$\lambda^3_0 
+ \lambda^3_0 (1 - e^{- \beta_3 t}) \frac{\alpha_3}{\beta_3} 
> C(1,2)  + C(2,1) + 
[A(1,2) + B(2,1)]e^{-\beta_1 t} + [ A(2,1) + B(1,2) ] e^{-\beta_2 t},$$
where
\begin{eqnarray}
A(1,2) + B(2,1) 
&=& \frac{\lambda^1_0 }{\beta_1} \cdot \frac{\alpha_{12} \alpha_{21} -  \alpha_{11} \alpha_{22}}{\beta_1-\beta_2}  
-\frac{\lambda^1_0 }{\beta_1} \alpha_{11} -  \frac{\lambda^2_0 }{\beta_1} \alpha_{21} 
+ \frac{\lambda^2_0}{\beta_1} \cdot \frac{\alpha_{22} \alpha_{11} - \alpha_{21} \alpha_{12} }{\beta_2-\beta_1}
\nonumber\\
&=&
 \frac{(\lambda^1_0 + \lambda^2_0 ) (\alpha_{12} \alpha_{21} - \alpha_{11} \alpha_{22} ) }{\beta_1(\beta_1-\beta_2)}  
-\frac{\lambda^1_0 \alpha_{11} }{\beta_1}  -  \frac{\lambda^2_0 \alpha_{21} }{\beta_1},
\nonumber
\end{eqnarray}
$$A(2,1) + B(1,2) = \frac{(\lambda^1_0 + \lambda^2_0 ) (\alpha_{12} \alpha_{21} - \alpha_{11} \alpha_{22} ) }{\beta_2(\beta_2-\beta_1)}  
-\frac{\lambda^2_0 \alpha_{22} }{\beta_2}  -  \frac{\lambda^1_0 \alpha_{12} }{\beta_2},$$
\begin{eqnarray}
C(1,2)  + C(2,1) 
&=& \lambda^1_0  - A(1,2) - B(1,2) + \lambda^2_0  - A(2,1) - B(2,1)
\nonumber\\
&=& \lambda^1_0  + \lambda^2_0  -  \frac{(\lambda^1_0 + \lambda^2_0 ) (\alpha_{12} \alpha_{21} - \alpha_{11} \alpha_{22} ) }{\beta_1(\beta_1-\beta_2)}  
+\frac{\lambda^1_0 \alpha_{11} + \lambda^2_0 \alpha_{21} }{\beta_1}  
\nonumber\\
&& \qquad - \frac{(\lambda^1_0 + \lambda^2_0 ) (\alpha_{12} \alpha_{21} - \alpha_{11} \alpha_{22} ) }{\beta_2(\beta_2-\beta_1)}  
+\frac{\lambda^2_0 \alpha_{22} + \lambda^1_0 \alpha_{12}  }{\beta_2}.
\nonumber
\end{eqnarray}
\end{remark}

\section{A virus-like evolving population}
\label{Rahul-model}

Let $\{\tau_n : n \geq 0\}$ be  the jump time of $(N^1, N^2, N^3)$, obtained by merging  the jump times of each $N^i$, $i=1,2,3$, where $N^1$ counts the number of births of the mutants, $N^2$ counts the number of births of the non-mutants, and $N^3$ counts the number of deaths. 
At $\tau_0=0$, there is no  individual at site $0$. Also, at $\tau_1$, there is necessarily a birth of a mutant. At the jump time $\tau_n$, $n \geq 2$, there is either a birth of a mutant, a birth of a non-mutant or a death of an individual from the existing population.

In case of a birth, there are two possibilities: 
a \textit{mutant} is born, which means that there exists a value $k$ such that $\tau_n = t^1_k$,  namely the $n$-th jump time of $N$ correspond to the $k$-th birth time of a mutant. Its fitness parameter
$f$ is a continuous uniformly random variable taking value  in $[0, 1]$. 
Otherwise, a \textit{non mutant} is born, which means that there exists a value $k$ such that $\tau_n = t^2_k$,  namely the $n$-th jump time of $N$ correspond to the $k$-th birth time of a non mutant individual. Its fitness parameter
$f$ is chosen with a probability proportional to the number of individuals with fitness $f$ among the entire population present at that time. Here we have a caveat that, if there is no individual present at the time of birth, then the fitness of the individual is sampled uniformly in $[0, 1]$.

In case of a death, an individual from the population at the site closest to $0$ is eliminated.

For a mutant, the fitness parameter cannot match, almost surely, any of the fitness parameters already existing in the population. 
Here and henceforth, a site represents a fitness level. 
The total population
at time $t$ is divided in exactly $l_t$ sites $x_1, \ldots , x_{l_t}$, with the size of the population at site $x_i$ being exactly $k^i_t$. 
To describe the  partition of the population at time $t$, let
$X_t = \{(k^i_t, x_i) : k^i_t \geq 1, x_i \in [0,1], i = 1, \ldots, l_t\}$. 
The couple $(k^i_t, x_i)$ belongs to the partition $X_t$ only if $k^i_t \geq 0$.
For example in Figure 1, there is an empty circle corresponding to the fitness parameter $x_5$, which means that all the individuals belonging to that specie have died by time $t$, 
therefore the couple $(0, x_5) \notin X_t$. In case there is no individual present at time $t$ in the entire population, we take $X_t = \emptyset$. 
Hence, $X_t$ takes values on the state space
$\mathbb{X} = \{ \emptyset \} \cup 
\left\{ \{ (k, x) \}_{x \in \Lambda} : (k, x) \in \mathbb{N} \times [0,1], \ \# \Lambda < \infty \right\}$.

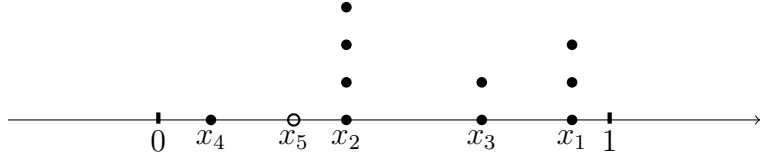
\begin{figure}
\centering
\begin{tikzpicture}
\draw [->] (-2,0) - - (8,0);
\coordinate [label=below:$0$] (0) at (0,0);
\coordinate  [label=below:$1$] (1) at (6,0);
\coordinate  [label=below:$x_1$] (p11) at (5.5,0);
\coordinate  [label=below:$x_2$] (p21) at (2.5,0);
\coordinate  [label=below:$x_3$] (p31) at (4.3,0);
\coordinate  [label=below:$x_4$] (p41) at (0.7,0);
\coordinate  [label=below:$x_5$] (p51) at (1.8,0);
\coordinate  (p12) at (5.5,0.5);
\coordinate  (p13) at (5.5,1);
\coordinate  (p22) at (2.5,0.5);
\coordinate  (p23) at (2.5,1);
\coordinate  (p24) at (2.5,1.5);
\coordinate  (p32) at (4.3,0.5);
\draw[ultra thick] (0, -0.05) -- (0,0.1);
\draw[ultra thick] (6, -0.05) -- (6,0.1);
\draw[thick] (1.8, 0) circle (0.075);
\fill (p11) circle (2pt) (p21) circle (2pt) (p31) circle (2pt) (p41) circle (2pt);
\fill (p32) circle (2pt) 
(p22) circle (2pt) (p23) circle (2pt) (p24) circle (2pt) 
(p12) circle (2pt) (p13) circle (2pt);
\end{tikzpicture}
\caption{A partition of the population at time $t$ is $X_t= \{(1,x_4), (4,x_2),(2,x_3),(3,x_1)\}$}
\label{Fig.1}
\end{figure}


For a given constant $f \in (0, 1)$, at time $t$, let $L^f(t)$ be the size of the population with fitness  smaller than $f$, and let $R^f(t)$ denote the size of the population with fitness  larger than $f$, respectively having the representations given by
\begin{equation}
\label{Lft-Rft}
L^f(t) := \sum_{i = 1}^{l_t} k^i_t \ {\bf 1}_{x_i \in [0, f]}  {\bf 1}_{(k^i_t, x_i) \in X_t},
\qquad R^f(t) := \sum_{i = 1}^{l_t} k^i_t \ {\bf 1}_{x_i \in (f, 1]}  {\bf 1}_{(k^i_t, x_i) \in X_t}.
\end{equation}
Clearly, $L^f(t) + R^f(t) := N(t)$ almost surely, where 
$N(t)$ denotes the size of the entire population at time $t$. 
The couple $\left(L^f(t), R^f(t)\right)$ takes values on $\mathbb{Z}^+ \times \mathbb{Z}^+$, 
($\mathbb{Z}^+ = \mathbb{N} \cup \{0\}$), and its dynamics is driven by 
$\left( N^1, \lambda^1, N^2,  \lambda^2, N^3, \lambda^3 \right)$. 
Hence, the model given by  $\left(L^f(t), R^f(t)\right)$ admits jumps at times $\{\tau_n\}_{n > 0}$, 
while, between the jump times, 
it remains constant, i.e.
$\left(L^f(t), R^f(t)\right) = \left(L^f(\tau_n), R^f(\tau_n)\right)$, for $\tau_n \leq t < \tau_{n+1}$.

Assuming that the structure of the Hawkes processes $\left( N^1, \lambda^1, N^2,  \lambda^2, N^3, \lambda^3 \right)$ are such that 
the model $\left(L^f(t), R^f(t)\right)$ satisfies the Markov property, (see Section \ref{MarkovSection}), at each jump time the transition probabilities are given by
\begin{enumerate}
\item
If $\left(L^f(t), R^f(t)\right) = (0,0)$
$$\left(L^f(t+h), R^f(t+h)\right) = \left\{ 
\begin{array}l
(1,0) \qquad w. p. \qquad f  \lambda^1(t)  h + o(h)
\\
(0,1) \qquad w.  p. \qquad (1-f) \lambda^1(t)  h + o(h)
\\
(0,0) \qquad w.  p. \qquad o(h)
\end{array}
\right.$$
\item 
If $\left(L^f(t), R^f(t)\right)  \in \{0\} \times \mathbb{N}$
$$\hspace{-.15 in} \left(L^f(t+h), R^f(t+h)\right)  = \left\{ 
\begin{array}l
\left(1,R^f(t)\right) \hspace{.25 in} w.  p. \  f  \lambda^1(t) h + o(h)
\\
\left(0,R^f(t) + 1 \right) 
 w.  p. \ 
[ (1-f) \lambda^1(t) + \lambda^2(t) ] h + o(h)
\\
(0,R^f(t) - 1) \ w.  p. \  \lambda^3(t) h + o(h)
\end{array}
\right.$$
\item 
If $\left(L^f(t), R^f(t)\right) \in  \mathbb{N} \times \{0\}$
$$\left(L^f(t+h), R^f(t+h)\right)  = \left\{ 
\begin{array}l
\left(L^f(t)+1, 0\right) \quad  w.  p. \quad [ f \lambda^1(t) + \lambda^2(t)] h + o(h)
\\
(L^f(t), 1) \hspace{.45 in} w.  p. \quad (1-f) \lambda^1(t) h + o(h)
\\
(L^f(t) - 1,0) \hspace{.2 in} w.  p. \quad \lambda^3(t)  h + o(h)
\end{array}
\right.$$
\item 
If $\left(L^f(t), R^f(t)\right) \in  \mathbb{N} \times \mathbb{N}$
$$\left(L^f(t+h), R^f(t+h)\right)  = \left\{ 
\begin{array}l
\left(L^f(t)+1, R^f(t)\right) \quad  w. p. 
\\
\hspace{.6 in}  
\left(f  \lambda^1(t) + \lambda^2(t) \frac{L^f(t)}{N(t)} \right) h + o(h)
\\
\left(L^f(t), R^f(t) +1\right) \quad  w.  p. 
\\
\hspace{,6 in} \left( (1-f) \lambda^1(t)
+ \lambda^2(t) \frac{R^f(t)}{N(t)} \right) h + o(h)
\\
\left(L^f(t) - 1, R^f(t)\right) 
\hspace{.15 in} w.  p. \quad \lambda^3(t) h + o(h).
\end{array}
\right.$$
\end{enumerate}

\section{Convergence results for a virus-like evolving population}
\label{model-convergences}

Some convergence results for the model introduced in Section \ref{Rahul-model} are derived below.
\begin{theorem}
\label{T1-Rahul}
Assuming that 
\begin{equation}
\label{fc}
f_c := \lim_{t \rightarrow \infty} f_c(t) 
:= \lim_{t \rightarrow \infty} \frac{\mathbb{E}[\lambda^3(t)]}{\mathbb{E}[\lambda^1(t)]} 
\end{equation}
exists, 
there is a phase transition at a critical fitness level $f_c$, 
in the sense that
\begin{enumerate}
\item
In case 
$\displaystyle \liminf_{t \rightarrow + \infty} \frac{\mathbb{E}[\lambda^3(t)]}{\mathbb{E}[\lambda^1(t) + \lambda^2(t)]} \geq 1$, 
the probability that a birth happens is less likely than a death, then
$\displaystyle P \left( \liminf_{\tau \rightarrow + \infty}  \frac{ l\left\{ t \in [0,\tau]: N(t) = 0 \right\} }{\tau}> 0 \right) = 1$, 
where $l$ is the Lebesgue measure.
\item
In case $f_c \leq 1$ $(^{*})$, \footnote{$(^{*})$ Since $\mathbb{E}[\lambda^2(t)] \geq 0$, 
the condition $f_c := \lim_{t \rightarrow \infty}\frac{\mathbb{E}[\lambda^3(t)]}{\mathbb{E}[\lambda^1(t)]} \leq 1$, obviously, implies that $\limsup_{t \rightarrow \infty}\frac{\mathbb{E}[\lambda^3(t)]}{\mathbb{E}[\lambda^1(t) + \lambda^2(t)]} \leq 1$.}
$N(t)$ goes to infinity as $t \rightarrow \infty$, 
and most of the population is distributed at sites in the interval $[f_c,1]$, in the sense that, for $f > f_c$,
$$P\left(\lim_{t \rightarrow \infty} \frac{R^{f_c}(t)}{N(t)} =1 \right) = 1
\qquad \mbox{and} \qquad P\left(\liminf_{t \rightarrow \infty} \frac{R^{f_c}(t)- R^f(t)}{N(t)} > 0 \right) =1.$$
\item
In case $\displaystyle \limsup_{t \rightarrow + \infty} \frac{\mathbb{E}[\lambda^3(t)]}{\mathbb{E}[\lambda^1(t) + \lambda^2(t)]} < 1$, 
but $f_c \geq 1$,  
$N(t)$ goes to infinity as $t \rightarrow \infty$, 
and most of the population is concentrated at site near $1$, in the sense that
$$P\left(\lim_{t \rightarrow \infty} N(t) = \infty \right) =1
\qquad  \mbox{and} \qquad  
P\left(\lim_{t \rightarrow \infty} \frac{R^{1 - \epsilon}(t)}{N(t)} =1 \right) = 1,\quad \mbox{for} \ \epsilon > 0.$$
\end{enumerate}
\end{theorem}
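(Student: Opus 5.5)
The plan follows the discrete-time argument of \citet{RoyTanemura} for the Poisson/Bernoulli case. The idea is to compare the continuous-time process $\left(L^f(t),R^f(t)\right)$ with an embedded random walk observed at the jump times $\{\tau_n\}$. Under Definition \ref{Markovianity}, together with (\ref{ws-stat-behav-0}) and (\ref{ws-stat-behav-<}), the sextuple $\left(N^1,\lambda^1,N^2,\lambda^2,N^3,\lambda^3\right)$ is Markov with generator (\ref{Ggenerator}). By Lemma \ref{EintensitiesENi}, each $\mathbb{E}[\lambda^i(t)]$ converges exponentially fast to a finite constant ($C(1,2)$, $C(2,1)$, or $\lambda^3_0(1+\alpha_3/\beta_3)$). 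I will first upgrade this to a pathwise ergodic statement. Since $\lambda^i-\lambda^i_0$ is a shot noise with exponential decay driven by a nonexplosive Hawkes process, I will show that
\[
\frac{1}{t}\int_0^t \lambda^i(s)\,ds \rightarrow \bar\lambda^i:=\lim_{t\to\infty}\mathbb{E}[\lambda^i(t)] \quad \mbox{a.s.}
\]
I would do this via the ergodicity of the Markov intensity process (a Lyapunov function $V(\zeta)=l_1+l_2+l_3$ gives geometric drift under $\alpha_{ji}\le\beta_i$). The martingale $M^i(t)=N^i(t)-\int_0^t\lambda^i$ then satisfies $M^i(t)/t\to0$ by the strong law for martingales, so $N^i(t)/t\to\bar\lambda^i$ a.s. Consequently the fraction of jumps of each type converges, which gives an effective embedded chain with birth-of-mutant, birth-of-non-mutant and death probabilities proportional to $\bar\lambda^1,\bar\lambda^2,\bar\lambda^3$. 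In the notation of (\ref{fc}), $f_c=\bar\lambda^3/\bar\lambda^1$.

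For part 1, $\bar\lambda^3\ge\bar\lambda^1+\bar\lambda^2$, so $N$ is a queue whose long-run input rate does not exceed its output rate. I will compare $N$ with a birth-death process and show that the Markov process $(N,\lambda^1,\lambda^2,\lambda^3)$ returns to $\{N=0\}$ with integrable excursion times, at least in the strictly subcritical case. The time spent at $0$ has positive density because each visit lasts an exponential time with rate $\lambda^1\ge\lambda^1_0$ bounded away from zero. The renewal-reward theorem then gives a positive liminf of the occupation fraction. The boundary case of equality is delicate, since null recurrence would give a zero fraction. Here I would try to exploit the exact form of the stated inequality; if that fails, I would restrict the claim to strict inequality.

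For parts 2 and 3, $N(t)\to\infty$ follows from $N(t)/t\to\bar\lambda^1+\bar\lambda^2-\bar\lambda^3>0$. Then I analyse $L^f$ using the transition rates in case 4. When $L^f>0$, deaths hit $L^f$ at rate $\lambda^3$ and increase it at rate $f\lambda^1+\lambda^2L^f/N$. If $f<f_c$, then $f\bar\lambda^1<\bar\lambda^3$, so as long as $L^f/N$ is small the drift of $L^f$ is negative. A comparison with a recurrent reflected walk gives $L^f(t)=o(t)$, hence $L^f/N\to0$ and $R^{f_c}/N\to1$ (letting $f\uparrow f_c$). For $f>f_c$, the process $L^f$ is eventually never emptied. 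After $L^{f_c}$ is exhausted, deaths only remove individuals from sites below $f_c$, so individuals with fitness in $(f_c,f]$ accumulate at the positive rate $(f-f_c)\bar\lambda^1$, together with the proportional non-mutant births. This yields $\liminf (R^{f_c}-R^f)/N>0$. Part 3 is the case $f_c\ge1$: every $L^{1-\epsilon}$ has negative drift, so the same argument gives $R^{1-\epsilon}/N\to1$.

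The main obstacle is the preferential (Pólya-urn-like) term $\lambda^2L^f/N$ combined with the random intensities. To control it, I would dominate $L^f$ by an urn-type process whose non-mutant reinforcement is bounded by $\lambda^2\,\epsilon$ on the event $\{L^f/N<\epsilon\}$. I would then use the time-averaged convergence of $\lambda^i$ from the first step in place of constant rates, via a random time change $t\mapsto\int_0^t(\lambda^1+\lambda^2+\lambda^3)$.
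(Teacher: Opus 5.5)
Your infrastructure (pathwise time averages of the intensities, the martingale SLLN for $N^i(t)/t$, comparison chains with frozen reinforcement) is a reasonable substitute for the paper's informal law-of-large-numbers step. Your caveat on Part 1 is also correct. In the Poisson case with $\lambda^3_0=\lambda^1_0+\lambda^2_0$ the queue is a null-recurrent $M/M/1$ queue, and the fraction of time at $0$ tends to $0$ a.s. The paper's appeal to queues ``with mean arrival rates less than mean service rates'' likewise covers only the strict case.

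The genuine gap is in Parts 2 and 3, and it is circular. Write $\Lambda_i=\bar\lambda^i$. You say that ``as long as $L^f/N$ is small the drift of $L^f$ is negative,'' and you dominate $L^f$ only ``on the event $\{L^f/N<\epsilon\}$.'' Nothing in your plan ever brings the process into that event for $\epsilon$ below the threshold $(f_c-f)\Lambda_1/\Lambda_2$, where the drift $f\Lambda_1+\epsilon\Lambda_2-\Lambda_3$ turns negative. From the only a priori bound $\epsilon=1$, the comparison chain can have positive drift $f\Lambda_1+\Lambda_2-\Lambda_3$, so a single comparison does not give $L^f(t)=o(t)$. The missing idea is the bootstrap in the proof of Lemma \ref{Lemma4-Rahul}:
\begin{itemize}
\item the $\epsilon$-chains are monotone in $\epsilon$ and dominate the true chain once $\rho^f_t\le\epsilon$ eventually;
\item along an $\epsilon$-chain, $\rho^f_t(\epsilon)\to h(\epsilon)$ with $h(x)=(f\Lambda_1-\Lambda_3+\Lambda_2x)/(\Lambda_1+\Lambda_2-\Lambda_3)$;
\item $h(x)<x$ on $[0,1]$ exactly when $f<f_c\wedge 1$, so iterating $\epsilon\mapsto h(\epsilon+\delta)$ over a dyadic grid pushes $\limsup_t\rho^f_t$ below any $\eta$ in finitely many rounds.
\end{itemize}

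The same omission breaks your passage to $R^{f_c}/N\to1$ ``letting $f\uparrow f_c$.'' Knowing $L^f/N\to0$ for each fixed $f<f_c$ says nothing about the mass in $(f,f_c]$, because the limits in $t$ and in $f$ cannot be interchanged. Take the Poisson case $\lambda^1_0=\lambda^2_0=\lambda^3_0$. Then $f_c=1$, $N\to\infty$, and $L^f/N\to0$ for all $f<1$, yet $R^{1}(t)\equiv0$. So Part 2 is false at $f_c=1$, your argument would ``prove'' it, and Part 2 needs $f_c<1$ as in Corollary \ref{Cor2-Rahul}.

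For $f_c<1$ the bound must come from above. For $f'\in(f_c,1)$ the same iteration converges to the fixed point of $h$, giving $\limsup_t L^{f'}(t)/N(t)\le (f'-f_c)/(1-f_c)$, which tends to $0$ as $f'\downarrow f_c$.

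Three smaller points need fixing.
\begin{itemize}
\item Your sentence ``after $L^{f_c}$ is exhausted, deaths only remove individuals from sites below $f_c$'' is backwards: when $L^{f_c}=0$, deaths hit $(f_c,f]$. Instead take $f'\in(f_c,f)$. Then $L^{f'}$ is eventually never empty, since mutants enter $[0,f']$ at long-run rate $f'\Lambda_1>\Lambda_3$, so eventually no death touches $(f',f]$.
\item The Lyapunov function $l_1+l_2+l_3$ does not have negative drift under $\alpha_{ji}\le\beta_i$, because its $l_1$-coefficient is $\alpha_{11}+\alpha_{12}-\beta_1$. For example, $\beta_1=\beta_2$ with $\alpha_{ji}=0.9\,\beta_i$ is supercritical, with exponentially growing means. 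You need strict subcriticality (spectral radius of $(\alpha_{ji}/\beta_i)$ below $1$, and $\alpha_3<\beta_3$) together with a weighted Lyapunov function.
\item The bound $\lambda^1\ge\lambda^1_0$ controls sojourns at $\{N=0\}$ from above, not from below. A positive time fraction should instead come from positive Harris recurrence and the ergodic theorem.
\end{itemize}
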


First we need some remarks and then some preliminaries in Lemma \ref{fc-delta0} and Lemma \ref{Lemma4-Rahul} below,  
successively,  we are able to prove Theorem \ref{T1-Rahul}. 

\begin{remark}
\label{def-of-fc}
Recall that in the model developed by 
\citet{RoyTanemura}, $f_c:= \frac{1-p}{pr}$, and a birth or a death takes place at a discrete time point with probability $p$ or $1-p$ respectively; an individual is either a mutant with probability $p \cdot r$ or an individual of one of the existing subspecies with probability $p(1-r)$.

In the present paper, 
$\displaystyle f_c := \lim_{t \rightarrow + \infty} \frac{\mathbb{E}[\lambda^3(t)]}{\mathbb{E}[\lambda^1(t)]}$, 
and a birth or a death takes place at continuous time, following a Hawkes process with intensity 
$\lambda^1(t) + \lambda^2(t)$ or $\lambda^3(t)$, respectively; at birth an individual is either a mutant with intensity $\lambda^1(t)$ or an individual of one of the existing subspecies with intensity $\lambda^2(t)$.
\end{remark}

\begin{remark}
\label{situation-to-study}
When the mean birth rate, $\mathbb{E} [\lambda^1(t) + \lambda^2(t)]$, 
is less than the mean death rate, $\mathbb{E} [\lambda^3(t)]$, 
this means that $\lim_{t \rightarrow \infty} \frac{\mathbb{E} [\lambda^3(t)]}{\mathbb{E} [\lambda^1(t) + \lambda^2(t)]} > 1$, then for $n$ big enough, 
$\{N(\tau_n) : n \geq 0\}$ is equivalent to a random walk on $\mathbb{Z}^+$ with non-positive drift and reflection at $0$. 
Thus, $N(\tau_n)$ returns to $0$ infinitely often with probability $1$.

When the mean birth rate, $\mathbb{E} [\lambda^1(t) + \lambda^2(t)]$, 
is greater than the mean death rate, $\mathbb{E} [\lambda^3(t)]$, 
$\{N(\tau_n) : n \geq 0\}$ is equivalent to a random walk on $\mathbb{Z}^+$ 
with positive drift and, 
$N(\tau_n) \rightarrow \infty$ as $n \rightarrow \infty$ with probability $1$. 

Then, we have to study the situation in which 
$\mathbb{E} [\lambda^1(t) + \lambda^2(t)] > \mathbb{E} [\lambda^3(t)]$.

\end{remark}

\begin{corollary}
\label{Cor2-Rahul}
Let 
$\displaystyle F_t(f) := \frac{\# \left\{ s \in [0,f]: (k,s) \in X_t, \ \mbox{for some} \  k \geq 1 \right\}}{\# \left\{ s \in [0,1]: (k,s) \in X_t, \ \mbox{for some} \  k \geq 1 \right\}}$
be the empirical distribution of sites at $t$. 
If $f_c < 1$, 
$\displaystyle 
F_t(f) \rightarrow \frac{\max \{f - f_c, 0\} }{1-f_c}$ uniformly a.s..
\end{corollary}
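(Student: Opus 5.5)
We prove Corollary \ref{Cor2-Rahul} by reducing it to a counting statement about \emph{sites} rather than individuals. In the regime $f_c<1$, part 2 of Theorem \ref{T1-Rahul} gives $N(t)\to\infty$ a.s., so the process eventually stops returning to $0$. The denominator of $F_t(f)$ is the number of occupied sites, and its growth is driven only by mutant births, i.e.\ jumps of $N^1$. Under the Markov structure of Lemma \ref{EintensitiesENi}, $\mathbb{E}[\lambda^1(t)]\to C(1,2)>0$ and $N^1(t)/t\to C(1,2)$ a.s. The a.s.\ statement needs a law of large numbers for the ergodic Markov pair $(N^1,\lambda^1)$ of Theorem \ref{generator}.

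Each mutant site carries an independent uniform label on $[0,1]$. Fix $f\in(f_c,1]$. The plan is to show three things.
\begin{enumerate}
\item Sites created with label in $(f,1]$ after some random time are almost never totally killed.
\item Sites with label below $f_c$ are eventually all emptied.
\item Sites in $(f_c,f]$ survive in proportion to $f-f_c$.
\end{enumerate}

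\textbf{Step 1: the boundary process.} For $g\in(f_c,1)$ we study $L^g(t)$ through the transition rates of Section \ref{Rahul-model}. Deaths hit the left block first, at rate $\lambda^3$. The left block receives mutants at rate $g\lambda^1$ plus non-mutants at rate $\lambda^2 L^g/N$.

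Part 2 of Theorem \ref{T1-Rahul} gives $L^{f_c}(t)/N(t)\to0$. Hence, asymptotically, $L^g$ behaves like a birth--death chain with up-rate $\approx g\lambda^1$ and down-rate $\lambda^3$. Averaging along the ergodic intensities, its drift ratio tends to $g/f_c>1$. So $L^g\to\infty$ a.s. once $g>f_c$, and eventually no death reaches the sites with label in $(g,1]$.

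For $g<f_c$ the same comparison gives a recurrent chain. Then $L^g$ returns to $0$ infinitely often and, between returns, every site below $g$ is wiped out. Since new sites below $g$ appear only at rate $g\lambda^1$ and are killed within excursions of finite mean length, the number of living sites in $[0,g]$ is $o(t)$ a.s.

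\textbf{Step 2: counting.} From Step 1, the number of occupied sites with label in $(f,1]$ equals $(1-f)N^1(t)+o(t)$ a.s. This uses the strong law for i.i.d.\ uniform labels attached to the $N^1$-jumps. The number of occupied sites with label in $[0,f_c]$ is $o(t)$. Letting $g\downarrow f_c$ gives
\[
\frac{\#\{\text{occupied sites in }[0,f]\}}{\#\{\text{occupied sites}\}}\;\to\;\frac{(f-f_c)^+}{1-f_c}\quad\text{a.s.}
\]
for each fixed $f$. Uniformity then follows as in the Glivenko--Cantelli argument: $F_t$ is monotone and the limit is continuous, so pointwise a.s.\ convergence on a countable dense set of $f$ upgrades to uniform convergence.

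\textbf{Main obstacle.} The hardest step is the comparison in Step 1. $L^g$ is driven by random, mutually exciting intensities, not constant rates. The definition of $f_c$ only involves expectations $\mathbb{E}[\lambda^3]/\mathbb{E}[\lambda^1]$, whereas the comparison needs pathwise ratios.

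I would first try to use the Markov process $(N^i,\lambda^i)$ of Theorem \ref{generator} with $\delta=0$. Under (\ref{ws-stat-behav-<}) it has a stationary regime for the intensities. An ergodic theorem then gives $\int_0^t\lambda^i(s)ds/t\to\lim\mathbb{E}[\lambda^i]$ a.s. A random time change by $\int_0^t(\lambda^1+\lambda^2+\lambda^3)ds$ turns the embedded chain $\{L^g(\tau_n)\}$ into a random walk with asymptotically constant drift. At that point the discrete-time argument of \citet{RoyTanemura} can be applied essentially verbatim.
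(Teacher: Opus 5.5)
Your proposal is correct in outline and follows the paper's route. The paper deduces the corollary in one line from the dichotomy of Lemma \ref{Lemma4-Rahul} (for $g>f_c$ the block $L^g$ is eventually never empty, so sites above $g$ are never killed; for $g<f_c$ the block $[0,g]$ is negligible), combined with the i.i.d.\ uniform labels of mutant sites. You make explicit the SLLN count, the sandwich $g\downarrow f_c$ and the monotonicity argument for uniformity, which the paper leaves implicit.

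The one caveat is the step you flag yourself, which the paper also takes for granted. The pathwise laws of large numbers for $N^1,N^2,N^3$ need subcriticality: the spectral radius of $(\alpha_{ji}/\beta_i)$ strictly below $1$ and $\alpha_3<\beta_3$, not merely the entrywise bounds $\alpha_{ji}\le\beta_i$. Given those laws, Step 1 follows directly, without treating the embedded chain as a random walk with constant drift:
\begin{itemize}
\item for $g>f_c$, use $L^g(t)\ge M_g(t)-N^3(t)$, where $M_g$ is the number of mutants labelled in $[0,g]$; note that $g\lambda^1$ is only a lower bound on the up-rate of $L^g$ here, not an approximation;
\item for $g<f_c$, use $L^g\le L^{f_c}=o(N)$.
\end{itemize}
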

 
When the processes attain a wide-sense stationary behaviours, 
which implies the conditions given in (\ref{ws-stat-behav-0}) and (\ref{ws-stat-behav-<}),  $f_c$ assumes a closed form expression as given in 
(\ref{def-fc}) below.

\begin{lemma}
\label{fc-delta0}
Recalling 
(\ref{Aij-setting}), (\ref{E-lambda1}) and (\ref{E-lambda3}), 
and assuming the existence of $f_c$, as defined in (\ref{fc}), 
under the conditions (\ref{ws-stat-behav-0}), Remark \ref{widesensestazionarity}, namely, $\delta_{ij} = 0$, $i,j =1,2$,  and $\delta_3 = 0$, Equation (\ref{fc}) turns to be
\begin{eqnarray}
\label{def-fc}
f_c &:=&  \lim_{t \rightarrow \infty} \frac{\mathbb{E} [\lambda^3(t)]}{\mathbb{E} [\lambda^1(t)]} 
\nonumber\\
&=&  \lim_{t \rightarrow \infty} \frac{\lambda^3_0 
+ \lambda^3_0 (1 - e^{- \beta_3 t}) \frac{\alpha_3}{\beta_3}}{C(1,2)  + 
A(1,2) e^{-\beta_1 t} + B(1,2) e^{-\beta_2 t}}
\nonumber\\
&=&  \frac{\lambda^3_0 (\alpha_3 + \beta_3) \beta_1 \beta_2}{\left[\lambda^1_0 \beta_1 \beta_2 - \lambda^1_0 (\alpha_{11} \alpha_{22} - \alpha_{12} \alpha_{21}) + \beta_2 (\lambda^1_0 \alpha_{11} + \lambda^2_0 \alpha_{21} ) \right] \beta_3}.
\end{eqnarray}
Furthermore, assuming also 
the conditions (\ref{ws-stat-behav-<}) set in Remark \ref{widesensestazionarity},
namely 
$\alpha_{ji} \leq \beta_i$, $i,j =1,2$, and $\alpha_3 \leq \beta_3$, 
we have that $\displaystyle \frac{\lambda_0^3}{2\lambda_0^1+\lambda_0^2}  \leq f_c \leq 2 \frac{\lambda^3_0}{\lambda^1_0}$.
\end{lemma}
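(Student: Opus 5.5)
The proof will be a direct computation from Lemma \ref{EintensitiesENi}, followed by elementary bounds. The first step is to pass to the limit in the ratio. Under (\ref{ws-stat-behav-0}) the kernels are pure exponentials with $\beta_1,\beta_2,\beta_3>0$. By (\ref{E-lambda3}), the numerator $\mathbb{E}[\lambda^3(t)]$ tends to $\lambda^3_0(1+\alpha_3/\beta_3)=\lambda^3_0(\alpha_3+\beta_3)/\beta_3$. By (\ref{E-lambda1}), the denominator $\mathbb{E}[\lambda^1(t)]$ tends to $C(1,2)$, because both exponential terms vanish. We will assume $C(1,2)>0$, which is implicit in the existence of $f_c$. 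The limit of the ratio is then the ratio of the limits.

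Next we compute $C(1,2)$ in closed form. Note that it equals $\lambda^1_0+\int_0^\infty(\ldots)\,ds$, as can be read off the last display in the proof of Lemma \ref{EintensitiesENi}:
\[
C(1,2)=\lambda^1_0+\frac{\lambda^1_0\alpha_{11}+\lambda^2_0\alpha_{21}}{\beta_1}+\lambda^1_0\,\frac{\alpha_{12}\alpha_{21}-\alpha_{11}\alpha_{22}}{\beta_1(\beta_2-\beta_1)}+\lambda^1_0\,\frac{\alpha_{11}\alpha_{22}-\alpha_{12}\alpha_{21}}{\beta_2(\beta_2-\beta_1)} .
\]
The two cross terms combine. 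Their sum is $\lambda^1_0(\alpha_{12}\alpha_{21}-\alpha_{11}\alpha_{22})\bigl(\frac{1}{\beta_1}-\frac{1}{\beta_2}\bigr)/(\beta_2-\beta_1)$, and this simplifies to $\lambda^1_0(\alpha_{12}\alpha_{21}-\alpha_{11}\alpha_{22})/(\beta_1\beta_2)$. The singularity at $\beta_1=\beta_2$ therefore disappears, and the formula extends to that case by continuity or by a direct recomputation. Putting everything over the common denominator $\beta_1\beta_2$ gives
\[
C(1,2)=\frac{\lambda^1_0\beta_1\beta_2-\lambda^1_0(\alpha_{11}\alpha_{22}-\alpha_{12}\alpha_{21})+\beta_2(\lambda^1_0\alpha_{11}+\lambda^2_0\alpha_{21})}{\beta_1\beta_2}.
\]
Dividing the limit of the numerator by this yields (\ref{def-fc}). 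There is a subtlety: the paper's $A,B,C$ notation has slightly inconsistent indices. I would use the integrated formula from the proof of Lemma \ref{EintensitiesENi} directly rather than (\ref{Aij-setting}).

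For the bounds, assume (\ref{ws-stat-behav-<}), so $\alpha_3/\beta_3\le 1$ and the numerator limit lies in $[\lambda^3_0,2\lambda^3_0]$. For the denominator, the lower bound should follow from $C(1,2)\ge\lambda^1_0$. I would obtain this by writing $C(1,2)=\lambda^1_0+\int_0^\infty(\mathbb{E}[\lambda^1(s)]\text{-increment})$ and arguing nonnegativity of the excitation: $\mathbb{E}[\lambda^1(t)]\ge\lambda^1_0$ because all kernels are nonnegative. This gives $f_c\le 2\lambda^3_0/\lambda^1_0$. For the upper bound on $C(1,2)$, I bound the terms separately. We have $\alpha_{11}/\beta_1\le1$ and $\alpha_{21}/\beta_1\le1$, and the cross term satisfies $\alpha_{12}\alpha_{21}/(\beta_1\beta_2)\le1$. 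Here a naive estimate gives $C(1,2)\le 3\lambda^1_0+\lambda^2_0$ rather than $2\lambda^1_0+\lambda^2_0$.

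This last estimate is the main obstacle. To reach $2\lambda^1_0+\lambda^2_0$, I would try to absorb the cross term $\lambda^1_0\alpha_{12}\alpha_{21}/(\beta_1\beta_2)$ against $-\lambda^1_0\alpha_{11}\alpha_{22}/(\beta_1\beta_2)$ and $\lambda^1_0\alpha_{11}/\beta_1$. Concretely, I would write $\lambda^1_0\frac{\alpha_{11}}{\beta_1}\bigl(1-\frac{\alpha_{22}}{\beta_2}\bigr)+\lambda^1_0\frac{\alpha_{12}\alpha_{21}}{\beta_1\beta_2}$ and hope to bound it by $\lambda^1_0$ using the ratios $a=\alpha_{11}/\beta_1$, $d=\alpha_{22}/\beta_2$, $bc=\alpha_{12}\alpha_{21}/(\beta_1\beta_2)$, each in $[0,1]$. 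If the constraints $a,d,b,c\le1$ alone do not suffice, I would invoke the wide-sense stationarity (spectral radius $<1$) condition from Remark \ref{widesensestazionarity}, i.e. $bc<(1-a)(1-d)$. This gives $a(1-d)+bc\le a(1-d)+(1-a)(1-d)=1-d\le1$, hence $C(1,2)\le2\lambda^1_0+\lambda^2_0$ and $f_c\ge\lambda^3_0/(2\lambda^1_0+\lambda^2_0)$.
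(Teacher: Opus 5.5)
Your derivation of the closed form is correct and is the same computation the paper does. The paper's only justification is the displayed chain of equalities: let $t\to\infty$ in (\ref{E-lambda1}) and (\ref{E-lambda3}), and write $C(1,2)=\lambda^1_0+\frac{\lambda^1_0\alpha_{11}+\lambda^2_0\alpha_{21}}{\beta_1}-\lambda^1_0\frac{\alpha_{11}\alpha_{22}-\alpha_{12}\alpha_{21}}{\beta_1\beta_2}$ over the common denominator $\beta_1\beta_2$. The paper gives no argument at all for the two-sided bound.

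The step that does not go through is the lower bound $f_c\ge\lambda^3_0/(2\lambda^1_0+\lambda^2_0)$. It cannot be closed from the stated hypotheses, because it is false under them. Set $a=\alpha_{11}/\beta_1$, $b=\alpha_{12}/\beta_2$, $c=\alpha_{21}/\beta_1$, $d=\alpha_{22}/\beta_2$. Then $C(1,2)=\lambda^1_0\bigl(1+a(1-d)+bc\bigr)+\lambda^2_0c$. The hypotheses $\alpha_{ji}\le\beta_i$, $\alpha_3\le\beta_3$ give only $a,b,c,d\in[0,1]$, so your ``naive'' bound $3\lambda^1_0+\lambda^2_0$ is sharp. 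Concretely, take $\alpha_{11}=\alpha_{21}=\beta_1$, $\alpha_{12}=\beta_2$, $\alpha_{22}=\alpha_3=\varepsilon$. Formula (\ref{def-fc}) then gives $f_c=\lambda^3_0(1+\varepsilon/\beta_3)/(3\lambda^1_0+\lambda^2_0-\lambda^1_0\varepsilon/\beta_2)$, which is strictly below $\lambda^3_0/(2\lambda^1_0+\lambda^2_0)$ for small $\varepsilon>0$.

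The condition you fall back on, $\alpha_{12}\alpha_{21}<(\beta_1-\alpha_{11})(\beta_2-\alpha_{22})$, says the matrix of $L^1$-norms of the kernels has spectral radius below one. It does make your final estimate $a(1-d)+bc\le 1-d\le 1$ work. But it is an extra hypothesis, not something in Remark \ref{widesensestazionarity}. That remark imposes only the entrywise bounds $\int_0^\infty\varphi_{ji}\le 1$, which the counterexample satisfies. So do not attribute the condition to that remark or leave it as a hedge (``hope to bound it''). Instead, state that the lemma's lower bound is false as written, give the counterexample, and prove the bound under an explicitly added assumption. Yours works, and so does $\alpha_{11}\alpha_{22}\ge\alpha_{12}\alpha_{21}$, which lets you drop the cross term.

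A smaller point concerns the upper bound $f_c\le 2\lambda^3_0/\lambda^1_0$. Prove it directly from the closed form: $C(1,2)-\lambda^1_0=\lambda^1_0a(1-d)+\lambda^1_0bc+\lambda^2_0c\ge 0$, which uses exactly $\alpha_{22}\le\beta_2$. Appealing to $\mathbb{E}[\lambda^1(t)]\ge\lambda^1_0$ for the actual process, or to nonnegative increments, does not transfer to the expression in Lemma \ref{EintensitiesENi}. Its time derivative contains $\lambda^1_0\int_0^t\bigl(\varphi_{12}(t-u)\varphi_{21}(u)-\varphi_{11}(t-u)\varphi_{22}(u)\bigr)\,du$. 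This term is negative whenever $\alpha_{11}\alpha_{22}>\alpha_{12}\alpha_{21}$, and if also $\beta_1>\beta_2$ it dominates for large $t$. So that expression is not monotone in $t$ and does not inherit the properties of the true mean.
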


\begin{remark}
As a special case, assuming that $\alpha_{ij} = \alpha_3 = \alpha$, constant, and $\beta_i = \beta$, for all $i$, $j$, then 
$f_c = \lambda^3_0 \left(\lambda^1_0 + \lambda^2_0 \frac{\alpha}{\alpha+ \beta} \right)^{-1} \leq \ \frac{\lambda^3_0}{\lambda^1_0}$.
\end{remark}

\begin{lemma} 
\label{Lemma4-Rahul}
Under all the assumptions made so far, by the markovianity of the model,
taking into account that $f_c$ is given by 
(\ref{def-fc}), we get that
\begin{enumerate}
\item If $f_c < 1$, we have to consider two situations
\begin{enumerate}
\item For $f < f_c$ and for any 
$\eta \in (0,1)$, 
then 
\begin{eqnarray}
\label{Eq1a1}
&&P \left( \exists \ T > 0 : \rho^f_t := \frac{L^f(t)}{N(t)} \leq \eta, \forall t \geq T \right)= 1, 
\quad \mbox{and} 
\\
\label{Eq1a2}
&& \qquad P \left( L^f(\tau_n) = 0 \ \mbox{infinitely often} \right) = 1.
\end{eqnarray} 
\item For $f > f_c$, then
\begin{equation}
    \label{Eq1b} 
    P \left( L^f(\tau_n) = 0 \ \mbox{infinitely often} \right) = 0. 
\end{equation} 
\end{enumerate}
\item 
If $f_c \geq 1$, then
\begin{enumerate}
\item 
For $f < 1$ and for $\eta \in (0,1)$, since $f \in (0,1)$, we have 
(\ref{Eq1a1}) and (\ref{Eq1a2}),
\item
For $f = 1$ we have 
(\ref{Eq1b}).
\end{enumerate}

\end{enumerate}
\end{lemma}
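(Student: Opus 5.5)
The plan is to compare the left-part process $L^f(\tau_n)$ with a one-dimensional random walk, following the discrete-time argument of \citet{RoyTanemura}. The Markov structure of Theorem \ref{generator} and the transition rates of Section \ref{Rahul-model} are what make this comparison possible. While $L^f>0$, every death removes an individual from $[0,f]$, since the smallest fitness lies there. So $L^f$ moves down at rate $\lambda^3(t)$. It moves up at rate $f\lambda^1(t)+\lambda^2(t)\rho^f_t$, where $\rho^f_t=L^f(t)/N(t)$. Its embedded chain at the jump times of $L^f$ therefore has up-probability
\[
p^f_t=\frac{f\lambda^1(t)+\lambda^2(t)\rho^f_t}{f\lambda^1(t)+\lambda^2(t)\rho^f_t+\lambda^3(t)} .
\]
Under (\ref{ws-stat-behav-0})--(\ref{ws-stat-behav-<}), Lemma \ref{EintensitiesENi} shows that $\mathbb{E}[\lambda^i(t)]$ converges exponentially fast. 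I would first upgrade this to ergodic time averages $\frac1t\int_0^t\lambda^i(s)\,ds\to\lim_t\mathbb{E}[\lambda^i(t)]$ a.s., using the Markov (exponential kernel) representation of Proposition \ref{differentiable} and a martingale law of large numbers for $N^i(t)-\int_0^t\lambda^i$. This lets me replace the random intensities by their limits $\bar\lambda^1,\bar\lambda^2,\bar\lambda^3$, so that $f_c=\bar\lambda^3/\bar\lambda^1$ by (\ref{def-fc}).

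\emph{Case 1(a), $f<f_c$.}
\begin{itemize}
\item[(i)] Fix $\eta$. On the event that $\rho^f_t>\eta$ along a sequence of times, the up-rate of $L^f$ is at most $f\bar\lambda^1+\bar\lambda^2\rho^f$.
\item[(ii)] Compare the growth of $L^f$ with the growth of $N$. This needs a self-consistency argument: if $\rho^f$ stayed above $\eta$, then $L^f$ would grow at asymptotic rate $f\bar\lambda^1+\bar\lambda^2\rho-\bar\lambda^3<\rho(\bar\lambda^1+\bar\lambda^2-\bar\lambda^3)$. This strict inequality holds because $f\bar\lambda^1-\bar\lambda^3<0$ and $\rho<1$, and the right-hand side is the rate at which $\rho N$ grows. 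So $\rho^f$ must decrease, giving (\ref{Eq1a1}) with a random $T$.
\item[(iii)] Once $\rho^f\le\eta$ with $\eta$ small enough that $f\bar\lambda^1+\eta\bar\lambda^2<\bar\lambda^3$, the process $L^f$ is dominated by a reflected walk with negative drift. Such a walk is recurrent, which yields (\ref{Eq1a2}) by a Borel--Cantelli / Lévy conditional argument applied to excursions.
\end{itemize}

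\emph{Case 1(b), $f>f_c$.} Here $f\bar\lambda^1>\bar\lambda^3$ already from the mutant births alone, ignoring the $\lambda^2$ term. So $L^f$ stochastically dominates a walk with positive drift, which returns to $0$ only finitely often a.s. This gives (\ref{Eq1b}). For this I would couple the mutant births landing in $[0,f]$, a thinning of $N^1$ with probability $f$, against $N^3$ on the time-changed scale given by the ergodic averages.

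\emph{Case 2, $f_c\ge1$.} For $f<1$ we have $f<f_c$, so case 1(a) applies verbatim. For $f=1$, $L^1=N$, and the hypothesis of Theorem \ref{T1-Rahul}(3) together with Remark \ref{situation-to-study} gives a transient, positive-drift total population, hence (\ref{Eq1b}).

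\emph{Main obstacle.} The hardest step is the replacement of the random intensities $\lambda^i(t)$ by their mean limits. The model only controls expectations, whereas the walk comparison needs almost-sure bounds on the rates over long windows. I would try a strong law for the stationary Markov process $(\lambda^1,\lambda^2,\lambda^3)$ with generator from Theorem \ref{generator}, via a Lyapunov function $V(l)=\sum l_i$. The drift condition $\beta_i(\lambda^i_0-l_i)+\sum_j \alpha_{ij}l_j$ should be negative for large $l$ under (\ref{ws-stat-behav-<}), with strict inequality presumably needed. Combined with Harris recurrence, this gives ergodicity, and the walk comparisons then follow on random time scales.
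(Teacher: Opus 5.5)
There is a genuine gap in step (ii) of case 1(a). Your contradiction assumes $\rho^f_t>\eta$ for \emph{all} large $t$, so at best it yields $\liminf_{t\to\infty}\rho^f_t\le\eta$. But (\ref{Eq1a1}) asserts $\rho^f_t\le\eta$ for all $t\ge T$, i.e.\ $\limsup_{t\to\infty}\rho^f_t\le\eta$. The event you correctly isolate in (i), $\rho^f_t>\eta$ along a sequence of times, allows $\rho^f$ to oscillate. On that event you have no prior upper bound on $\rho^f$. The up-rate $f\lambda^1+\lambda^2\rho^f$ of $L^f$ depends on the very quantity you are trying to control, so it cannot be frozen for a comparison with a walk, and ``$\rho^f$ must decrease'' does not follow. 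Step (iii) inherits the gap, because dominating $L^f$ by a reflected negative-drift walk needs $\rho^f_t\le\eta$ for all $t\ge T$. So (\ref{Eq1a2}) is not established, and neither is case 2(a), which you reduce to 1(a). The rest of your architecture is the paper's: the inequality $h(x)<x$ on $[0,1]$ for $h(x)=\frac{f\Lambda_1-\Lambda_3+\Lambda_2x}{\Lambda_1+\Lambda_2-\Lambda_3}$, the mutant-only lower comparison for $f>f_c$ (the paper's $L^f_t(0)$), and $N\to\infty$ for $f=1$.

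The missing device is the paper's monotone family $(L^f_t(\epsilon),R^f_t(\epsilon))$, in which the proportion $L^f/N$ steering the non-mutant births is replaced by a constant $\epsilon$. These chains are ordered in $\epsilon$, each satisfies $\rho^f_t(\epsilon)\to h(\epsilon)$ a.s., and the true chain is dominated by the $\epsilon$-chain once $\rho^f\le\epsilon$. The iteration runs as follows:
\begin{itemize}
\item from $\rho^f_t\le\rho^f_t(1)$ one gets $\rho^f_t\le h(1+\delta)$ for all large $t$;
\item then $\rho^f_t\le h(h(1+\delta)+\delta)$ for all large $t$, and so on;
\item since $h(x+\delta)<x$, finitely many rounds reach $\eta$;
\item rounding each level up to a dyadic grid keeps the family of almost-sure events countable, which is what legitimises the random thresholds.
\end{itemize}
Each round yields an \emph{eventual} bound, which a one-shot contradiction cannot. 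An alternative would be a Borel--Cantelli bound on late upcrossings of $[\eta/2,\eta]$, using that the mean drift of $\rho^f$ is $g(\rho^f)/N$ with $g(\rho)=f\Lambda_1-\Lambda_3-\rho(\Lambda_1-\Lambda_3)<0$ on $[0,1]$. Either way, some such argument must be supplied.

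Separately, your Lyapunov step fails as stated. For $V=\sum_i l_i$, the generator of Theorem \ref{generator} gives drift $\mathrm{const}-(\beta_1-\alpha_{11}-\alpha_{12})l_1-(\beta_2-\alpha_{21}-\alpha_{22})l_2-(\beta_3-\alpha_3\mathbf{1}_{N>0})l_3$. Condition (\ref{ws-stat-behav-<}) does not make this negative, even with strict inequalities. What is needed is subcriticality: $\alpha_{ii}<\beta_i$, $(\beta_1-\alpha_{11})(\beta_2-\alpha_{22})>\alpha_{12}\alpha_{21}$ and $\alpha_3<\beta_3$, under which a weighted $V$ works. For example, $\beta_1=\beta_2=\beta$ with $\alpha_{ji}=0.9\beta$ satisfies (\ref{ws-stat-behav-<}) strictly, yet $\mathbb{E}[\lambda^1(t)+\lambda^2(t)]$ grows exponentially. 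The paper simply posits the limits $\Lambda_i$, so you would have to add such a hypothesis explicitly.
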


\begin{proof}
The idea of the proof is to prove 1. and 2. together. 
To this end, note that choosing   
$f < f_c \wedge 1$, we have that $R^f(t)$ is much larger than $L^f(t)$. 
Hence, the main tool is to study a Markov chain, which is a modification of that given in Section \ref{Rahul-model} and constructed as follows. 
This is a procedure which is along the lines of that used in 
\citet{RoyTanemura} in the discrete case. 
Given $\epsilon \in [0,1]$, we construct a modified  Markov chain 
$\left( L^f_t(\epsilon), R^f_t(\epsilon) \right)$, which admits jumps at times 
$\{\tau_n\}_{n > 0}$, 
while, between the jump times, 
remains constant, i.e.
$\left( L^f_t(\epsilon), R^f_t(\epsilon) \right) = \left(L^f_{\tau_n}(\epsilon), R^f_{\tau_n}(\epsilon)\right)$ for $\tau_n \leq t < \tau_{n+1}$, 
and such that 
at each jump time the stationary transition probabilities are given by
\begin{enumerate}
\item
If $\left(L^f_t(\epsilon), R^f_t(\epsilon)\right) = (0,0)$
$$\left(L^f_{t+h}(\epsilon), R^f_{t+h}(\epsilon)\right) = \left\{ 
\begin{array}l
(1,0) \qquad w.  p. \qquad f  \left(\lambda^1(t) + \lambda^2(t) \right) h + o(h)
\\
(0,1) \qquad w.  p. \qquad (1-f) \left(\lambda^1(t) + \lambda^2(t) \right) h + o(h)
\\
(0,0) \qquad w.  p. \qquad o(h)
\end{array}
\right.$$
\item 
If $\left(L^f_t(\epsilon), R^f_t(\epsilon)\right) \in \{0\} \times \mathbb{N}$
$$\left(L^f_{t+h}(\epsilon), R^f_{t+h}(\epsilon)\right) = \left\{ 
\begin{array}l
\left(1,R^f_t(\epsilon)\right) \hspace{.2 in} w.  p. \ f  \lambda^1(t) h + o(h)
\\
\left(0,R^f_t(\epsilon) + 1\right)  w.  p. \ \left( (1-f) \lambda^1(t) + \lambda^2(t) \right) h \hspace{-.03 in} + \hspace{-.03 in} o(h)
\\
\left(0,R^f_t(\epsilon) - 1\right)  w.  p. \ \lambda^3(t) h + o(h)
\end{array}
\right.$$
\item 
If $\left(L^f_t(\epsilon), R^f_t(\epsilon) \right) \in  \mathbb{N} \times \{0\}$
$$\left(L^f_{t+h}(\epsilon), R^f_{t+h}(\epsilon)\right) = \left\{ 
\begin{array}l
\left(L^f_t(\epsilon) +1, 0 \right) \quad  w. p. \quad \left( f \lambda^1(t) + \lambda^2(t) \right) h + o(h)
\\
\left(L^f_t(\epsilon), 1 \right) \hspace{.38 in} w.  p. \quad (1-f) \lambda^1(t) h + o(h)
\\
\left(L^f_t(\epsilon) - 1,0 \right) \quad w. p. \quad \lambda^3(t) h + o(h)
\end{array}
\right.$$
\item 
If $\left(L^f_t(\epsilon), R^f_t(\epsilon)\right) \in  \mathbb{N} \times \mathbb{N}$
$$\left(L^f_{t+h}(\epsilon), R^f_{t+h}(\epsilon)\right) = \left\{ 
\begin{array}l
\left(L^f_t(\epsilon) +1, R^f_t(\epsilon) \right)   w. p. 
\left(f  \lambda^1(t) + \epsilon \lambda^2(t) \right) h + o(h)
\\
\left(L^f_t(\epsilon), R^f_t(\epsilon) +1\right)  w.  p. 
\\
\hspace{.7 in}  \left( (1-f) \lambda^1(t)
+ (1 - \epsilon) \lambda^2(t)  \right) h + o(h)
\\
\left(L^f_t(\epsilon) - 1, R^f_t(\epsilon) \right)   w. p. \ \lambda^3(t) h + o(h).
\end{array}
\right.$$
\end{enumerate}
The difference between this Markov chain and that given in Section \ref{Rahul-model} is in the definition of the step 4, since  we replace $\frac{L^f(t)}{N(t)}$ with $\epsilon$, and $\frac{R^f(t)}{N(t)} = \frac{N(t) - L^f(t)}{N(t)} = 1 - \frac{L^f(t)}{N(t)}$ with $1-\epsilon$. 
For $\epsilon, \epsilon' \in [0,1]$, we have that 
\begin{equation}
\label{L<R<}
L^f_t(\epsilon) \leq L^f_t(\epsilon')
\quad \mbox{and} \quad
R^f_t(\epsilon) \leq R^f_t(\epsilon')
\qquad \mbox{for} \ \epsilon \leq \epsilon' \quad \mbox{and} \quad  t \geq 0.
\end{equation}
Given $L^f(t), R^f(t)$ and $N(t)$ as  in  Section \ref{Rahul-model}, Equations (\ref{Lft-Rft}), 
$N(t) = L^f(t) + R^f(t)$. Taking 
$L^f_t(\epsilon), R^f_t(\epsilon)$ and $N_t(\epsilon)$ as above, and choosing
$\rho^f_t := \frac{L^f(t)}{N(t)}$, we get
\begin{eqnarray}
\label{L0<L1,R1<R0}
&L^f(t+h) = L^f_{t+h}(\rho^f_t), \qquad  R^f(t+h) = R^f_{t+h}(\rho^f_t),&
    \\
   &N_t(\epsilon) := L^f_t(\epsilon) + R^f_t(\epsilon) = N(t), \quad L^f_t(0) \leq L^f_t  \leq L^f_t(1), \quad 
    R^f_t(1) \leq R^f_t  \leq R^f_t(0).&
    \nonumber
\end{eqnarray}
Setting, as usual, $\Delta L^f_t(\epsilon) = L^f_t(\epsilon) - L^f_{t-}(\epsilon)$ and 
$\Delta N_t(\epsilon) = N_t(\epsilon) - N_{t-}(\epsilon)$, 
and recalling that the number of jumps of the chain until time $t$ is
$N^1_t(\epsilon) + N^2_t(\epsilon) + N^3_t(\epsilon)$, 
by the Law of Large Numbers, we get that
$$\lim_{t \rightarrow \infty} \frac{L^f_t(\epsilon)}{N^1_t(\epsilon) + N^2_t(\epsilon) + N^3_t(\epsilon)} 
= \lim_{t \rightarrow \infty} \mathbb{E}\left[ \Delta L^f_t(\epsilon) \right]
\hspace{3 in}$$
$$= f \lambda^1_0 + \epsilon \lambda^2_0 - \lambda^3_0
+ f  \left( \frac{\lambda^1_0 \alpha_{11} }{\beta_1} + \frac{\lambda^2_0 \alpha_{21} }{\beta_1} \right)
+ \epsilon \left( \frac{\lambda^2_0 \alpha_{22} }{\beta_2} + \frac{\lambda^1_0  \alpha_{12} }{\beta_2} \right)
- \lambda^3_0 \frac{\alpha_3}{\beta_3}
- (f \lambda^1_0 + \epsilon \lambda^2_0 ) \frac{\alpha_{11} \alpha_{22} - \alpha_{12} \alpha_{21}}{\beta_1 \beta_2},
$$
and
$$\lim_{t \rightarrow \infty} \frac{N_t(\epsilon)}{N^1_t(\epsilon) + N^2_t(\epsilon) + N^3_t(\epsilon)} 
= \lim_{t \rightarrow \infty} \mathbb{E}\left[ \Delta N_t(\epsilon) \right]
\hspace{3 in}$$
$$= \lambda^1_0 + \lambda^2_0 - \lambda^3_0
+ \left( \frac{ \lambda^1_0 \alpha_{11} }{\beta_1} + \frac{ \lambda^2_0 \alpha_{21} }{\beta_1} \right)
+  \left( \frac{\lambda^2_0 \alpha_{22} }{\beta_2} + \frac{\lambda^1_0 \alpha_{12} }{\beta_2} \right)
- \lambda^3_0 \frac{\alpha_3}{\beta_3} 
- (\lambda^1_0 + \lambda^2_0 ) \frac{\alpha_{11} \alpha_{22} - \alpha_{12} \alpha_{21}}{\beta_1 \beta_2},
$$
almost surely. Thus, setting 
$\Lambda_i := \lim_{t \rightarrow \infty} \mathbb{E}[\lambda^i(t)]$, for $i=1,2,3$, and 
$\rho^f_t(\epsilon) := \frac{L^f_t(\epsilon)}{N_t(\epsilon)}$, 
\begin{eqnarray}
\label{rhotf}
\lim_{t \rightarrow \infty} \rho^f_t(\epsilon)  
&:=& \lim_{t \rightarrow \infty} \frac{L^f_t(\epsilon)}{N_t(\epsilon)} 
=  \lim_{t \rightarrow \infty} \frac{f  \mathbb{E}[\lambda^1(t)] + \epsilon \mathbb{E}[\lambda^2(t)] - \mathbb{E}[\lambda^3(t)]}{\mathbb{E}[\lambda^1(t)] + \mathbb{E}[\lambda^2(t)] - \mathbb{E}[\lambda^3(t)]} 
\nonumber\\
&=&  
\frac{f  \Lambda_1 - \Lambda_3}{\Lambda_1 + \Lambda_2 - \Lambda_3}
+ 
\frac{ 
\epsilon \Lambda_2}{\Lambda_1 + \Lambda_2 - \Lambda_3}.
\end{eqnarray}
Let us introduce the linear function
$h(x) := \frac{f  \Lambda_1 - \Lambda_3}{\Lambda_1 + \Lambda_2  - \Lambda_3 } + \frac{\Lambda_2}{\Lambda_1 + \Lambda_2  - \Lambda_3 } \ x.$
Since, for $i,j=1,2$, and $j\neq i$,
$\Lambda_i =  \lambda^i_0 
+ 
\left(\frac{\lambda^i_0 \alpha_{ii} }{\beta_i}
+ \frac{\lambda^j_0 \alpha_{ji} }{\beta_j}
 \right) 
- \lambda^i_0 \frac{\alpha_{ii} \alpha_{jj} - \alpha_{ij} \alpha_{ji}}{\beta_i \beta_j}$, and 
$\Lambda_3 = \lambda^3_0 + \lambda^3_0 \frac{\alpha_3}{\beta_3}$, 
without loss of generality, for the present model we assume that $\Lambda_i > 0$, for $i=1,2,3$, and by the stability condition in the Markov case, we have that $\Lambda_1+\Lambda_2 - \Lambda_3 > 0$. Hence $h(x)$ is an increasing line, and recalling that $f \in(0,1)$, for $\Lambda_1 < \Lambda_3$,
$$h(0) = \frac{ f \Lambda_1 - \Lambda_3}{\Lambda_1 + \Lambda_2  - \Lambda_3 } < 0, \quad 
h(1) := \frac{f  \Lambda_1 - \Lambda_3}{\Lambda_1 + \Lambda_2  - \Lambda_3 } + \frac{\Lambda_2}{\Lambda_1 + \Lambda_2  - \Lambda_3 } < 1.$$
Thus, there exists a real $\delta>0$ such that $h(x+\delta) < x$, for all $x \in[0, 1]$. 
Setting
$H(\epsilon, \delta) := \left\{ \omega: \exists T = T(\omega) : \forall t \geq T, \rho^f_t(\epsilon) < h(\epsilon + \delta) \right\}$, 
by 
(\ref{rhotf}),  for all $\epsilon, \delta \in (0,1]$,
$\displaystyle \lim_{t \rightarrow \infty} \rho^f_t(\epsilon) = h(\epsilon) < h(\epsilon + \delta)$, 
which implies that
\begin{equation}
\label{PLambda=1}
P \left( H(\epsilon, \delta) \right) =  1.
\end{equation}
Thus, taking $\epsilon_c > 0$, such that $h(\epsilon_c + \delta) = \eta$, since $h(x)$ is an increasing line, for $\epsilon < \epsilon_c$, $h(\epsilon + \delta) < h(\epsilon_c + \delta) = \eta$. But the sequence defined by 
$x_{n+1} := h(x_n + \delta)$, is a decreasing sequence, since 
$x_{n+1} = h(x_n + \delta) < x_n$, for $x_n \in [0, 1]$. This means that
$\lim_{n \rightarrow \infty} x_n  
= h(0) = \frac{ f \Lambda_1 - \Lambda_3}{\Lambda_1 + \Lambda_2  - \Lambda_3 } < 0$. 
Setting
$h(k, x + \delta) := h \left(2^{-k} ( [2^k x] + 1 ) + \delta \right)$, 
for $k \in \mathbb{N}$, big enough, there exists $n_c \in \mathbb{N}$ such that the sequence
\begin{equation}
\label{h^n:=}
h_n = h_n(k, 1 + \delta) := h(k, h_{n-1} + \delta) \leq \eta, 
\qquad \forall n \geq n_c.  
\end{equation}
Taking into account that
by 
(\ref{L<R<}) and (\ref{L0<L1,R1<R0}), 
$\rho^f_t(\epsilon) \geq \rho^f_t(\epsilon') \quad \mbox{for} \ \epsilon > \epsilon'$, and $\rho^f_t \leq \rho^f_t(1)$, 
for all $\omega \in \cap_{m \in \mathbb{N}} H(m 2^{-k}, \delta)$, there exists $T_1(\omega)$, such that, 
$\rho^f_t [\omega] \leq \rho^f_t(1) [\omega] \leq h(k, 1+ \delta)$, for all $t \geq T_1(\omega)$, 
and there exists $T_2(\omega) \geq T_1(\omega)$, such that, for all $t \geq T_2(\omega)$, 
$\rho^f_t [\omega] \leq \rho^f_t \left(h(k, 1+ \delta) \right) [\omega] 
\leq h \left(k, h(k, 1+\delta)+ \delta \right) 
= h_2(k, 1+ \delta)$. 
Repeating this procedure, we get that for any $l \in \mathbb{N}$, there exists $T_l(\omega)$ such that, for all $t \geq T_l(\omega)$
$\rho^f_t [\omega] \leq h_l(k, 1+ \delta) \qquad \forall \ t \geq T_l(\omega)$.
From 
(\ref{h^n:=}), we have that
$\rho^f_t [\omega] \leq \eta$, for all $t \geq T_{n_c}(\omega)$.
Since $P\left( \cap_{m \in \mathbb{N}} H(m 2^{-k}, \delta) \right) = 1$, from 
(\ref{PLambda=1}), we have that 
$\displaystyle \lim_{t \rightarrow \infty} \rho^f_t [\omega] \leq \eta$, a.s., 
and the condition given in 
(\ref{Eq1a1}) is proven. 
For the condition 
(\ref{Eq1a2}), $L^f_t$ is the continuous time interpolation of  $L^f_{\tau_n}$, and that $L^f_{\tau_n}$ is recurrent for 
$$ f < f_c - \frac{\Lambda_2}{\Lambda_1} \epsilon.$$
Hence, for $f < f_c \wedge 1$, this condition holds for $\epsilon$ small enough, which implies that $L^f_{\tau_n}=0$ infinitely often with probability 1. 

To prove 
(\ref{Eq1b}),  let $f_c <1$, and note that, at $t$, the number of sites in $[0,f]$, is less than $L^f_t(0)$, and that when $f > f_c$, the number of sites in $[0,f]$ cannot be null with positive probability. This complete the proof of part 1 point b) of Lemma \ref{Lemma4-Rahul}. 
The proof of part 2 point b) of Lemma \ref{Lemma4-Rahul} follows since, for 
$1 \leq f_c < \frac{\Lambda_3}{\Lambda_1}$,  the observation given in  Remark \ref{stability+Markov} assures that $N_t \rightarrow \infty$ as $t \rightarrow \infty$ with probability $1$.
\end{proof}

Summing up, for the proof of Theorem \ref{T1-Rahul}, first, taking into account the closed form expressions obtained for $\mathbb{E}[\lambda^i(t)]$, $i=1,2,3$, under the Markov property and the wide-sense stationary behaviours,  we have that: 
Theorem \ref{T1-Rahul}.1 
    is obtained  analogously to queueing theory with mean arrival rates less than mean service rates. 

For Theorem \ref{T1-Rahul}.2, the first statement  is derived by Lemma \ref{Lemma4-Rahul}.1.b and Lemma \ref{Lemma4-Rahul}.2.b., while the second statement is derived taking into account that, considering $\lambda^1(t)$, the birth rate of mutants, the limiting expected number of them with a fitness between $(a, b)$, with $f_c < a < b \leq 1$, is $\lambda^1(t) (b-a)$. Thus, by the Strong Law of Large Numbers, 
$$\liminf_{t \rightarrow \infty}  \frac{R^a(t) - R^b(t)}{N(t)} \geq \frac{b-a}{1-f_c}
      \qquad \mbox{a.s.}.$$
Theorem \ref{T1-Rahul}.3 is derived by Lemma \ref{Lemma4-Rahul}.2.  
Finally, since the sites are independently and uniformly distributed in $[0,1]$, 
Corollary \ref{Cor2-Rahul} follows from Lemma \ref{Lemma4-Rahul}.

\appendix
\section{Proofs}
\subsection{Proof of Theorem \ref{EBElambda}}
\label{ProofThENElambda}

\begin{remark}
Recall that, any non-negative $\mathcal{F}_t$-progressively measurable process $\lambda(t)$ is called $\mathcal{F}_t$-intensity of a counting process $N(t)$, provided that, for all $s$, $t$, with $0 < s < t$, 
$\mathbb{E} \left[ N(s,t] |  \mathcal{F}_s \right] = 
\mathbb{E} \left[ N(t) - N(s)  |  \mathcal{F}_s \right] = 
\mathbb{E} \left[ \int_s^t \lambda(u) du | \mathcal{F}_s \right]$. 
This implies that, for $N(0)=0$,
$\mathbb{E} \left[ N(t) \right] =  \mathbb{E} \left[ \int_0^t \lambda(u) du \right] 
= \int_0^t \mathbb{E} \left[ \lambda(u) \right]  du$,
and that $d \mathbb{E} \left[ N(t) \right] = \mathbb{E} \left[ \lambda(t)  \right] dt$, since
\begin{eqnarray}
    d \mathbb{E} \left[ N(t) \right] 
    &=& \lim_{h \rightarrow 0} \frac{1}{h} \mathbb{E} \left[ N(t+h) - N(t) \right] dt
   = \lim_{h \rightarrow 0} \frac{1}{h} \mathbb{E} \left[ \int_t^{t+h} \lambda(u) du \right] dt
    \nonumber\\
    &=& \lim_{h \rightarrow 0} \frac{1}{h} \int_t^{t+h} \mathbb{E} \left[ \lambda(u)  \right] du \ dt
    = \lim_{h \rightarrow 0} \mathbb{E} \left[ \lambda(t+o(h))  \right] dt 
    = \mathbb{E} \left[ \lambda(t)  \right] dt.
    \nonumber
\end{eqnarray}
\end{remark}

\begin{proof} of Theorem \ref{EBElambda}. \quad 
Let $y^i_t = \mathbb{E} [ \lambda^i(t) ]$, with 
$y^i_0 = \mathbb{E} [ \lambda^i(0)] = \lambda^i_0$. 
By the ODE given in (\ref{ODExLambdaB}) we have
$(y^i_t)'  =
\sum_{j=1,2} \left( \int_0^t \varphi'_{ji}(t-u) y^j_u du +  \varphi_{ji}(0)  y^j_t \right)$. 
Taking the Laplace Transform on both sides, and setting as usual  
$Y^i(s) = {\cal L}(y^i_t)(s) = \int_0^\infty e^{-s t} y^i_t dt$,
for the LHS (left hand side) we get that
${\cal L}((y^i_t)'  )(s) = s Y^i(s) - y^i_0 = s Y^i(s) - \lambda^i_0$.
For the RHS (right hand side), we have successively
\begin{eqnarray}
&&{\cal L}\left(\sum_{j=1,2} \left( \int_0^t \varphi'_{ji}(t-u) y^j_u du +  \varphi_{ji}(0)  y^j_t \right) \right)(s)
\nonumber\\
&& = \sum_{j=1,2}  \left[ \left( s {\cal L} (\varphi_{ji}(t) )(s) - \varphi_{ji}(0)   \right) Y^j(s)
+  \varphi_{ji}(0) Y^j(s) \right] 
= s \sum_{j=1,2}  {\cal L} (\varphi_{ji}(t) )(s) Y^j(s).
\nonumber
\end{eqnarray}
Hence, we get the system
$$\left\{
\begin{array}l
s Y^1(s) - \lambda^1_0 = s {\cal L} (\varphi_{11}(t) )(s) Y^1(s) + s {\cal L} (\varphi_{21}(t) )(s) Y^2(s)
\\
s Y^2(s) - \lambda^2_0 = s {\cal L} (\varphi_{12}(t) )(s) Y^1(s) + s {\cal L} (\varphi_{22}(t) )(s) Y^2(s)
\end{array}
\right.$$
and by Cramer's rule, immediately we get, 
for $i=1,2$, $j=1,2$ and $j \neq i$,
\begin{equation*}
Y^i(s) = \frac{1}{s} \cdot
\frac{\lambda^i_0  [1-  {\cal L} (\varphi_{jj}(t) )(s)] 
+ \lambda^j_0 {\cal L} (\varphi_{ji}(t) )(s) }{[1 - {\cal L} (\varphi_{ii}(t) )(s)] [1-  {\cal L} (\varphi_{jj}(t) )(s)]
-  {\cal L} (\varphi_{ij}(t) )(s) {\cal L} (\varphi_{ji}(t) ) (s)}, 
\end{equation*}
which implies that
\begin{eqnarray}
&&
Y^i(s)  \left\{ 
1 - {\cal L} (\varphi_{ii}(t) )(s)
-  {\cal L} (\varphi_{jj}(t) )(s)
+ {\cal L} (\varphi_{ii}(t) )(s)
{\cal L} (\varphi_{jj}(t) )(s) \right.
\nonumber\\
&&\hspace{2.5 in }\left. -  {\cal L} (\varphi_{ij}(t) )(s) {\cal L} (\varphi_{ji}(t) )(s) \right\}
\nonumber\\
&&\qquad = \lambda^i_0 \frac{1}{s}
-  \lambda^i_0 \frac{1}{s} {\cal L} (\varphi_{jj}(t) )(s)
+ \lambda^j_0 \frac{1}{s} {\cal L} (\varphi_{ji}(t) )(s).
\nonumber
\end{eqnarray}
Recalling that, as well known, 
${\cal L}( f(t))(s) {\cal L} (g(t) )(s) = {\cal L} \left(\int_0^t f(t-u) g(u) \ du \right)(s)$, 
and that ${\cal L} (1)(s) = 1/s$, 
taking the inverse Laplace Transform of LHS,  we get
\begin{eqnarray}
{\cal L}^{-1}(LHS) 
&=& 
y^i_t - 
\int_0^t y^i_{t-v} \Big[
\varphi_{ii}(v) 
+ \varphi_{jj}(v) 
\nonumber\\
&&
\qquad \qquad \qquad 
+ \int_0^v 
\left[ \varphi_{ij}(v-u) ) \varphi_{ji}(u)
-\varphi_{ii}(v-u) 
\varphi_{jj}(u) 
\right] \ du \Big] \ dv.
\nonumber
\end{eqnarray}
While, taking the inverse Laplace Transform of RHS,  we get
\begin{eqnarray}
{\cal L}^{-1}(RHS) 
&=&
\lambda^i_0
-  \lambda^i_0 \int_0^t \varphi_{jj}(v) \ dv
+ \lambda^j_0 \int_0^t \varphi_{ji}(v) \ dv.
\nonumber
\end{eqnarray}
Hence,
\begin{eqnarray}
y^i_t &=& 
\int_0^t y^i_{t-v} \left[
\varphi_{ii}(v) 
+ \varphi_{jj}(v) 
+ \int_0^v 
\left[ \varphi_{ij}(v-u) ) \varphi_{ji}(u)
-\varphi_{ii}(v-u) 
\varphi_{jj}(u) 
\right] \ du \right] dv 
\nonumber\\
&& \qquad
+ \lambda^i_0 -  \lambda^i_0 \int_0^t \varphi_{jj}(v) dv
+ \lambda^j_0 \int_0^t \varphi_{ji}(v) \ dv,
\nonumber
\end{eqnarray}
by taking the derivative of both sides and given the initial condition
$y^i_0 = \lambda^i_0$
\begin{eqnarray}
(y^i_t)'
&=&
\lambda^i_0  \left[
\varphi_{ii}(t) 
+ \int_0^t 
\left[ \varphi_{ij}(t-u) ) \varphi_{ji}(u)
-\varphi_{ii}(t-u) 
\varphi_{jj}(u) 
\right] \ du \right] 
 + \lambda^j_0  \varphi_{ji}(t).
\nonumber
\end{eqnarray}
By a standard procedure to solve this Cauchy Problem, we get the required result.
\end{proof}

\subsection{
Proof of Theorem \ref{Exlambda3}}

\begin{proof}
Let $y_t = \mathbb{E}[\lambda^3(t)]$ and, 
$y_0 = \mathbb{E}[\lambda^3(0)] 
= \lambda^3_0$. 
Then  (\ref{ODExLambdaD}) becomes
$$y_t' = \int_0^t \psi'(t-u) y_u du 
+ \psi(0)  y_t.$$ 
Analogously to the proof of Theorem \ref{EBElambda}, 
taking the Laplace Transform on both sides, and setting as usual  
$Y(s) = {\cal L}(y_t)(s) = \int_0^\infty e^{-s t} y_t dt,$
for the LHS  we get 
${\cal L}((y_t)'  )(s) = s Y(s) - y_0$, 
and for the RHS  we get
$${\cal L}\left( \int_0^t \psi'(t-u) y_u du + \psi(0)  y_t \right)(s) 
= s {\cal L} (\psi(t) )(s) Y(s).$$ 
Therefore, 
$\displaystyle Y(s) \left(1 - {\cal L} (\psi(t) )(s) \right)  = y_0 \frac{1}{s}$ 
and 
taking the inverse Laplace Transform,  
we get that 
$y_t  
- {\cal L}^{-1}\left( {\cal L} \left( \int_0^t y_{t-v} \psi(v) \ dv \right)  \right) 
= y_0$. 
Hence,
$y_t  = \int_0^t y_{t-v} \psi(v) \ dv + y_0$, and 
after differentiating both sides, we get 
$\displaystyle d \mathbb{E} [ \lambda^3(t) ] = (y_t)' = y_0 \psi(t)$.
\end{proof}

\section*{Acknowledgments}
This work was partially supported by the Gruppo Nazionale per l'Analisi Matematica la Probabilit\'a e le loro Applicazioni (GNAMPA-INdAM).

\section*{Funding} Open access funding provided by University of L'Aquila within the
CRUI-CARE Agreement.

\end{document}